\title{The Picard group of the moduli space of curves with level structures}
\author{Andrew Putman\footnote{Supported in part by NSF grant DMS-1005318}}
\theoremstyle{plain}
\newtheorem{theorem}{Theorem}[section]
\newtheorem{maintheorem}{Theorem}
\newtheorem{proposition}[theorem]{Proposition}
\newtheorem{lemma}[theorem]{Lemma}
\newtheorem{corollary}[theorem]{Corollary}
\newtheorem{step}{Step}
\newcommand\BeginSteps{\setcounter{step}{0}}
\theoremstyle{definition}
\newtheorem*{definition}{Definition}
\theoremstyle{remark}
\newtheorem*{remark}{Remark}
\DeclareMathOperator{\Hom}{Hom}
\DeclareMathOperator{\Map}{Map}
\DeclareMathOperator{\Ker}{ker}
\DeclareMathOperator{\Mod}{Mod}
\DeclareMathOperator{\Torelli}{{\mathcal I}}
\DeclareMathOperator{\Sp}{Sp}
\DeclareMathOperator{\SpLie}{\mathfrak{sp}}
\DeclareMathOperator{\GL}{GL}
\DeclareMathOperator{\SL}{SL}
\DeclareMathOperator{\Siegel}{\mathcal{H}}
\DeclareMathOperator{\Teich}{{\mathcal T}}
\DeclareMathOperator{\Moduli}{{\mathcal M}}
\DeclareMathOperator{\SpinModuli}{{\mathcal S}}
\DeclareMathOperator{\C}{\mathbb{C}}
\DeclareMathOperator{\Z}{\mathbb{Z}}
\DeclareMathOperator{\Q}{\mathbb{Q}}
\DeclareMathOperator{\ClosedField}{\overline{\mathbb{F}}}
\DeclareMathOperator{\HH}{H}
\DeclareMathOperator{\Trace}{Tr}
\DeclareMathOperator{\adj}{ad}
\DeclareMathOperator{\Image}{Im}
\newcommand\Span[1]{\ensuremath{\langle #1 \rangle}}
\newcommand\CaptionSpace{\hspace{0.2in}}
\newcommand\Figure[3]{
\begin{figure}[t]
\centering
\centerline{\psfig{file=#2,scale=60}}
\caption{#3}
\label{#1}
\end{figure}}
\DeclareMathOperator{\Tor}{tor}
\DeclareMathOperator{\Tf}{tf}
\DeclareMathOperator{\ContSheaf}{\mathcal{E}}
\DeclareMathOperator{\One}{\mathbb{I}}
\DeclareMathOperator{\Zero}{\mathbb{O}}
\DeclareMathOperator{\Pic}{Pic}
\DeclareMathOperator{\PicTriv}{\text{Pic}^0}
\DeclareMathOperator{\PicTop}{\text{Pic}_{\text{top}}}
\DeclareMathOperator{\PPAV}{\mathcal{A}}
\DeclareMathOperator{\Line}{\lambda}
\DeclareMathOperator{\Hodge}{\text{h}}
\newcommand{\PPAVLine}{\ensuremath{\lambda^{\text{a}}}}
\DeclareMathOperator{\StableRank}{sr}
\newcommand\MatTwoTwo[4]{\ensuremath
\left(\begin{smallmatrix}
#1 & #2 \\ #3 & #4
\end{smallmatrix}\right)}
\begin{document}

\maketitle

\begin{abstract}
For $4 \nmid L$ and $g$ large, we calculate the integral Picard
groups of the moduli spaces of curves and principally polarized abelian varieties with level $L$
structures.  In particular, we determine the divisibility properties of the standard line
bundles over these moduli spaces and we calculate the second integral cohomology group of the level $L$ subgroup
of the mapping class group (in a previous paper, the author determined this rationally).  
This entails calculating the abelianization of the level $L$ subgroup
of the mapping class group, generalizing previous results of Perron, Sato, and the author.  Finally,
along the way we calculate the first homology group of the mod $L$ symplectic group with
coefficients in the adjoint representation.
\end{abstract}

\section{Introduction}

Let $\Moduli_g$ be the moduli space of genus $g$ Riemann surfaces (see \cite{HarrisMorrison} for a survey).  
This is a quasiprojective orbifold whose
orbifold fundamental group is the {\em mapping class group} $\Mod_g$, that is, the group of isotopy classes
of orientation preserving diffeomorphisms of a closed orientable genus $g$ topological surface $\Sigma_g$ (see 
\cite{FarbMargalitBook} for a survey).  Denote
by $\Pic(\Moduli_g)$ the {\em Picard group} of $\Moduli_g$, that is, the set of algebraic line bundles on $\Moduli_g$ (as
an orbifold; see \S \ref{section:orbifolds} for the precise definition).  This forms an abelian group under tensor
products, and in the '60's Mumford \cite{MumfordMCG} showed that the first Chern class map gives an isomorphism
$\Pic(\Moduli_g) \cong \HH^2(\Mod_g;\Z)$.  Verifying a conjecture of Mumford, Harer \cite{HarerSecond} later proved
that $\HH^2(\Mod_g;\Z) \cong \Z$ for $g$ sufficiently large.

\paragraph{Level structures.}
Associated to any finite-index subgroup of $\Mod_g$ is a finite cover of $\Moduli_g$.  An important family
of such subgroups is obtained as follows.  Fix an integer $L \geq 2$.  The
group $\Mod_g$ acts on $\HH_1(\Sigma_g;\Z/L)$ and preserves the algebraic intersection form.  This is a nondegenerate
alternating form, so we obtain a representation $\Mod_g \rightarrow \Sp_{2g}(\Z/L)$.
The {\em level $L$ subgroup} of $\Mod_g$, denoted $\Mod_g(L)$, is the kernel of this representation.  The group
$\Mod_g(L)$ thus consists of all mapping classes that act trivially on $\HH_1(\Sigma_g;\Z/L)$.
The associated finite cover of $\Moduli_g$ is the {\em moduli space of curves with level $L$ structures}, denoted 
$\Moduli_g(L)$.
This is the space of pairs $(S,B)$, where $S$ is a genus $g$ Riemann surface and $B = \{a_1,b_1,\ldots,a_g,b_g\}$
is a {\em symplectic basis} for $\HH_1(S;\Z/L)$.  By a symplectic basis we mean that $B$ forms a basis for
$\HH_1(S;\Z/L)$ and 
$$i(a_i,b_j) = \delta_{i,j} \quad \text{and} \quad i(a_i,a_j)=i(b_i,b_j)=0$$ 
for $1 \leq i,j \leq g$, where $i(\cdot,\cdot)$ is the algebraic intersection pairing.

\paragraph{Picard groups of $\Moduli_g(L)$.}
The spaces $\Moduli_g(L)$ play an important role in the study of $\Moduli_g$.  One reason for this
is that for $L \geq 3$, the spaces $\Moduli_g(L)$ are fine moduli spaces rather than merely coarse ones.  The
key issue here is that Riemann surfaces can have automorphisms but Riemann surfaces with fixed level structures
cannot (see \cite[\S 0.2]{KockVainsencher} for a nice discussion of the difference between fine and coarse
moduli spaces).  It is
thus of some interest to determine their Picard groups.  
The first step in this direction was taken by Hain \cite{HainTorelli}.
A key step in Mumford's argument demonstrating that $\Pic(\Moduli_g) \cong \HH^2(\Mod_g;\Z)$
consisted in proving that $\HH^1(\Mod_g;\Z) = 0$.  Hain showed
that we also have $\HH^1(\Mod_g(L);\Z) = 0$ for $g \geq 3$.  Using this, he deduced that there is an injection
$\Pic(\Moduli_g(L)) \hookrightarrow \HH^2(\Mod_g(L);\Z)$.  In a recent paper \cite{PutmanSecond}, 
the author proved that $\HH^2(\Mod_g(L);\Z)$ has rank $1$ for $g \geq 5$.  

Our first main result is as follows.
For technical reasons, we will make the assumption that $4 \nmid L$, though we do not believe that
this hypothesis is necessary for our theorems.  See the end of the introduction for more details about this.

\begin{maintheorem}
\label{theorem:mg}
For $g \geq 5$ and $L \geq 2$ such that $4 \nmid L$, we have
$\Pic(\Moduli_g(L)) \cong \HH^2(\Mod_g(L);\Z)$.
\end{maintheorem}

\noindent
As a consequence of this and the aforementioned result of Hain asserting that
$\HH_1(\Mod_g(L);\Z)$ consists entirely of torsion, we have a short exact sequence
\begin{equation}
\label{eqn:mgpicexseq}
0 \longrightarrow \Hom(\HH_1(\Mod_{g}(L);\Z),\C^{\ast}) \longrightarrow \Pic(\Moduli_g(L)) \longrightarrow \Z \longrightarrow 0
\end{equation}
coming from the universal coefficients exact sequence for $\HH^2(\Mod_g(L);\Z)$.
The bulk of this paper is devoted to answering the following two
questions suggested by this calculation.
\begin{enumerate}
\item What is $\HH_1(\Mod_g(L);\Z)$?  For our answer, see Theorem \ref{theorem:modlabel1} below.  We remark
that determining this group is Problem 5.23 in \cite{FarbProblems}.  Combined with the author's
aforementioned computation of the rank of $\HH_2(\Mod_g(L);\Z)$, this gives a complete calculation
of $\HH^2(\Mod_g(L);\Z)$.
\item What specific line bundles give the various terms in \eqref{eqn:mgpicexseq}?  For our answer, see
Theorem \ref{theorem:mzgen} below.
\end{enumerate}

\paragraph{The standard line bundle on moduli space.}
We begin with the second question.  The elements of the torsion subgroup $\Hom(\HH_1(\Mod_{g}(L);\Z),\C^{\ast})$ come
from flat line bundles (see \S \ref{section:orbifolds}), so we focus on the $\Z$ term.
The first step is to describe a generator for $\Pic(\Moduli_g) \cong \Z$.  Let $\Hodge_g$ denote the
{\em Hodge bundle}.  This is the complex vector bundle over $\Moduli_g$ whose fiber
over the Riemann surface $S$ is the vector space of holomorphic $1$-forms on $S$.  By Riemann-Roch, $\Hodge_g$
is $g$-dimensional, so we have a complex line bundle $\Line_g = \wedge^g \Hodge_g$.  Arbarello-Cornalba
\cite{ArbarelloCornalba} showed that $\Pic(\Moduli_g) = \Span{\Line_g}$.

\paragraph{Divisibility.}
Let $\Line_g(L) \in \Pic(\Moduli_g(L))$ be the pullback of $\Line_g$.  It was recently shown
by G.\ Farkas \cite{Farkas} that for $L$ even, the line bundle $\Line_g(L)$ has a fourth root
modulo torsion.  More precisely, there exists some $\theta_g(L) \in \Pic(\Moduli_g(L))$
such that $4 \theta_g(L)-\Line_g(L)$ is torsion
(see the remark after Theorem \ref{theorem:farkas} below for how to extract this from Farkas's paper).  Our
next result shows that even modulo torsion, this is the best that one can do when $4 \nmid L$. 

\begin{maintheorem}[{Divisibility of $\Line_g(L)$}]
\label{theorem:mzgen}
For $g \geq 5$ and $L \geq 2$ such that $4 \nmid L$, the group
$\Pic(\Moduli_g(L))$ is generated modulo torsion by $\Line_g(L)$ if $L$ is odd and by
$\theta_g(L)$ if $L$ is even.
\end{maintheorem}

Our proof of Theorem \ref{theorem:mzgen} is topological.  The key result ends up being the following theorem.

\begin{maintheorem}[{Image of second homology group of $\Mod_g(L)$}]
\label{theorem:mh2}
For $g \geq 5$ and $L \geq 2$ such that $4 \nmid L$, the image of $\HH_2(\Mod_{g}(L);\Z)$ in
$\HH_2(\Mod_{g};\Z) \cong \Z$ is $n \Z$, where $n=1$ if $L$ is odd
and $n=4$ if $L$ is even.
\end{maintheorem}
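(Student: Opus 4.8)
The plan is to run the Lyndon--Hochschild--Serre spectral sequence for the extension
$$1 \longrightarrow \Mod_g(L) \longrightarrow \Mod_g \longrightarrow \Sp_{2g}(\Z/L) \longrightarrow 1,$$
writing $Q = \Sp_{2g}(\Z/L)$ and $N = \Mod_g(L)$, so that $E^2_{p,q} = \HH_p(Q;\HH_q(N;\Z))$ converges to $\HH_{p+q}(\Mod_g;\Z)$. The image of the inclusion-induced map $\HH_2(N;\Z) \to \HH_2(\Mod_g;\Z) \cong \Z$ is exactly the bottom filtration piece $E^\infty_{0,2}$, so the integer $n$ I must compute is the order of the cokernel, which is filtered with graded pieces $E^\infty_{1,1}$ and $E^\infty_{2,0}$; since $Q$ is finite all of these groups are finite (recall that $\HH_1(N;\Z)$ is itself finite, as $\HH^1(\Mod_g(L);\Z)=0$ by Hain), so in particular $n = |E^\infty_{1,1}| \cdot |E^\infty_{2,0}|$ and the map is automatically of finite index. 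Here $E^\infty_{2,0}$ is the kernel of $d_2 \colon \HH_2(Q;\Z) \to (\HH_1(N;\Z))_Q$, while $E^\infty_{1,1}$ is the quotient of $\HH_1(Q;\HH_1(N;\Z))$ by the image of $d_2 \colon \HH_3(Q;\Z) \to \HH_1(Q;\HH_1(N;\Z))$. The whole theorem thereby reduces to computing these two finite groups.

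First I would feed in Theorem \ref{theorem:modlabel1}, which describes $\HH_1(N;\Z) = \HH_1(\Mod_g(L);\Z)$ together with its $Q$-module structure. The point is that it splits (up to $2$-torsion that I must track) into a ``Johnson summand'' built from $\wedge^3$ of $\HH_1(\Sigma_g;\Z/L)$ and a ``symplectic summand'' isomorphic to the adjoint representation $\SpLie_{2g}(\Z/L)$, the latter arising from the abelianization of the congruence subgroup $\Sp_{2g}(\Z,L)$ via the extension $1 \to \Torelli_g \to \Mod_g(L) \to \Sp_{2g}(\Z,L) \to 1$. Next I would invoke homological stability for $Q = \Sp_{2g}(\Z/L)$, with twisted coefficients, to pass into a range (valid for $g \geq 5$) where $\HH_2(Q;\Z)$, $\HH_3(Q;\Z)$, and the twisted groups $\HH_*(Q;\SpLie_{2g}(\Z/L))$ and $\HH_*(Q;\wedge^3)$ are all independent of $g$, reducing every term above to a single fixed computation.

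The heart of the argument---and the step I expect to be the main obstacle---is the explicit determination of the twisted first homology $\HH_1(\Sp_{2g}(\Z/L);\SpLie_{2g}(\Z/L))$ (the computation advertised in the abstract) together with the Schur multiplier $\HH_2(\Sp_{2g}(\Z/L);\Z)$. I expect the decisive feature to be purely $2$-local, accounting exactly for the even/odd dichotomy: when $L$ is odd these contributions vanish in the relevant range, forcing both $E^\infty_{1,1}$ and $E^\infty_{2,0}$ to be trivial and hence $n=1$; when $L$ is even each contributes a factor of $2$---one of order $2$ coming from the adjoint homology and one from the $2$-torsion in $\HH_2(\Sp_{2g}(\Z/L);\Z)$ (the metaplectic/theta obstruction)---and these assemble to $n=4$. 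The hypothesis $4 \nmid L$ enters precisely here: it guarantees the clean identification of the symplectic summand of $\HH_1(N;\Z)$ with $\SpLie_{2g}(\Z/L)$ and keeps the $2$-primary part of both the congruence-subgroup and the finite-group computations under control.

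Finally I would assemble the pieces: compute the two relevant $d_2$ differentials and verify that they do not disturb the counts established above (equivalently, that no cancellation occurs between the Johnson and symplectic summands), check that the extension
$$0 \longrightarrow E^\infty_{1,1} \longrightarrow \Z/n\Z \longrightarrow E^\infty_{2,0} \longrightarrow 0$$
has order the product of the two orders, and conclude that $n = 1$ for $L$ odd and $n = 4$ for $L$ even. As a consistency check, the $L$ even case recovers the divisibility of $\Line_g(L)$ by $4$ due to G.\ Farkas, now shown to be sharp modulo torsion.
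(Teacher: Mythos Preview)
Your plan has a circularity and a factual error that together derail it.  The circularity: you propose to ``feed in Theorem \ref{theorem:modlabel1}'' to obtain $\HH_1(\Mod_g(L);\Z)$ as a $Q$-module, but in the paper that theorem (in its closed-surface form, Theorem \ref{theorem:modlabel}) is deduced \emph{from} the computation carried out in the proof of Theorem \ref{theorem:mh2}.  The precise point is that, a priori, one knows only that $K_g$ is either $\overline{B}_2(2g) \oplus (\wedge^3 H_L)/H_L$ or $(\overline{B}_2(2g)/\overline{B}_0(2g)) \oplus (\wedge^3 H_L)/H_L$; it is exactly the conclusion $n=4$ that forces the $\overline{B}_0(2g) \cong \Z/2$ to die.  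So you cannot assume the description of $\HH_1(N;\Z)$ you want.

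The factual error: you expect one factor of $2$ to come from ``the $2$-torsion in $\HH_2(\Sp_{2g}(\Z/L);\Z)$'', but Theorem \ref{theorem:stein} says this group vanishes for all $L$ with $4 \nmid L$, including the even ones.  Hence $E^2_{2,0}=0$, so $E^\infty_{2,0}=0$, and in your spectral sequence the \emph{entire} cokernel would have to be $E^\infty_{1,1}$ of order $4$, not $2$.  You give no mechanism for this.

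The paper instead passes through the extension $1 \to \Torelli_g \to \Mod_g(L) \to \Sp_{2g}(\Z,L) \to 1$ and the already-established Theorem \ref{theorem:splh2}.  The five-term sequence together with Lemma \ref{lemma:extendtorelli} shows only that the image is $2\Z$ or $4\Z$ when $L$ is even; what pins it down to $4\Z$ is Farkas's geometric result (Theorem \ref{theorem:farkas}) that $\Line_g(L)$ is divisible by $4$, fed through the squeezing Lemma \ref{lemma:maintool}.  You treat Farkas as a consistency check, but it is an essential input---no purely group-cohomological computation in the paper distinguishes $n=4$ from $n=2$.
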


\paragraph{Principally polarized abelian varieties.}
Let $\PPAV_g$ be the moduli space of principally polarized abelian varieties over $\C$ 
of dimension $g$ (see \S \ref{section:ppav}).  
This is a quasiprojective
orbifold with orbifold fundamental group $\Sp_{2g}(\Z)$, and there is a map $\Moduli_g \rightarrow \PPAV_g$
that takes a Riemann surface to its Jacobian.  At the level of fundamental groups, this corresponds
to the homomorphism $\Mod_g \rightarrow \Sp_{2g}(\Z)$ coming from the action of $\Mod_g$ on $\HH_1(\Sigma_g;\Z)$.
It is known that that the map $\Moduli_g \rightarrow \PPAV_g$ induces an isomorphism
$\Pic(\PPAV_g) \rightarrow \Pic(\Moduli_g)$ (see, e.g., \cite[Corollary 17.4]{HainModuliTran}).  
To prove Theorems \ref{theorem:mzgen} and \ref{theorem:mh2}, we must prove
analogous results for appropriate covers of $\PPAV_g$.

The {\em level $L$ subgroup of $\Sp_{2g}(\Z)$}, denoted $\Sp_{2g}(\Z,L)$, is the kernel of the map
$\Sp_{2g}(\Z) \rightarrow \Sp_{2g}(\Z/L)$.  Associated to $\Sp_{2g}(\Z,L)$ is the cover $\PPAV_g(L)$
of $\PPAV_g$, known as the {\em moduli space of principally polarized abelian varieties with level $L$ structures}.
Work of Kazhdan \cite{KazhdanT} shows that $\HH^1(\Sp_{2g}(\Z,L);\Z) = 0$ for $g \geq 2$.  Just
like for $\Moduli_g(L)$, this implies that there is an injection 
$\Pic(\PPAV_g(L)) \hookrightarrow \HH^2(\Sp_{2g}(\Z,L);\Z)$.  
Borel \cite{BorelStability1,BorelStability2} proved that $\HH^2(\Sp_{2g}(\Z,L);\Z)$ 
has rank $1$ for $g \geq 3$.  We prove the following.

\begin{maintheorem}
\label{theorem:ppav}
For $g \geq 4$ and $L \geq 2$ such that $4 \nmid L$, we have $\Pic(\PPAV_g(L)) \cong \HH^2(\Sp_{2g}(\Z,L);\Z)$.
\end{maintheorem}

\noindent
As a consequence of this and the aforementioned theorem of Borel asserting that
$\HH_1(\Sp_{2g}(\Z,L);\Z)$ consists entirely of torsion, we have an exact sequence
\begin{equation}
\label{eqn:ppavexseq}
0 \longrightarrow \Hom(\HH_1(\Sp_{2g}(\Z,L);\Z),\C^{\ast}) \longrightarrow \Pic(\PPAV_g(L)) \longrightarrow \Z \longrightarrow 0
\end{equation}
coming from the universal coefficients theorem for $\HH^2(\Sp_{2g}(\Z,L);\Z)$.

\begin{remark}
The group $\HH_1(\Sp_{2g}(\Z,L);\Z)$ has been calculated by Sato \cite{Sato} for $g \geq 3$
(for odd $L$, this was also determined independently 
by Perron \cite{Perron} and the author \cite{PutmanAbel}).  For $L$ odd, we have
$\HH_1(\Sp_{2g}(\Z,L);\Z) \cong \SpLie_{2g}(\Z/L)$ (see below for the definition
of the Lie algebra $\SpLie_{2g}(\Z/L)$), while for $L$ even,
$\HH_1(\Sp_{2g}(\Z,L);\Z)$ is an extension of $\SpLie_{2g}(\Z/L)$ by $\HH_1(\Sigma_g;\Z/2)$.
\end{remark}

\paragraph{Siegel modular forms.}
There is a line bundle $\PPAVLine_g \in \Pic(\PPAV_g)$ whose holomorphic sections are Siegel modular forms of 
weight $1$ and level $1$.  It is known that $\Pic(\PPAV_g) = \Span{\PPAVLine_g}$ and that 
$\PPAVLine_g$ pulls back to $\Line_g \in \Pic(\Moduli_g)$ (for instance, this follows from
\cite[\S 17]{HainModuliTran}).  
Let $\PPAVLine_g(L) \in \Pic(\PPAV_g(L))$ be the pullback of $\PPAVLine_g$.  For $n \in \Z$, holomorphic 
sections of $n \cdot \PPAVLine_g(L)$ are the Siegel modular forms of weight $n$ and level $L$.  For $L$ even, the classical
theta-nulls are Siegel modular forms of weight $1/2$ and level $L$ in an appropriate sense, so for $L$ even
we can divide $\PPAVLine_g(L)$ by $2$.  The following theorem says that if $4 \nmid L$, then even modulo torsion
this is the best one can do.

\begin{maintheorem}[{Divisibility of $\PPAVLine_g(L)$}]
\label{theorem:ppavzgen}
For $g \geq 4$ and $L \geq 2$ such that $4 \nmid L$, the group
$\Pic(\PPAV_g(L))$ is generated modulo torsion by $\frac{1}{n} \PPAVLine_g(L)$, where
$n = 1$ if $L$ is odd and $n=2$ if $L$ is even.
\end{maintheorem}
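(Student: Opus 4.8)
The plan is to reduce Theorem~\ref{theorem:ppavzgen} to a statement about integral homology and then to analyze that statement via the Lyndon--Hochschild--Serre spectral sequence of the congruence extension
\[
1 \longrightarrow \Sp_{2g}(\Z,L) \longrightarrow \Sp_{2g}(\Z) \longrightarrow \Sp_{2g}(\Z/L) \longrightarrow 1 .
\]
Just as Theorem~\ref{theorem:mh2} is the homological shadow of Theorem~\ref{theorem:mzgen}, the sequence \eqref{eqn:ppavexseq}, combined with the facts that $c_1$ identifies $\Pic(\PPAV_g)$ with $\HH^2(\Sp_{2g}(\Z);\Z)\cong\Z$ on the generator $c_1(\PPAVLine_g)$ and that $\HH_2(\Sp_{2g}(\Z);\Z)\cong\Z$ is torsion-free and pairs perfectly with $\HH^2$, shows through the universal coefficients theorem and naturality of $c_1$ that the image of $\PPAVLine_g(L)$ in $\Pic(\PPAV_g(L))/\Torsion \cong \Z$ records exactly the index of the image of the natural map
\[
\iota_\ast \colon \HH_2(\Sp_{2g}(\Z,L);\Z) \longrightarrow \HH_2(\Sp_{2g}(\Z);\Z) \cong \Z .
\]
It therefore suffices to prove that $\Image(\iota_\ast) = n\,\Z$, with $n=1$ for $L$ odd and $n=2$ for $L$ even; this is the precise $\PPAV_g$ analogue of Theorem~\ref{theorem:mh2}, with the factor $2$ in place of $4$.

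The containment $\Image(\iota_\ast) \subseteq n\,\Z$ I would handle directly. For $L$ odd there is nothing to do. For $L$ even the classical theta-null construction furnishes a weight $\tfrac{1}{2}$ form, that is, a line bundle $\mu \in \Pic(\PPAV_g(L))$ with $2\mu \equiv \PPAVLine_g(L)$ modulo torsion; dualizing, every class in $\Image(\iota_\ast)$ must pair evenly with the generator $c_1(\PPAVLine_g)$, so $\Image(\iota_\ast) \subseteq 2\,\Z$. The real content is the reverse containment $n\,\Z \subseteq \Image(\iota_\ast)$, equivalently that the cokernel of $\iota_\ast$ has order exactly $n$.

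To compute this cokernel I would run the spectral sequence of the extension above. Since $\Image(\iota_\ast)$ is the bottom filtration piece $E^\infty_{0,2}$ of $\HH_2(\Sp_{2g}(\Z);\Z)$, its cokernel is an extension of $E^\infty_{2,0}$ by $E^\infty_{1,1}$, and these are controlled by $\HH_2(\Sp_{2g}(\Z/L);\Z)$ and by the adjoint homology $\HH_1(\Sp_{2g}(\Z/L);\SpLie_{2g}(\Z/L))$, respectively. The two inputs I would assemble are: the identification of the abelianization $\HH_1(\Sp_{2g}(\Z,L);\Z)$ as an $\Sp_{2g}(\Z/L)$-module with a version of the adjoint representation $\SpLie_{2g}(\Z/L)$ (this is where Sato's computation and the hypothesis $4 \nmid L$ enter), so that $E^2_{1,1} = \HH_1(\Sp_{2g}(\Z/L);\SpLie_{2g}(\Z/L))$; and the value of $\HH_2(\Sp_{2g}(\Z/L);\Z)$. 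After computing these finite groups I would track the differentials $d_2$ and $d_3$ impinging on the $(1,1)$ and $(2,0)$ entries to cut $E^2_{1,1}$ and $E^2_{2,0}$ down to their $E^\infty$ values, and read off that the total order of the cokernel is $1$ for $L$ odd and $2$ for $L$ even.

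I expect the main obstacle to be the computation of the adjoint-representation homology $\HH_1(\Sp_{2g}(\Z/L);\SpLie_{2g}(\Z/L))$, together with the precise bookkeeping of its $2$-primary part: this is exactly where the distinction between $n=2$ and the a priori possibility $n=4$ resides and where the hypothesis $4 \nmid L$ is essential. Controlling the spectral-sequence differentials sharply enough to pin the cokernel to order exactly $n$, rather than merely bounding it, is the crux of the argument.
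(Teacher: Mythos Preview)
Your approach is essentially the paper's, but you have slightly mislocated the source of the crucial $\Z/2$.  The paper applies Lemma~\ref{lemma:maintool} exactly as you describe: divisibility of $\PPAVLine_g(L)$ by $n$ comes from the theta-nulls (Lemma~\ref{lemma:thetanulls}), and the bound on the cokernel of $\iota_\ast$ comes from the Hochschild--Serre spectral sequence of the congruence extension.  Two points sharpen your sketch.  First, Stein's Theorem~\ref{theorem:stein} gives $\HH_2(\Sp_{2g}(\Z/L);\Z)=0$ outright (this is one of the places $4\nmid L$ enters), so $E^2_{2,0}=0$ and no differentials need to be tracked: the cokernel of $\iota_\ast$ is simply a quotient of $E^2_{1,1}=\HH_1(\Sp_{2g}(\Z/L);\HH_1(\Sp_{2g}(\Z,L);\Z))$.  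Second, and more importantly, the adjoint homology $\HH_1(\Sp_{2g}(\Z/L);\SpLie_{2g}(\Z/L))$ is \emph{zero} for all $L$ with $4\nmid L$ (Theorem~\ref{theorem:adjointcoho}); the $\Z/2$ you are looking for in the even case does not live there.  It comes instead from Sato's short exact sequence (Corollary~\ref{corollary:sato})
\[
0 \longrightarrow \HH_1(\Sigma_g;\Z/2) \longrightarrow \HH_1(\Sp_{2g}(\Z,L);\Z) \longrightarrow \SpLie_{2g}(\Z/L) \longrightarrow 0,
\]
together with the computation $\HH_1(\Sp_{2g}(\Z/L);\HH_1(\Sigma_g;\Z/2))\cong\Z/2$ (Theorem~\ref{theorem:hcoho}).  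The long exact sequence in $\Sp_{2g}(\Z/L)$-homology then bounds $E^2_{1,1}$ by $\Z/2$, which is exactly the bound $m\le n$ you need.  So the ``$2$-primary bookkeeping'' is not in the adjoint piece at all but in the extra $\HH_1(\Sigma_g;\Z/2)$ summand that appears only for even $L$.
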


\begin{remark}
Work of Deligne \cite{Deligne} shows that $\PPAVLine_g(L)$ is never divisible by more than $2$.  However, one
cannot deduce from Deligne's work that $\PPAVLine_g(L)$ is not divisible by more than $2$ modulo torsion.
\end{remark}

\begin{remark}
It follows from work of Howe and Weissauer that all Siegel modular forms of sufficiently small weight come from theta
series (see \cite{FreitagSingular} for a precise formulation and history of this result).  From this, it follows 
that for $L$ odd
there do not exist Siegel modular forms of weight $1/2$ and level $L$.  One can view Theorem 
\ref{theorem:ppavzgen} as showing that this remains true even if we do not demand that our modular forms be holomorphic.
\end{remark}

Just like for $\Moduli_g$, our proof of Theorem \ref{theorem:ppavzgen} is topological and the key
step is to prove the following theorem.

\begin{maintheorem}[{Image of second homology group of $\Sp_{2g}(\Z,L)$}]
\label{theorem:splh2}
For $g \geq 4$ and $L \geq 2$ such that $4 \nmid L$, the image of $\HH_2(\Sp_{2g}(\Z,L);\Z)$ 
in $\HH_2(\Sp_{2g}(\Z);\Z) \cong \Z$ is $n \Z$, where $n=1$ if $L$ is odd
and $n=2$ if $L$ is even.
\end{maintheorem}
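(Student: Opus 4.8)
The plan is to compute the cokernel of the restriction map
$r_\ast\colon \HH_2(\Sp_{2g}(\Z,L);\Z) \to \HH_2(\Sp_{2g}(\Z);\Z) \cong \Z$
and to show that it has order $n$. Borel's stability theorem identifies $\HH_2(\Sp_{2g}(\Z,L);\Q)$ with $\HH_2(\Sp_{2g}(\Z);\Q) = \Q$, and the generating class (the first Chern class of $\PPAVLine_g$) restricts nontrivially, so $r_\ast$ is rationally surjective. Hence its image is $n\Z$ for some finite $n \geq 1$, and the whole problem reduces to pinning down this index.

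First I would feed the extension
\begin{equation*}
1 \longrightarrow \Sp_{2g}(\Z,L) \longrightarrow \Sp_{2g}(\Z) \longrightarrow \Sp_{2g}(\Z/L) \longrightarrow 1
\end{equation*}
into the Lyndon--Hochschild--Serre spectral sequence $E^2_{p,q} = \HH_p(\Sp_{2g}(\Z/L);\HH_q(\Sp_{2g}(\Z,L);\Z)) \Rightarrow \HH_{p+q}(\Sp_{2g}(\Z);\Z)$. The image of $r_\ast$ is exactly the bottom filtration piece $E^\infty_{0,2}$, so the cokernel is an iterated extension of $E^\infty_{2,0}$ by $E^\infty_{1,1}$, giving $n = |E^\infty_{2,0}| \cdot |E^\infty_{1,1}|$. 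Using that both $\Sp_{2g}(\Z)$ and $\Sp_{2g}(\Z/L)$ are perfect for $g \geq 4$, the five-term exact sequence identifies $E^\infty_{2,0}$ with the image of $\HH_2(\Sp_{2g}(\Z);\Z)$ in $\HH_2(\Sp_{2g}(\Z/L);\Z)$, while $E^\infty_{1,1}$ is a quotient of $\HH_1(\Sp_{2g}(\Z/L);\HH_1(\Sp_{2g}(\Z,L);\Z))$ by the image of $d_2\colon \HH_3(\Sp_{2g}(\Z/L);\Z) \to E^2_{1,1}$. Computing $n$ has thus become a problem about the low-degree homology of the \emph{finite} group $\Sp_{2g}(\Z/L)$.

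The crucial input is an identification of $\HH_1(\Sp_{2g}(\Z,L);\Z)$ as a $\Sp_{2g}(\Z/L)$-module. The congruence homomorphism $M \mapsto (M-I)/L \bmod L$ and Sato's computation identify this abelianization with the adjoint representation $\SpLie_{2g}(\Z/L)$ when $L$ is odd, with an extra $\Z/2$ summand present when $L$ is even. Substituting this in, the dominant contribution to $E^\infty_{1,1}$ is $\HH_1(\Sp_{2g}(\Z/L);\SpLie_{2g}(\Z/L))$, precisely the adjoint-coefficient homology group advertised in the abstract, which I would compute directly. I would then analyze the situation one prime at a time using the Chinese remainder decomposition $\Sp_{2g}(\Z/L) = \prod_i \Sp_{2g}(\Z/p_i^{k_i})$: at odd primes the relevant homology vanishes stably, so $E^\infty_{2,0}$ and $E^\infty_{1,1}$ contribute nothing and $n=1$ for $L$ odd; the prime $2$ occurs exactly when $L$ is even, and because $4 \nmid L$ it appears only as the single factor $\Sp_{2g}(\Field_2)$, contributing a single $\Z/2$ coming from $\SpLie_{2g}(\Field_2)$ together with the extra $\Z/2$ in the module structure.

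The main obstacle is exactly this $2$-local analysis: computing $\HH_1(\Sp_{2g}(\Z/L);\SpLie_{2g}(\Z/L))$ and controlling $\HH_2$ and $\HH_3$ of $\Sp_{2g}(\Z/L)$ integrally at the prime $2$, including the transgressive $d_2$ out of $\HH_3$. This is where the entire even/odd dichotomy lives and where the hypothesis $4 \nmid L$ does its work, ensuring that the $2$-part involves only $\Sp_{2g}(\Field_2)$ and $\SpLie_{2g}(\Field_2)$ rather than more delicate higher $2$-power phenomena. The spectral sequence computation yields the upper bound $n \leq 2$ for $L$ even; to confirm that this bound is achieved I would exhibit the class geometrically, using the classical theta-null, which supplies a square root of $\PPAVLine_g(L)$ for $L$ even and so shows $c_1(\PPAVLine_g(L))$ is divisible by $2$ modulo torsion, forcing $2 \mid n$ and hence $n=2$.
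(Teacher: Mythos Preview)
Your overall plan---the Hochschild--Serre spectral sequence for $\Sp_{2g}(\Z,L) \lhd \Sp_{2g}(\Z)$, bounding $E^\infty_{1,1}$ via the adjoint-coefficient homology, and obtaining the lower bound for even $L$ from theta-nulls---is exactly the paper's approach. However, three details need correction.

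First, for $L$ even, Sato's extra piece in $\HH_1(\Sp_{2g}(\Z,L);\Z)$ is not a single $\Z/2$ but the full standard representation $\HH_1(\Sigma_g;\Z/2) \cong (\Z/2)^{2g}$, and it sits in a (generally non-split) extension rather than as a summand (Corollary~\ref{corollary:sato}). Second, and relatedly, the $\Z/2$ bounding $E^\infty_{1,1}$ does \emph{not} come from $\SpLie_{2g}(\Field_2)$: Theorem~\ref{theorem:adjointcoho} asserts $\HH_1(\Sp_{2g}(\Z/L);\SpLie_{2g}(\Z/L)) = 0$ including at $p=2$. The $\Z/2$ instead arises from the $\HH_1(\Sigma_g;\Z/2)$ kernel via Pollatsek's computation (Theorem~\ref{theorem:hcoho}), which gives $\HH_1(\Sp_{2g}(\Z/L);\HH_1(\Sigma_g;\Z/2)) \cong \Z/2$; the long exact sequence in $\Sp_{2g}(\Z/L)$-homology associated to Corollary~\ref{corollary:sato} then shows $E^2_{1,1}$ is at most $\Z/2$. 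Third, you do not need to analyze $\HH_2$ or $\HH_3$ of $\Sp_{2g}(\Z/L)$ separately: Stein's theorem (Theorem~\ref{theorem:stein}) gives $\HH_2(\Sp_{2g}(\Z/L);\Z) = 0$ outright for $4 \nmid L$, so $E^2_{2,0} = 0$, and the $d^2$ differential from $E^2_{3,0}$ is irrelevant once $E^2_{1,1}$ itself is bounded by $\Z/2$.
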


\paragraph{The adjoint representation of the symplectic group.}
A key technical result that goes into proving Theorem \ref{theorem:splh2}, which we believe
to be of independent interest, is as follows.  We first introduce some notation.
Denoting the $n \times n$ zero matrix by $\Zero_n$ and the $n \times n$
identity matrix by $\One_n$, let $\Omega_g$ be the matrix $\MatTwoTwo{\Zero_g}{\One_g}{-\One_g}{\Zero_g}$.
By definition, for a commutative ring $R$, the group $\Sp_{2g}(R)$ consists of $2g \times 2g$ matrices $X$ with entries in
$R$ that satisfy $X^t \Omega_{g} X = \Omega_g$.  We will denote by $\SpLie_{2g}(R)$ the additive group of all
$2g \times 2g$ matrices $A$ with entries in $R$ that satisfy $A^t \Omega_{g} + \Omega_g A = 0$.  It is easy to see
that $\Sp_{2g}(R)$ acts on $\SpLie_{2g}(R)$ by conjugation.

\begin{maintheorem}
\label{theorem:adjointcoho}
For $g \geq 3$ and $L \geq 2$ such that $4 \nmid L$, we have
$\HH_1(\Sp_{2g}(\Z/L);\SpLie_{2g}(\Z/L)) = 0$.
\end{maintheorem}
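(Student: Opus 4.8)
The plan is to reduce, via the Chinese Remainder Theorem, to the case where $L$ is a prime power, and then to exploit the structure of $\SpLie_{2g}$ as a representation. Write $L = \prod_i \ell_i$ with the $\ell_i = p_i^{e_i}$ pairwise coprime prime powers and $e_i = 1$ whenever $p_i = 2$ (this is exactly the force of the hypothesis $4 \nmid L$). Then $\Sp_{2g}(\Z/L) \cong \prod_i \Sp_{2g}(\Z/\ell_i)$, and there is a compatible decomposition $\SpLie_{2g}(\Z/L) \cong \bigoplus_i \SpLie_{2g}(\Z/\ell_i)$ of $\Sp_{2g}(\Z/L)$-modules, where the $i$-th summand is acted on through the projection to the $i$-th factor. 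Feeding this into the K\"unneth theorem for group homology produces the diagonal terms $\HH_1(\Sp_{2g}(\Z/\ell_i); \SpLie_{2g}(\Z/\ell_i))$ together with cross terms, each of which involves the homology $\HH_{\geq 1}(-;\Z)$ of a nonempty sub-product of the $\Sp_{2g}(\Z/\ell_j)$. First I would observe that every such sub-product is perfect: $\Sp_{2g}(\Z)$ is perfect for $g \geq 3$ and surjects onto each $\Sp_{2g}(\Z/\ell_j)$, so each factor is perfect and the relevant $\HH_1(-;\Z)$ abelianizations vanish. Hence all cross terms die and it suffices to treat a single prime power $\ell = p^e$ with $e = 1$ if $p = 2$.

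For the prime-power case I would first pin down the module. Over any commutative ring $R$ the assignment $A \mapsto \Omega_g A$ identifies $\SpLie_{2g}(R)$ with the space of symmetric $2g \times 2g$ matrices, so as an $\Sp_{2g}(R)$-representation $\SpLie_{2g}(R) \cong \text{Sym}^2 V$, where $V = R^{2g}$ is the standard representation. To pass from $\Z/p^e$ to $\Field_p$ coefficients I would filter the module by the submodules $p^i \SpLie_{2g}(\Z/p^e)$; each subquotient is isomorphic to $\SpLie_{2g}(\Field_p)$, with $\Sp_{2g}(\Z/p^e)$ acting through its reduction $\Sp_{2g}(\Field_p)$. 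By the long exact sequences in homology (d\'evissage up this filtration), it is enough to prove $\HH_1(\Sp_{2g}(\Z/p^e); \SpLie_{2g}(\Field_p)) = 0$; that is, I may take the coefficients to be the characteristic-$p$ symmetric square even though the group remains the full congruence quotient.

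For odd $p$ the plan is to compute this by induction on $g$, using the action of $\Sp_{2g}(\Z/p^e)$ on a highly connected simplicial complex built from the hyperbolic planes in $V$ (a symplectic analogue of the complexes used in twisted homological stability). The isotropy spectral sequence for this action expresses $\HH_1(\Sp_{2g}(\Z/p^e); \text{Sym}^2 V)$ in terms of homologies of stabilizers, each of which contains an $\SL_2$-factor acting on a fixed hyperbolic plane $U$ with orthogonal complement $W = U^\perp$. Splitting $\text{Sym}^2 V = \text{Sym}^2 W \oplus (W \otimes U) \oplus \text{Sym}^2 U$ as a module over such a stabilizer, the element $-\One_U$ (which is $-1$ on $U$ and $+1$ on $W$) is central in the stabilizer and acts by $-1$ on the mixed summand $W \otimes U$; since $2$ is invertible in $\Field_p$, a center-kills argument annihilates the contribution of $W \otimes U$. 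This leaves $\text{Sym}^2 W$, which feeds the inductive hypothesis at genus $g-1$, and $\text{Sym}^2 U$, which is governed by a direct $\SL_2(\Z/p^e)$ computation. Chasing these vanishing inputs through the spectral sequence should yield the claim for all $g \geq 3$ once the base case at small genus is verified.

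The hard part will be the prime $2$. When $p = 2$ the hypothesis $4 \nmid L$ forces $e = 1$, so only $\HH_1(\Sp_{2g}(\Field_2); \SpLie_{2g}(\Field_2)) = 0$ is needed; but here the center-kills step collapses, since $-\One_U = \One_U$ in characteristic $2$, and, worse, $\text{Sym}^2 V$ ceases to be semisimple: the alternating matrices sit inside the symmetric ones, giving a nonsplit extension relating $\wedge^2 V$ and a Frobenius-twisted copy of $V$. I therefore expect the main obstacle to be a self-contained characteristic-$2$ analysis: one must untangle this filtration of $\SpLie_{2g}(\Field_2)$ and compute $\HH_1(\Sp_{2g}(\Field_2); -)$ on each graded piece, presumably by combining the vanishing of $\HH_1$ of $\Sp_{2g}(\Field_2)$ with small coefficients (using that $\Sp_{2g}(\Field_2)$ is perfect for $g \geq 3$) with an explicit generators-and-relations check in low genus. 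This char-$2$ non-semisimplicity is precisely what places the $4 \mid L$ case beyond the present method.
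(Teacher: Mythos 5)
Your outer reductions track the paper's: the CRT/K\"unneth step using perfectness of the factors is exactly the paper's use of Lemma \ref{lemma:killchunk} (Step 3 of its proof), and your filtration of $\SpLie_{2g}(\Z/p^e)$ with graded pieces $\SpLie_{2g}(\Z/p)$ is its Step 4. The genuine gap is in the prime-power case with $e \geq 2$. After your d\'evissage the group is still the full congruence quotient $\Sp_{2g}(\Z/p^e)$, and the congruence kernel $K = \Ker(\Sp_{2g}(\Z/p^e) \to \Sp_{2g}(\Z/p))$ contributes a term to the five-term exact sequence that does not vanish for formal reasons: $K$ acts trivially on $M = \SpLie_{2g}(\Z/p)$, its abelianization surjects onto $\SpLie_{2g}(\Z/p)$, and hence $(\HH_1(K;M))_{\Sp_{2g}(\Z/p)}$ surjects onto $(\SpLie_{2g}(\Z/p) \otimes \SpLie_{2g}(\Z/p))_{\Sp_{2g}(\Z/p)}$, which is cyclic of order $p$ by Proposition \ref{proposition:spliehomo} and Lemma \ref{lemma:selfdual}. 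So even granting the vanishing of $\HH_1(\Sp_{2g}(\Z/p);M)$, there is a potential $\Z/p$ in $\HH_1(\Sp_{2g}(\Z/p^e);M)$ that must be shown to die under the differential from $\HH_2$, and nothing in your genus-induction or stabilizer analysis sees it: the isotropy spectral sequence and the center-kills argument are relative to the mod-$p$ structure and are blind to the unipotent congruence kernel. The paper's Step 2 exists precisely to kill this class: it works one exponent at a time with the extension $1 \to \SpLie_{2g}(\Z/{p}) \to \Sp_{2g}(\Z/{p^{k+1}}) \to \Sp_{2g}(\Z/{p^{k}}) \to 1$, identifies the offending edge term with $\Hom_{\Sp_{2g}(\Z/{p^k})}(\SpLie_{2g}(\Z/p),\SpLie_{2g}(\Z/p)) = \langle \mathrm{id} \rangle$, and invokes Huebschmann's theorem that the first transgression sends $\mathrm{id}$ to the Euler class of the extension, which is nonzero because the extension is proved not to split (Lemma \ref{lemma:sppdecomp}). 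Some such non-splitness input is unavoidable, and your proposal contains no substitute for it.

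A smaller but real issue: even at $e=1$ your plan has no mechanism that actually forces the vanishing for odd $p$. On the full adjoint module the central element $-\One_{2g}$ acts trivially (conjugation squares the sign), so center-kills only disposes of the mixed summand $W \otimes U$ inside a stabilizer; what remains is an induction on $g$ whose base cases --- $\Sp_4$ or $\Sp_6$ with adjoint coefficients, plus the $\SL_2(\Z/{p^e})$ inputs --- are left unverified, and your complex of hyperbolic planes amounts to re-deriving Charney's twisted stability theorem, which the paper instead cites. The paper settles $e=1$ by quoting V\"olklein for $p>5$ and, for $p \in \{2,3,5\}$, by using stability to reduce to $3 \leq g \leq 8$ and then checking by computer; your characteristic-$2$ discussion correctly identifies the non-semisimplicity of the module but likewise terminates in an unspecified ``explicit check.''
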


\begin{remark}
For $L$ a prime greater than $5$, this was originally proven by V\"{o}lklein \cite{Volklein}.
\end{remark}

\begin{remark}
For $L$ odd, the $\Sp_{2g}(\Z/L)$-representation $\SpLie_{2g}(\Z/L)$ is isomorphic to its dual 
(see \S \ref{section:symplecticlie}),
and it follows that $\HH^1(\Sp_{2g}(\Z/L);\SpLie_{2g}(\Z/L)) = 0$ as well.  This self-duality is false for $L$ even,
and in fact computer calculations indicate that $\HH^1(\Sp_{2g}(\Z/L);\SpLie_{2g}(\Z/L))$ can be nonzero for $L$ even.
We also remark that similar computer calculations suggest that the condition $4 \nmid L$ in Theorem \ref{theorem:adjointcoho}
is unnecessary.
\end{remark}

\begin{remark}
Our proof of Theorem \ref{theorem:adjointcoho} has a step that appeals to a computer calculation.  In the
end, this calculation comes down to determining the kernels and cokernels of rather large matrices
with entries in $\Z/p^k$ for small $p$ and $k$.  On a computer, large matrix calculations are 
frequently susceptible to overflow errors.  However, when working modulo $p^k$ it is 
straightforward to avoid this : one must merely be assiduous about reducing all 
numbers modulo $p^k$ after every arithmetic operation.  
\end{remark}

\paragraph{Torsion in the Picard group.}
Our final result is a calculation of the torsion in $\Pic(\Moduli_g(L))$, which 
is isomorphic to the dual of the abelian group $\HH_1(\Mod_g(L);\Z)$ (see \eqref{eqn:mgpicexseq} above).  
It is easiest to state our theorem for mapping class groups of surfaces with
boundary (see Theorem \ref{theorem:modlabel} for the closed case).  
Let $\Sigma_{g,1}$ denote a compact orientable genus $g$ topological surface with $1$ boundary component
and let $\Mod_{g,1}$ denote the group of isotopy classes of orientation preserving diffeomorphisms
of $\Sigma_{g,1}$ that act as the identity on $\partial \Sigma_{g,1}$.  Define $\Mod_{g,1}(L)$
to be the kernel of the action of $\Mod_{g,1}$ on $\HH_1(\Sigma_{g,1};\Z/L)$.

Our calculation of $\HH_1(\Mod_{g,1}(L);\Z)$ is closely related to some beautiful work of 
Dennis Johnson on the {\em Torelli group} $\Torelli_{g,1}$,
which is the kernel of the action of $\Mod_{g,1}$ on $\HH_1(\Sigma_{g,1};\Z)$.  Let $H = \HH_1(\Sigma_{g,1};\Z)$.
Johnson proved that
$$\HH_1(\Torelli_{g,1};\Z) \cong B_2(2g) \oplus \wedge^3 H,$$
where the two indicated terms are as follows (see \S \ref{section:torelli} for a more complete description).
\begin{itemize}
\item $\wedge^3 H$ comes from the {\em Johnson homomorphism}, which is a surjective homomorphism
$\tau : \Torelli_{g,1} \rightarrow \wedge^3 H$ arising from the action of $\Torelli_{g,1}$ on the second
nilpotent truncation of $\pi_1(\Sigma_{g,1};\Z)$.
\item $B_2(2g)$ is the space of boolean (i.e.\ square-free) polynomials over $\Z/2$ in $2g$ generators whose degree
is at most $2$.  These $2g$ generators correspond to a basis for $\HH_1(\Sigma_{g,1};\Z/2)$.  This portion of 
$\HH_1(\Torelli_{g,1};\Z)$
comes from work of Birman-Craggs \cite{BirmanCraggs} and Johnson \cite{JohnsonBirmanCraggs} 
and is related to the Rochlin invariant of homology $3$-spheres.  The corresponding abelian quotient of
$\Torelli_{g,1}$ is known as the {\em Birman-Craggs-Johnson homomorphism}.
\end{itemize}

We have a short exact sequence
$$1 \longrightarrow \Torelli_{g,1} \longrightarrow \Mod_{g,1}(L) \longrightarrow \Sp_{2g}(\Z,L) \longrightarrow 1.$$
To understand $\HH_1(\Mod_{g,1}(L);\Z)$, one must first investigate the extent to which
the Johnson and Birman-Craggs-Johnson homomorphisms extend over $\Mod_{g,1}(L)$.  The following results are
known (see \S \ref{section:extendingtorelli} for more details).
\begin{itemize}
\item Let $H_L = \HH_1(\Sigma_{g,1};\Z/L)$.  
A ``mod $L$'' Johnson homomorphism of the form $\Mod_{g,1}(L) \rightarrow \wedge^3 H_L$
was defined independently by Broaddus-Farb-Putman \cite{BroaddusFarbPutman},
by Perron \cite{Perron}, and by Sato \cite{Sato}.
\item Sato \cite{Sato} proved that ``most'' of the Birman-Craggs-Johnson homomorphism can be extended
over $\Mod_{g,1}(L)$ if $L$ is even.  More precisely, 
observe that $B_2(2g)$ contains a constant term $B_0(2g) \cong \Z/2$.  Sato proved that for $L$ even,
$\HH_1(\Mod_{g,1}(L);\Z)$ contains a term of the form $B_2(2g) / B_0(2g)$.  Sato also proved that
the $B_0(2g)$ piece of $\HH_1(\Torelli_{g,1};\Z)$ does {\em not} exist in $\HH_1(\Mod_{g,1}(2);\Z)$, and
asked whether a similar thing was true for general even $L$.
\end{itemize}
Our final theorem says that the above pieces together with $\HH_1(\Sp_{2g}(\Z,L);\Z)$ 
give the entire abelianization of $\Mod_{g,1}(L)$ if $4 \nmid L$.
It answers Sato's question in the affirmative in these cases.

\begin{maintheorem}[{Abelianization of $\Mod_g(L)$}]
\label{theorem:modlabel1}
Fix $g \geq 5$ and $L \geq 2$ such that $4 \nmid L$.  Setting $H_L = \HH_1(\Sigma_{g,1};\Z/L)$,
there is a short exact sequence
$$0 \longrightarrow K_{g,1} \longrightarrow \HH_1(\Mod_{g,1}(L);\Z) \longrightarrow \HH_1(\Sp_{2g}(\Z,L);\Z)
\longrightarrow 0,$$
where $K_{g,1} \cong \wedge^3 H_L$ if $L$ is odd and $K_{g,1} \cong (B_2(2g) / B_0(2g)) \oplus \wedge^3 H_L$ 
if $L$ is even.
\end{maintheorem}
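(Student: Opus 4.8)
The plan is to run the five-term exact sequence in homology (all coefficients in $\Z$) attached to the extension
\[
1 \longrightarrow \Torelli_{g,1} \longrightarrow \Mod_{g,1}(L) \longrightarrow \Sp_{2g}(\Z,L) \longrightarrow 1,
\]
namely
\[
\HH_2(\Sp_{2g}(\Z,L)) \xrightarrow{\ d_2\ } (\HH_1(\Torelli_{g,1}))_{\Sp_{2g}(\Z,L)} \longrightarrow \HH_1(\Mod_{g,1}(L)) \longrightarrow \HH_1(\Sp_{2g}(\Z,L)) \longrightarrow 0.
\]
The right-hand surjection is exactly the claimed quotient map, so it remains to identify its kernel, which is the image of the coinvariant module, i.e.\ the cokernel of the transgression $d_2$. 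Thus $K_{g,1} \cong \operatorname{coker}(d_2)$, and the theorem reduces to computing (i) the coinvariants $(\HH_1(\Torelli_{g,1}))_{\Sp_{2g}(\Z,L)}$ and (ii) the image of $d_2$ inside them.

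For the coinvariants I would feed in Johnson's splitting $\HH_1(\Torelli_{g,1}) \cong \wedge^3 H \oplus B_2(2g)$ and treat the two summands separately. For the free part, the reduction $\wedge^3 H \to \wedge^3 H_L$ is $\Sp_{2g}(\Z,L)$-invariant (its composite with $\tau$ is the mod $L$ Johnson homomorphism, which extends $\Sp_{2g}(\Z,L)$-equivariantly over $\Mod_{g,1}(L)$), so it factors through the coinvariants; I would prove the induced map $(\wedge^3 H)_{\Sp_{2g}(\Z,L)} \to \wedge^3 H_L$ is an isomorphism by showing the augmentation ideal of $\Sp_{2g}(\Z,L)$ carries $\wedge^3 H$ precisely onto $L \wedge^3 H$. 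One inclusion is automatic since every $\gamma$ acts trivially mod $L$; the reverse reduces to surjectivity of the infinitesimal action $\SpLie_{2g}(\Z/L) \otimes \wedge^3 H_L \to \wedge^3 H_L$, a statement about $\Sp_{2g}(\Z/L)$-modules in the same spirit as Theorem \ref{theorem:adjointcoho} and where the hypothesis $4 \nmid L$ first enters (the module has no trivial subquotient away from $2$, and the obstruction at $2$ is controlled exactly when $4 \nmid L$). For the torsion part $B_2(2g)$ the action factors through $\Sp_{2g}(\Z/2)$: when $L$ is even this action is trivial, so the coinvariants are all of $B_2(2g)$, whereas when $L$ is odd the reduction $\Sp_{2g}(\Z,L) \to \Sp_{2g}(\Z/2)$ is surjective and the coinvariants are $(B_2(2g))_{\Sp_{2g}(\Z/2)}$, which I would compute through the degree filtration of $B_2(2g)$ together with the representation theory of $\Sp_{2g}(\Z/2)$.

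The transgression is the crux and the main obstacle. The mod $L$ Johnson homomorphism shows $\wedge^3 H_L$ is a quotient of $\operatorname{coker}(d_2)$, so $\operatorname{im}(d_2)$ meets the $\wedge^3 H_L$ summand trivially. For $L$ even, Sato's extension of the Birman--Craggs--Johnson homomorphism over $\Mod_{g,1}(L)$ with target $B_2(2g)/B_0(2g)$ shows $\operatorname{coker}(d_2) \twoheadrightarrow B_2(2g)/B_0(2g)$, which forces the upper bound $\operatorname{im}(d_2) \subseteq B_0(2g) \cong \Z/2$. The genuinely new content — and the affirmative answer to Sato's question — is the matching lower bound $B_0(2g) \subseteq \operatorname{im}(d_2)$, i.e.\ that the constant (Rochlin) class really becomes a boundary once the level structure is imposed. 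I would establish this by exhibiting an explicit class in $\HH_2(\Sp_{2g}(\Z,L))$ whose transgression is the nonzero element of $B_0(2g)$: concretely, by realizing a Torelli element whose Birman--Craggs--Johnson invariant is the constant polynomial as trivial in $\HH_1(\Mod_{g,1}(L))$ via an explicit surface relation (the lantern relation, which already governs the even-$L$ divisibility phenomena of Theorems \ref{theorem:mzgen}--\ref{theorem:mh2}, is the natural candidate), again invoking $4 \nmid L$ to guarantee the relevant class is nonzero. For $L$ odd the same type of argument must instead consume all of $(B_2(2g))_{\Sp_{2g}(\Z/2)}$, showing it lies in $\operatorname{im}(d_2)$.

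Assembling the two steps, $K_{g,1} = \operatorname{coker}(d_2)$ is $\wedge^3 H_L$ when $L$ is odd and $(B_2(2g)/B_0(2g)) \oplus \wedge^3 H_L$ when $L$ is even, and the five-term sequence packages this exactly as the asserted short exact sequence with quotient $\HH_1(\Sp_{2g}(\Z,L))$. I expect the coinvariant calculation of the second paragraph to be essentially representation theory, while the determination of $\operatorname{im}(d_2)$ in the third paragraph — pinning down precisely which Rochlin-type class dies in the presence of the level structure — to be the delicate and decisive point.
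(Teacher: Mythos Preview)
Your overall architecture---the five-term sequence for $1 \to \Torelli_{g,1} \to \Mod_{g,1}(L) \to \Sp_{2g}(\Z,L) \to 1$, identification of $K_{g,1}$ with the cokernel of the transgression, and the upper bound on $\operatorname{im}(d_2)$ coming from the mod $L$ Johnson homomorphism and Sato's extended Birman--Craggs--Johnson map---matches the paper exactly. But two of the steps you flag as ``to be done'' are handled quite differently in the paper, and one of them is the decisive point.

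For the coinvariants, the paper does \emph{not} compute $(B_2(2g))_{\Sp_{2g}(\Z/2)}$ by representation theory, nor does it analyze the infinitesimal $\SpLie_{2g}(\Z/L)$-action on $\wedge^3 H_L$. Instead it proves directly (Lemma~\ref{lemma:kill2torsion}, via the crossed lantern relation) that every element of $(\HH_1(\Torelli_{g,1}))_{\Sp_{2g}(\Z,L)}$ is $L$-torsion. For $L$ odd this kills the $2$-torsion $B_2(2g)$ immediately, and a short direct argument (Lemma~\ref{lemma:splinv}) handles $(\wedge^3 H)_{\Sp_{2g}(\Z,L)} \cong \wedge^3 H_L$. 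So for $L$ odd there is nothing for $d_2$ to consume; your proposed step of showing ``all of $(B_2(2g))_{\Sp_{2g}(\Z/2)}$ lies in $\operatorname{im}(d_2)$'' is unnecessary.

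The real divergence is the even case. You propose to show $B_0(2g) \subset \operatorname{im}(d_2)$ by an explicit surface relation (lantern-type) exhibiting the Rochlin constant as a boundary in $\Mod_{g,1}(L)$. The paper does \emph{not} do this, and gives no indication such a direct argument is available. Instead the paper proceeds indirectly through algebraic geometry: Farkas's theorem (Theorem~\ref{theorem:farkas}) says $\Line_g(L)$ is divisible by $4$ in $\Pic(\Moduli_g(L))$; feeding this into the Picard-group machinery (Lemma~\ref{lemma:maintool}) forces the image of $\HH_2(\Mod_g(L))$ in $\HH_2(\Mod_g)\cong\Z$ to be exactly $4\Z$. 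Combined with Theorem~\ref{theorem:splh2} (image of $\HH_2(\Sp_{2g}(\Z,L))$ is $2\Z$), this forces $\Ker(i_{g,0}) \cong \Z/2$ for the closed surface, and naturality of the five-term sequence then gives the $b=1$ case. In other words, the ``purely group-theoretic vanishing'' of $B_0(2g)$ is deduced from the divisibility of the Hodge bundle, not from a mapping-class-group relation. Your proposal correctly locates the crux but underestimates what is needed there; absent a concrete relation that does the job, this is a genuine gap.
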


\begin{remark}
For $L$ odd, this was previously and independently proven by the author \cite{PutmanAbel} and Sato \cite{Sato} and
by Perron \cite{Perron} modulo $2$-torsion.  Sato also proved it for $L = 2$.
\end{remark}

\begin{remark}
As we noted above, $\HH_1(\Sp_{2g}(\Z,L);\Z)$ is known (see \S \ref{section:splabel}).
\end{remark}

\begin{remark}
Fix $L$ such that $L$ is even but $4 \nmid L$.  It follows from the proof of Theorem \ref{theorem:modlabel1} that
the term $B_0(2g)$ of $\HH_1(\Torelli_{g,1};\Z)$ is in the kernel of the natural map 
$\HH_1(\Torelli_{g,1};\Z) \rightarrow \HH_1(\Mod_{g,1}(L);\Z)$.  As we shall see, this purely group-theoretic
vanishing phenomena is a consequence of the fact that $\Line_g(L) \in \Pic(\Moduli_g(L))$ is divisible 
modulo torsion by $4$ instead of merely $2$, like $\PPAVLine_g(L) \in \Pic(\PPAV_g(L))$.
\end{remark}

\begin{remark}
To remove the condition $4 \nmid L$ from the hypotheses of Theorems \ref{theorem:mg}--\ref{theorem:modlabel1} while
still using the basic techniques of this paper, one would have to remove the condition $4 \nmid L$ not 
only from the hypotheses of Theorem \ref{theorem:adjointcoho}, but also from the hypotheses of Theorems
\ref{theorem:stein} and \ref{theorem:hcoho} below.  Computer calculations indicate that this is probably
possible for Theorems \ref{theorem:adjointcoho} and \ref{theorem:hcoho}, but I am not sure if
it is possible for Theorem \ref{theorem:stein} (see the remark following Theorem \ref{theorem:stein} for more
details on this).
\end{remark}

\paragraph{Outline of paper and remarks.}
In \S \ref{section:preliminaries}, we will discuss some preliminary results and definitions about group cohomology
and orbifolds.  The proofs of Theorems \ref{theorem:ppavzgen}--\ref{theorem:adjointcoho} come next: in
\S \ref{section:symplecticgroup}, we discuss some basic results about the symplectic group, in 
\S \ref{section:twistedcoefficients}, we prove Theorem \ref{theorem:adjointcoho}, and in \S \ref{section:ppav}, we
introduce $\PPAV_g$ and prove Theorems \ref{theorem:ppavzgen} and \ref{theorem:splh2}.  In \S \ref{section:torelli},
we then discuss some results about the Torelli subgroup of the mapping class group needed for
the remaining theorems of the paper.  Finally, in \S \ref{section:modulispacecurves} we introduce $\Moduli_g$ and
prove Theorems \ref{theorem:mzgen}--\ref{theorem:mh2} and \ref{theorem:modlabel1}.

\paragraph{Notation.}
Throughout this paper, if $A$ is an abelian group, then we will denote by $A^{\Tor}$ the torsion
subgroup of $A$ and by $A^{\Tf}$ the quotient of $A$ by $A^{\Tor}$.

\paragraph{A naturality convention.}
Throughout this paper, the following will occur several times.  Let $\Gamma$ be a group, let $G$ be a normal
subgroup of $\Gamma$, and let $\phi : G \rightarrow M$ be a homomorphism, where $M$ is an abelian
group with a natural $\Gamma$ action.  For instance, $\Gamma$ might be $\Sp_{2g}(\Z)$ and $M$ might be
$\HH_1(\Sigma_g;\Z)$.  We will say that $\phi$ is {\em $\Gamma$-equivariant} if 
$\phi(y x y^{-1}) = y \cdot \phi(x)$ for all $x \in G$ and $y \in \Gamma$.

\paragraph{Acknowledgments.}
First of all, I wish to thank Richard Hain for suggesting that theorems like the ones proven in this paper
would be interesting, answering numerous algebro-geometric questions, and offering constant encouragement.  I
also wish to thank R.\ Keith Dennis, Gavril Farkas, Joe Harris, Brendan Hassett, 
Aaron Heap, Madhav Nori, Oscar Randal-Williams, Masatoshi Sato, 
and Michael Stein for useful conversations and correspondence.  Finally I wish to thank the referees for
their suggestions, which greatly improved the exposition of this paper.

\section{Preliminaries}
\label{section:preliminaries}

\subsection{Group cohomology}

We begin by reviewing some facts about group cohomology and establishing
some notation.  Good references include \cite{BrownCohomology} and \cite{Maclane}.

\paragraph{Degree zero.}
Let $G$ be a group and $M$ be a $G$-module.  The {\em invariants} of $M$, denoted $M^G$, is
the submodule $\{\text{$x \in M$ $|$ $g \cdot x = x$ for all $g \in G$}\}$.  The {\em coinvariants}
of $M$, denoted $M_G$, is the quotient $M/K$, where $K$ is the submodule spanned by the set 
$\{\text{$g \cdot x - x$ $|$ $x \in M$, $g \in G$}\}$.  We have $\HH^0(G;M) \cong M^G$ and
$\HH_0(G;M) \cong M_G$.

\paragraph{Duality.}
For $M$ an abelian group,
define $M^{\ast} = \Hom(M,S^1)$.  Observe that if $M$ is a
module over $\Z/L$, then $M^{\ast} \cong \Hom(M,\Z/L)$.  We have the following
duality between twisted homology and cohomology.

\begin{lemma}[{\cite[Proposition VI.7.1]{BrownCohomology}}]
\label{lemma:duality}
Let $G$ be a group and let $M$ be a $G$-module.  Then 
there is a natural isomorphism $\HH^k(G;M^{\ast}) \cong (\HH_k(G;M))^{\ast}$ for every $k \geq 0$.
\end{lemma}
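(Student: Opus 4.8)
The plan is to prove this by reducing to the fact that $\Q/\Z$ is an injective $\Z$-module, so that the contravariant functor $(-)^{\ast} = \Hom(-,\Q/\Z)$ is exact and therefore commutes with homology. First I would fix a projective resolution $P_{\bullet} \rightarrow \Z$ of the trivial $\Z[G]$-module $\Z$. By definition, $\HH_k(G;M)$ is the $k$-th homology of the chain complex $C_{\bullet} = P_{\bullet} \otimes_{\Z[G]} M$, while $\HH^k(G;M^{\ast})$ is the $k$-th cohomology of the cochain complex $\Hom_{\Z[G]}(P_{\bullet},M^{\ast})$.

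The key step is the tensor-hom adjunction (adjoint associativity): in each degree there is a natural isomorphism
$$\Hom_{\Z[G]}(P_{\bullet}, \Hom_{\Z}(M,\Q/\Z)) \cong \Hom_{\Z}(P_{\bullet} \otimes_{\Z[G]} M, \Q/\Z) = (C_{\bullet})^{\ast}.$$
Here the $G$-action on $M^{\ast} = \Hom_{\Z}(M,\Q/\Z)$ is exactly the one making this an isomorphism of abelian groups in each degree, and one checks that the differentials on the two sides correspond. Thus the cochain complex computing $\HH^k(G;M^{\ast})$ is identified with the dual cochain complex $(C_{\bullet})^{\ast}$.

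Finally, since $\Q/\Z$ is divisible and hence injective as a $\Z$-module, the functor $(-)^{\ast}$ is exact. An exact contravariant functor commutes with passage to (co)homology, so
$$\HH^k\big((C_{\bullet})^{\ast}\big) \cong \big(\HH_k(C_{\bullet})\big)^{\ast} = (\HH_k(G;M))^{\ast}.$$
Combining this with the identification from the previous paragraph yields the desired isomorphism $\HH^k(G;M^{\ast}) \cong (\HH_k(G;M))^{\ast}$.

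The only subtle point---and the main thing requiring care rather than any genuine obstacle---is naturality. Each of the isomorphisms above (the adjunction, and the exactness-of-duality statement) is natural in the chain complex $C_{\bullet}$ and in $M$, and any two projective resolutions of $\Z$ over $\Z[G]$ are chain homotopy equivalent. Tracking these equivalences through the construction shows that the resulting isomorphism is independent of the choice of $P_{\bullet}$ and natural in $M$, which is precisely the assertion of the lemma.
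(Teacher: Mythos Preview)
Your proof is correct and is precisely the standard argument; indeed, the paper does not supply its own proof but simply cites \cite[Proposition VI.7.1]{BrownCohomology}, whose proof is exactly the tensor--hom adjunction combined with the injectivity of $\Q/\Z$ that you have written out. There is nothing to add.
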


\paragraph{The Hochschild-Serre spectral sequence.}
Let
$$1 \longrightarrow K \longrightarrow G \longrightarrow Q \longrightarrow 1$$
be a short exact sequence of groups and let $M$ be a $G$-module.  The homology Hochschild-Serre spectral
sequence is a first quadrant spectral sequence converging to $\HH_{\ast}(G;M)$ whose $E^2$ page is of the
form
$$E^2_{p,q} \cong \HH_p(Q;\HH_q(K;M)).$$
The edge groups of this spectral sequence have the following interpretations.
\begin{theorem}[{\cite[Proposition 10.2]{Maclane}}]
\label{theorem:edgegroups}
\mbox{}
\begin{itemize}
\item $E^{\infty}_{p,0}$, a subgroup of $E^2_{p,0} \cong \HH_p(Q;\HH_0(K;M)) \cong \HH_p(Q;M_K)$, is
equal to
$$\Image(\HH_p(G;M) \rightarrow \HH_p(Q;M_K)).$$
\item $E^{\infty}_{0,q}$, a quotient of $E^2_{0,q} \cong \HH_0(Q;\HH_q(K;M)) \cong (\HH_q(K;M))_Q$, is
isomorphic to
$$\Image(\HH_q(K;M) \rightarrow \HH_q(G;M)).$$
\end{itemize}
\end{theorem}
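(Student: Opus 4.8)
Since this is Proposition~10.2 in \cite{Maclane}, the plan is to recall the standard derivation of the edge homomorphisms from the construction of the spectral sequence. I would work from two inputs that I take as part of that construction: (1) the spectral sequence arises from a filtration $0 = F_{-1}\HH_n(G;M) \subseteq F_0 \HH_n(G;M) \subseteq \cdots \subseteq F_n \HH_n(G;M) = \HH_n(G;M)$ with canonical isomorphisms $E^{\infty}_{p,q} \cong F_p \HH_{p+q}(G;M)/F_{p-1}\HH_{p+q}(G;M)$; and (2) the two resulting edge homomorphisms are induced by the group homomorphisms in $1 \to K \to G \to Q \to 1$, namely by $G \twoheadrightarrow Q$ on the bottom row and by $K \hookrightarrow G$ on the left column. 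Granting these, both statements follow from a short bookkeeping argument about which differentials can be nonzero.

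First I would treat the bottom edge. In the homological convention the differential is $d^r \colon E^r_{p,q} \to E^r_{p-r,q+r-1}$. A differential into $E^r_{p,0}$ would originate at $E^r_{p+r,1-r}$, which vanishes since $1-r < 0$ for $r \geq 2$; hence on the bottom row one only ever passes to kernels, and $E^{\infty}_{p,0}$ is a subgroup of $E^2_{p,0} \cong \HH_p(Q;M_K)$. For total degree $n = p$ the filtration terminates at $F_p \HH_p(G;M) = \HH_p(G;M)$, so $E^{\infty}_{p,0} \cong \HH_p(G;M)/F_{p-1}\HH_p(G;M)$ is the top quotient of the filtration. The edge homomorphism is therefore the composite of the quotient map $\HH_p(G;M) \twoheadrightarrow E^{\infty}_{p,0}$ with the inclusion $E^{\infty}_{p,0} \hookrightarrow E^2_{p,0}$; since the first map is surjective, its image is exactly $E^{\infty}_{p,0}$. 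By input~(2) this composite is the natural map $\HH_p(G;M) \to \HH_p(Q;M_K)$, giving the first bullet.

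Next I would treat the left edge by the dual bookkeeping. A differential out of $E^r_{0,q}$ would land in $E^r_{-r,q+r-1}$, which vanishes since $-r < 0$; hence in the left column one only ever passes to cokernels, and $E^{\infty}_{0,q}$ is a quotient of $E^2_{0,q} \cong (\HH_q(K;M))_Q$. Here $F_{-1} = 0$, so $E^{\infty}_{0,q} \cong F_0 \HH_q(G;M)$ is a subgroup of $\HH_q(G;M)$, and the edge homomorphism is the surjection $E^2_{0,q} \twoheadrightarrow E^{\infty}_{0,q}$ followed by the inclusion $E^{\infty}_{0,q} \hookrightarrow \HH_q(G;M)$, whose image is again $E^{\infty}_{0,q}$. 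By input~(2) this factors the inclusion-induced map $\HH_q(K;M) \to \HH_q(G;M)$ through its coinvariants, so the image of $\HH_q(K;M) \to \HH_q(G;M)$ equals $E^{\infty}_{0,q}$, giving the second bullet.

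The only genuinely substantive point---and the one I would expect to be the main obstacle were this not a cited result---is input~(2): the identification of the abstractly-defined edge homomorphisms with the maps induced by $K \hookrightarrow G \twoheadrightarrow Q$. Establishing this requires an explicit model for the spectral sequence, for instance the filtration of the two-sided bar resolution (or a suitable double complex), and a chain-level check that the $p=q=0$ corner reproduces the functorial maps $\HH_{\ast}(G;M) \to \HH_{\ast}(Q;M_K)$ and $\HH_{\ast}(K;M) \to \HH_{\ast}(G;M)$. This is precisely the content supplied by \cite[Proposition 10.2]{Maclane}, so I would cite it rather than reproduce the chain-level verification.
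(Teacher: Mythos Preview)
The paper does not prove this statement at all; it is simply quoted from \cite[Proposition 10.2]{Maclane} as background. Your sketch is the standard derivation and is correct: the bookkeeping showing that $E^{\infty}_{p,0} \subseteq E^2_{p,0}$ and that $E^{\infty}_{0,q}$ is a quotient of $E^2_{0,q}$ is accurate, as is the identification of the edge terms with the top quotient and bottom piece of the filtration on $\HH_{p+q}(G;M)$. You also correctly isolate the only nontrivial ingredient, namely that the abstract edge maps agree with the functorially induced maps coming from $K \hookrightarrow G \twoheadrightarrow Q$, and rightly defer that chain-level verification to Mac~Lane.
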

\noindent
A similar spectral sequence exists in cohomology.

Two standard consequences of the Hochschild-Serre spectral sequence are the 5-term
exact sequences
$$\HH_2(G;M) \longrightarrow \HH_2(Q;M_K) \longrightarrow (\HH_1(K;M))_Q \longrightarrow \HH_1(G;M) \longrightarrow \HH_1(Q;M_K) \longrightarrow 0$$
and
$$\HH^2(G;M) \longleftarrow \HH^2(Q;M^K) \longleftarrow (\HH^1(K;M))^Q \longleftarrow \HH^1(G;M) \longleftarrow \HH^1(Q;M^K) \longleftarrow 0.$$
Several times we will use the following consequence of the 5-term exact sequence in homology.

\begin{lemma}
\label{lemma:killchunk}
Let $G$ be a group and $M$ be a $G$-module.  Assume that $G \cong G_1 \oplus G_2$, that $G_2 < G$ acts
trivially on $M$, and that $\HH_1(G_2;\Z) = 0$.  Then $\HH_1(G;M) \cong \HH_1(G_1;M)$ and $\HH^1(G;M) \cong \HH^1(G_1;M)$.
\end{lemma}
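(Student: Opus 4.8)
The plan is to feed the direct product decomposition into the $5$-term exact sequence from \S\ref{section:preliminaries}. Since $G \cong G_1 \oplus G_2$, the subgroup $G_2$ is normal in $G$ with quotient $G_1$, giving a short exact sequence
$$1 \longrightarrow G_2 \longrightarrow G \longrightarrow G_1 \longrightarrow 1.$$
Taking $K = G_2$ and $Q = G_1$ in the $5$-term exact sequence, I first observe that because $G_2$ acts trivially on $M$, the coinvariants $M_{G_2}$ equal $M$, so every occurrence of $M_K$ in the sequence simplifies to $M$. The $5$-term sequence thus reads
$$\HH_2(G;M) \longrightarrow \HH_2(G_1;M) \longrightarrow (\HH_1(G_2;M))_{G_1} \longrightarrow \HH_1(G;M) \longrightarrow \HH_1(G_1;M) \longrightarrow 0.$$

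The key step is to show that the term $(\HH_1(G_2;M))_{G_1}$ vanishes, for which it suffices to show $\HH_1(G_2;M) = 0$. Here the hypotheses combine: since $G_2$ acts trivially on $M$, the universal coefficients theorem for group homology gives $\HH_1(G_2;M) \cong \HH_1(G_2;\Z) \otimes M$, the relevant $\mathrm{Tor}$-term vanishing because $\HH_0(G_2;\Z) \cong \Z$ is free. As $\HH_1(G_2;\Z) = 0$ by assumption, we conclude $\HH_1(G_2;M) = 0$. With this term removed, the tail of the $5$-term sequence collapses to $0 \to \HH_1(G;M) \to \HH_1(G_1;M) \to 0$, yielding the desired isomorphism $\HH_1(G;M) \cong \HH_1(G_1;M)$.

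For the cohomological statement I would run the same argument through the cohomology $5$-term exact sequence attached to the same extension, namely
$$0 \longrightarrow \HH^1(G_1;M^{G_2}) \longrightarrow \HH^1(G;M) \longrightarrow (\HH^1(G_2;M))^{G_1} \longrightarrow \HH^2(G_1;M^{G_2}).$$
Triviality of the $G_2$-action identifies the invariants $M^{G_2}$ with $M$, and the dual form of the universal coefficients theorem gives $\HH^1(G_2;M) \cong \Hom(\HH_1(G_2;\Z),M) = 0$, the $\mathrm{Ext}$-term again vanishing because $\HH_0(G_2;\Z)$ is free. Hence the initial segment collapses to $0 \to \HH^1(G_1;M) \to \HH^1(G;M) \to 0$, giving $\HH^1(G;M) \cong \HH^1(G_1;M)$.

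There is no serious obstacle here: the entire content is the vanishing $\HH_1(G_2;M) = \HH^1(G_2;M) = 0$, immediate from universal coefficients once one uses that $G_2$ acts trivially on $M$. The only points needing a moment's care are that the trivial action must be exploited twice—once to replace $M_K$ (respectively $M^K$) by $M$, and once to invoke universal coefficients—and that one should orient the extension with $G_2$ as the kernel, so that it is $\HH_1(G_2;\Z)$, rather than $\HH_1(G_1;\Z)$, that is forced to vanish.
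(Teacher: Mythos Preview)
Your proof is correct and follows exactly the approach indicated in the paper, which simply says the lemma is ``an immediate consequence of the 5-term exact sequence of the short exact sequence $1 \to G_2 \to G \to G_1 \to 1$ with coefficients in $M$.'' You have merely (and correctly) spelled out the details: that $M_{G_2} = M$ and $M^{G_2} = M$ by triviality of the action, and that $\HH_1(G_2;M) = \HH^1(G_2;M) = 0$ by universal coefficients and the hypothesis $\HH_1(G_2;\Z) = 0$.
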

\begin{proof}
An immediate consequence of the 5-term exact sequence in homology of the short exact sequence
$$1 \longrightarrow G_2 \longrightarrow G \longrightarrow G_1 \longrightarrow 1$$
with coefficients in $M$.
\end{proof}

\paragraph{Equivariant homology.}
To define Chern classes of line bundles over orbifolds, we will need some
basic results about equivariant homology.  A standard reference is
\cite[Chapter VII]{BrownCohomology}.  Throughout this section, $X$ is a CW-complex
and $G$ is a discrete group acting cellularly and properly discontinuously on $X$.

\begin{remark}
We will often apply the results here when $X$ is a smooth manifold and $G$ acts smoothly on $X$.
This is valid since a theorem of Illman \cite{Illman} says such a manifold can always be given
a $G$-invariant triangulation.
\end{remark}

We start with the following definition.

\begin{definition}
Let $EG$ be the universal cover of a fixed classifying space for $G$. The group $G$ acts
freely on $EG \times X$, and we will denote by $EG \times_G X$ the result of the {\em Borel construction}, i.e.\ the
quotient $(EG \times X)/G$.  We define
$$\HH^{\ast}_G(X;\Z) = \HH^{\ast}(EG \times_G Z;\Z) \quad \text{and} \quad \HH_{\ast}^G(X;\Z) = \HH_{\ast}(EG \times_G X;\Z).$$
\end{definition}

\begin{remark}
It is easy to see that $\HH^{\ast}_G(X;\Z)$ and $\HH_{\ast}^G(X;\Z)$ do not depend on the choice of $EG$.
\end{remark}

\begin{remark}
Our definition is different from but equivalent to the definition given in our reference \cite{BrownCohomology};
see \cite[Exercise VII.7.3]{BrownCohomology}.
\end{remark}

We will need the following three results about $\HH^{\ast}_G(X;\Z)$.  Recall that if $A$ is
an abelian group, then $A^{\Tor}$ denotes the torsion subgroup of $A$.

\begin{lemma}
\label{lemma:omnibuseqco}
Let $X$ be a CW complex and $G$ be a discrete group acting cellularly and properly discontinuously
on $X$.
\begin{enumerate}
\item If $X$ is $n$-connected, then $\HH^{k}_G(X;\Z) \cong \HH^{k}(G;\Z)$
and $\HH_{k}^G(X;\Z) \cong \HH_{k}(G;\Z)$ for $0 \leq k \leq n$.
\item For all $k \geq 1$, we have a short exact sequence
$$0 \longrightarrow \Hom((\HH_{k-1}^G(X;\Z))^{\Tor}, \C^{\ast}) \longrightarrow \HH^k_G(X;\Z) \longrightarrow \Hom(\HH_k^G(X;\Z),\Z) \longrightarrow 0.$$
\item If $H$ is a normal subgroup of $G$ that acts freely on $X$, then $\HH^k_G(X;\Z) \cong \HH^k_{G/H}(X/H;\Z)$
and $\HH_k^G(X;\Z) \cong \HH_k^{G/H}(X/H;\Z)$.
\end{enumerate}
\end{lemma}

The first conclusion is proven exactly like \cite[Proposition VII.7.3]{BrownCohomology},
the second conclusion is a translation of the universal coefficients theorem, and
the third conclusion is \cite[Exercise VII.7.5]{BrownCohomology}.

\subsection{Ordinary Picard groups}
\label{section:ordinarypicard}

In this section, we discuss Picard groups.  A good reference for this material (which
contains proofs of all results for which we do not give citations or proofs)
is \cite[\S III.4]{WellsComplex}.  Let $X$ be a topological space.  
To avoid technicalities, we will assume that $X$ is homeomorphic to a CW complex and that
all homology groups of $X$ are finitely generated.  

A {\em complex line bundle} on $X$ is a $1$-dimensional complex vector bundle on $X$.
The {\em topological Picard group} of $X$, denoted $\PicTop(X)$, is the set of isomorphism
classes of complex line bundles on $X$.  The set $\PicTop(X)$ is an abelian group under 
the operation of fiberwise tensor products.  The identity element is the trivial line bundle $X \times \C$.
A continuous map $\phi : Y \rightarrow X$ induces a homomorphism 
$\phi^{\ast} : \PicTop(X) \rightarrow \PicTop(Y)$.

With respect to some trivializing open cover, the transition functions for a complex line
bundle on $X$ are continuous functions taking values in $\GL_1(\C) = \C^{\ast}$.  
Letting $\ContSheaf^{\ast}$ be the sheaf of nonvanishing continuous $\C$-valued functions on $X$, 
we thus have an isomorphism
$$\PicTop(X) \cong \HH^1(X;\ContSheaf^{\ast}).$$
Letting $\ContSheaf$ be the sheaf of $\C$-valued continuous functions on $X$, 
there is a short exact sequence of sheaves
\begin{equation}
\label{eqn:sheafexseq1}
\begin{CD}
0 @>>> \Z @>>> \ContSheaf @>{\text{exp}}>> \ContSheaf^{\ast} @>>> 0.
\end{CD}
\end{equation}
Here $\Z$ is the sheaf of locally constant integer valued functions on $X$ and $\text{exp}$ maps the function
$f$ to the function $e^{2 \pi i f}$.
Since $\ContSheaf$ is a fine sheaf, we have $\HH^1(X;\ContSheaf) = \HH^2(X;\ContSheaf) = 0$.  The
long exact sequence associated to \eqref{eqn:sheafexseq1} thus degenerates to give the following lemma.

\begin{lemma}
\label{lemma:firstchern1}
If $X$ is a CW complex, then
$\PicTop(X) \cong \HH^1(X;\ContSheaf^{\ast}) \cong \HH^2(X;\Z)$.
\end{lemma}

\begin{remark}
The element of $\HH^2(X;\Z)$ associated to $\mathcal{L} \in \PicTop(X)$ is known as the {\em first
Chern class} of $\mathcal{L}$ and is denoted $c_1(\mathcal{L})$.  
\end{remark}

We now make the following definition.

\begin{definition}
If $X$ is a CW complex and $\C^{\ast}$ is the sheaf of locally constant functions on $X$ taking
values in $\C^{\ast}$, then a {\em flat line bundle} on $X$ is an element
of $\HH^1(X;\C^{\ast})$.
\end{definition}

\noindent
In other words, a flat line bundle is a line bundle determined by transition functions
which are locally constant.  There is a natural map of sheaves 
$\C^{\ast} \rightarrow \ContSheaf^{\ast}$, and
the line bundles that can be endowed with the structure of a flat line bundle are exactly the 
image of the resulting map
$\HH^1(X;\C^{\ast}) \rightarrow \HH^1(X;\ContSheaf^{\ast})$.
There is a short exact sequence of sheaves
$$\begin{CD}
0 @>>> \Z @>>> \C         @>{\exp}>>       \C^{\ast}             @>>> 0
\end{CD}$$
whose long exact sequence in cohomology contains the segment
$$\begin{CD}
\HH^1(X;\C) @>{\phi}>> \HH^1(X;\C^{\ast}) @>{c_1}>> \HH^2(X;\Z)
\end{CD}$$
We now analyze the maps in this exact sequence.  Recall that if $A$ is an abelian group, then
$A^{\Tor}$ denotes the torsion subgroup of $A$.
\begin{itemize}
\item The map $c_1$ is the map which returns the first Chern class of the flat line bundle associated to an
element of $\HH^1(X;\C^{\ast})$.  This can be seen by considering the commutative diagram
$$\begin{CD}
0 @>>> \Z @>>> \ContSheaf @>{\exp}>> \ContSheaf^{\ast} @>>> 0\\
@.    @AA{=}A     @AAA                  @AAA                   @.\\
0 @>>> \Z @>>> \C         @>{\exp}>> \C^{\ast}             @>>> 0
\end{CD}$$
of short exact sequences of sheaves.  
\item The image of $c_1$ is isomorphic to $\Hom((\HH_1(X;\Z))^{\Tor},\C^{\ast})$.   Indeed, we have
isomorphisms
$$\HH^1(X;\C) \cong \Hom(\HH_1(X;\Z),\C) \quad \quad \text{and} \quad \quad \HH^1(X;\C^{\ast}) \cong \Hom(\HH_1(X;\Z),\C^{\ast}),$$
and $\phi$ is the evident map arising from the projection $\C \stackrel{\exp}{\longrightarrow} \C^{\ast}$.
In other words, the image of $\phi$ consists of elements of
$\Hom(\HH_1(X;\Z),\C^{\ast})$ which can be lifted to elements of $\Hom(\HH_1(X;\Z),\C)$.  This
is exactly the set of homomorphisms which vanish on the direct summand $(\HH_1(X;\Z))^{\Tor}$
of $\HH_1(X;\Z)$.  It follows 
that the cokernel of $\phi$ (which is isomorphic to the image of $c_1$) is isomorphic to
$\Hom((\HH_1(X;\Z))^{\Tor},\C^{\ast})$.
\end{itemize}
The universal coefficients theorem implies that the torsion subgroup of $\HH^2(X;\Z)$ is isomorphic
to the abelian group $\Hom((\HH_1(X;\Z))^{\Tor},\C^{\ast})$.  The following result follows.

\begin{lemma}
\label{lemma:torsionpicard}
The torsion subgroup of $\Pic(X)$ is equal to the set of line bundles on $X$ that can be
endowed with the structure of a flat line bundle.
\end{lemma}

\begin{remark}
It follows from the above discussion that the isomorphism class (as a complex line bundle) of
the flat line bundle associated to $\phi \in \Hom(\HH_1(X;\Z),\C^{\ast})$ only depends on the restriction
of $\phi$ to $(\HH_1(X;\Z))^{\Tor}$.
\end{remark}

We now give an alternate construction of flat line bundles that we will need later.  Consider
$$\phi \in \Hom(\HH_1(X;\Z),\C^{\ast}) \cong \HH^1(X;\C^{\ast}).$$  
Define $\phi' : \pi_1(X) \rightarrow \C^{\ast}$ to be the composition of $\phi$ with the 
natural projection $\pi_1(X) \rightarrow \HH_1(X;\Z)$.
Let $\tilde{X}$ be the universal cover of $X$.  Define an action of $\pi_1(X)$ on the trivial 
bundle $\tilde{X} \times \C$ by the formula
$$g \cdot (x,z) = (g \cdot x, \phi'(g) z) \quad \quad (x \in \tilde{X}, z \in \C, g \in \pi_1(X)).$$
Defining $E_{\phi} = (\tilde{X} \times \C) / \pi_1(X)$, we have a natural projection map
$E_{\phi} \rightarrow \tilde{X} / \pi_1(X) = X$.  This is a complex line bundle on $X$ which
has a natural flat structure.  We then have the following theorem.  

\begin{lemma}
\label{lemma:alternateflat}
The flat line bundle $E_{\phi} \rightarrow X$ equals $\phi \in \HH^1(X;\C^{\ast})$ under the
identification of flat line bundles with elements of $\HH^1(X;\C^{\ast})$.
\end{lemma}

\noindent
We do not know a reference that proves this theorem in exactly this form.  However,
in \cite[Chapter II.3]{KobayashiDiff} it is proven that for $\phi \in \HH^1(X;\C^{\ast})$, there
exists some $\psi : \HH_1(X;\Z) \rightarrow \C^{\ast}$ such that the flat line bundle
$E_{\psi} \rightarrow X$ equals $\phi$ under the identification of flat line bundles
with elements of $\HH^1(X;\C^{\ast})$.  Tracing through the construction in \cite[Chapter II.3]{KobayashiDiff},
it is easy to see that in fact $\psi = \phi$.

Now assume that $X$ has the structure of a quasiprojective variety over $\C$.  
The {\em Picard group of $X$}, denoted $\Pic(X)$, is the set of isomorphism classes
of algebraic line bundles over $X$.  There is an evident map $\Pic(X) \rightarrow \PicTop(X)$,
and thus a first Chern class map $\Pic(X) \rightarrow \HH^2(X;\Z)$.  This map is in general 
neither injective nor surjective.  However, the following lemma implies that its image
contains $(\HH^2(X;\Z))^{\Tor}$.

\begin{lemma}
\label{lemma:flatalgebraic}
Let $X$ be a quasiprojective variety over $\C$ and let $\phi : \HH_1(X;\Z) \rightarrow \C^{\ast}$ be
a homomorphism whose image is finite.  Then the flat line bundle $E_{\phi} \rightarrow X$ is
algebraic.
\end{lemma}

\noindent
Lemma \ref{lemma:flatalgebraic} is folklore, but we do not know a reference for it so we
include a proof.

\begin{proof}[{Proof of Lemma \ref{lemma:flatalgebraic}}]
Let $\phi' : \pi_1(X) \rightarrow \C^{\ast}$ be the composition of $\phi$ with the natural projection
$\pi_1(X) \rightarrow \HH_1(X;\Z)$.  Let $\tilde{X}$ be the cover of $X$ corresponding to $\Ker(\phi')$.
Since the image of $\phi$ is finite, the cover $\tilde{X} \rightarrow X$ is finite.  The generalized
Riemann existence theorem (see \cite{GrauertRemmert} or \cite[\S XII]{SGA1}) implies that $\tilde{X}$
can be given the structure of a quasiprojective variety over $\C$.  Let $\tilde{E}_{\phi}$ be the pullback
of $E_{\phi}$ to $\tilde{X}$.  By construction, $\tilde{E}_{\phi}$ is the trivial flat line bundle
$\tilde{X} \times \C$.  This is clearly algebraic, so \cite[Proposition 2]{MumfordAbelian}
implies that $E_{\phi}$ is as well.
\end{proof}

\subsection{Orbifolds and their Picard groups}
\label{section:orbifolds}

\subsubsection{Topological theory}
\label{section:orbifoldstop}

We will not bother to set up a proper category of orbifolds, but simply give
definitions and theorems adapted to the needs of this paper (see \cite[Chapter 13]{ThurstonNotes}
for a more general discussion).  We begin with the following definitions.  

\begin{definition}
An {\em orbifold} consists of a pair $(X,G)$, where $X$ is a simply-connected CW complex and $G$ is a discrete
group acting properly discontinuously on $X$ by homeomorphisms.  To avoid technicalities,
we make the following assumptions.
\begin{itemize}
\item For all finite-index subgroups $G'$ of $G$, we require all the homology groups
of $X/G'$ to be finitely generated.
\item We require that $G$ contains some finite-index subgroup that acts freely on $X$.
\end{itemize}
An orbifold $(X,G)$ is a {\em trivial} orbifold if $G$ acts freely.  
If $(X_1,G_1)$ and $(X_2,G_2)$ are orbifolds, then
a map $(X_1,G_1) \rightarrow (X_2,G_2)$ consists of a map $f : X_1 \rightarrow X_2$
and a homomorphism $\phi : G_1 \rightarrow G_2$ such that $f(g \cdot x) = \phi(g) \cdot f(x)$
for all $x \in X_1$ and $g \in G_1$.  We will say that a map $(X_1,G_1) \rightarrow (X_2,G_2)$ is a {\em cover}
if $X_1 = X_2$, the map $f : X_1 \rightarrow X_2$ is the identity, and the homomorphism 
$\phi : G_1 \rightarrow G_2$ is injective.  The cover is {\em finite} if $G_1$ is a finite
index subgroup of $G_2$.
\end{definition}

\begin{remark}
The standard definition of an orbifold involves a local definition.  Our orbifolds are what
are usually called {\em good orbifolds}.
\end{remark}

\begin{remark}  
If $(X,G)$ is a trivial orbifold, then $G$ acts as a group of covering transformations on $X$.  In this case,
the notion of a cover reduces to the standard notion of a cover of the quotient $X/G$.
\end{remark}

We now define the topological Picard group of an orbifold.

\begin{definition}
Let $(X,G)$ be an orbifold.  A {\em $G$-equivariant complex line bundle} on $X$ is a $1$-dimensional complex
vector 
bundle $\rho : E \rightarrow X$ together with an action of $G$ on $E$ such that for all $g \in G$, the diagram
$$\begin{CD}
E @>{\tilde{x} \mapsto g \cdot \tilde{x}}>> E \\
@VV{\rho}V @VV{\rho}V \\
X @>{x \mapsto g \cdot x}>> X
\end{CD}$$
commutes and the induced maps $\rho^{-1}(x) \rightarrow \rho^{-1}(g \cdot x)$ are complex linear
for all $x \in X$.  The {\em topological Picard group} $\PicTop(X,G)$ of $(X,G)$
is the set of $G$-equivariant complex line bundles on $X$ (up to the
obvious equivalence).  The set $\PicTop(X,G)$ forms an abelian group under tensor products.
\end{definition}

\begin{remark}
If $(X,G)$ is a trivial orbifold, then $\PicTop(X,G) \cong \PicTop(X/G)$.
\end{remark}

\begin{remark}
A map $(X_1,G_1) \rightarrow (X_2,G_2)$ induces a homomorphism
$\PicTop(X_2,G_2) \rightarrow \PicTop(X_1,G_1)$.
\end{remark}

The following lemma is the extension of Lemma \ref{lemma:firstchern1} to general orbifolds.

\begin{theorem}
\label{theorem:firstchern2}
Let $(X,G)$ be an orbifold.  We then have an isomorphism $c_1 : \PicTop(X,G) \rightarrow \HH^2_G(X;\Z)$.
\end{theorem}

Here is one way to extract this theorem from the literature.  Let $H \lhd G$
be a finite-index normal subgroup that acts freely on $X$.  Define $G' = G/H$ and $X' = X/H$.  The
space $X'$ is not simply-connected and thus strictly speaking $(X',G')$ is not an orbifold,
but we will relax the condition of simple-connectivity in this proof.
Since $\PicTop(X,H) \cong \PicTop(X')$,
we have $\PicTop(X,G) \cong \PicTop(X',G')$.  Since $G'$ is a finite group, a theorem
of Lashof, May, and Segal \cite{LashofMaySegal} says that
$\PicTop(X',G') \cong \PicTop(EG' \times X',G')$, where $EG'$ is the universal
cover of a classifying space for $G'$.  The group $G'$ acts freely on $EG' \times X'$, 
so $\PicTop(EG' \times X',G') \cong \PicTop((EG' \times X')/(G'))$.  We can now apply
Lemma \ref{lemma:firstchern1} to get that
$$\PicTop((EG' \times X')/(G')) \cong \HH^2((EG' \times X')/(G');\Z) \cong \HH^2_{G'}(X';\Z).$$
Finally, the third conclusion of Lemma \ref{lemma:omnibuseqco} says that 
$\HH^2_{G'}(X';\Z) \cong \HH^2_G(X;\Z)$.

\begin{remark}
The theorem of Lashof, May, and Segal quoted above requires the structure groups of our
bundles to be compact; however, using the standard Gram-Schmidt process we can
reduce the structure groups of our bundles to $\text{U}(1) \cong S^1$.
\end{remark}

\begin{remark}
The earliest appearance of Theorem \ref{theorem:firstchern2} of which we are aware is 
the paper \cite{Conner} of Conner; however, he uses a rather different definition
of equivariant cohomology groups and it takes some work to show that his definition
is equivalent to ours.
\end{remark}

We close this section by discussing flat line bundles on orbifolds.  Their definition
is motivated by Lemma \ref{lemma:alternateflat}.

\begin{definition}
Let $(X,G)$ be an orbifold and let $\phi : \HH_1(G;\Z) \rightarrow \C^{\ast}$ be a homomorphism.  Let
$\phi' : G \rightarrow \C^{\ast}$ be the composition of $\phi$ with the natural projection
$G \rightarrow \HH_1(G;\Z)$.  The {\em flat line bundle} on $(X,G)$ defined by $\phi$ is 
the trivial complex line bundle $X \times \C$ with the $G$-action defined by the formula
$$g \cdot (x,z) = (g \cdot x, \phi'(g) z) \quad \quad (x \in \tilde{X}, z \in \C, g \in G).$$
\end{definition}

\noindent
The first two conclusions of Lemma \ref{lemma:omnibuseqco} imply that if $(X,G)$ is
an orbifold, then the torsion subgroup of $\HH^2_G(X;\Z)$ is isomorphic to
$\Hom((\HH_1(G;\Z))^{\Tor},\C^{\ast})$.  The following lemma is an immediate consequence
of this, Lemma \ref{lemma:alternateflat}, Lemma \ref{lemma:torsionpicard}, and the discussion
following Theorem \ref{theorem:firstchern2}.

\begin{lemma}
\label{lemma:torsionpicardorbifold}
Let $(X,G)$ be an orbifold.  The torsion subgroup of $\Pic(X,G)$ is the set
of elements of $\Pic(X,G)$ that can be endowed with the structure of a flat line bundle.
\end{lemma}

\subsubsection{Quasiprojective orbifolds}

We now discuss quasiprojective orbifolds.  For more information, see \cite{HainModuliTran}.

\begin{definition}
Let $(X,G)$ be an orbifold.  We will say that $(X,G)$ is a {\em quasiprojective orbifold} 
if it satisfies the following conditions.
\begin{itemize}
\item $X$ is a complex manifold and $G$ acts by biholomorphisms.
\item There is a finite-index subgroup $G' < G$ that acts freely on $X$ such that the complex manifold
$X/G'$ is a quasiprojective variety.  We will say that $(X,G')$ is a {\em quasiprojective finite cover} of $(X,G)$.
\end{itemize}
\end{definition}

\begin{remark}
If $(X,G)$ is a quasiprojective orbifold with a quasiprojective finite cover $(X,G')$ and $G''<G'$ is a finite-index subgroup, 
then the generalized Riemann existence theorem (see
\cite{GrauertRemmert} or \cite[\S XII]{SGA1}) implies that $X/G''$ is a quasiprojective variety, so $(X,G'')$ is also a quasiprojective
finite cover.
\end{remark}

\begin{remark}
If $(X,G)$ is a quasiprojective orbifold with a quasiprojective finite cover $(X,G')$ and if $(X,H)$ is a 
finite cover of $(X,G)$, then we can apply the previous remark to the subgroup $G'' = G' \cap H$ to deduce
that $(X,H)$ is a quasiprojective orbifold.
\end{remark}

We now discuss Picard groups of quasiprojective orbifolds.  In \S \ref{section:ordinarypicard} and
\S \ref{section:orbifoldstop}, which
discussed topological Picard groups, we assumed that our spaces were homeomorphic to CW complexes
with all their homology groups finitely generated.  A theorem of Lojasiewicz \cite{LojasiewiczTri}
says that this holds for quasiprojective varieties over $\C$, so the results of those sections
apply here.

\begin{definition}
If $(X,G)$ is a quasiprojective orbifold with a quasiprojective finite cover $(X,G')$, then
an {\em algebraic line bundle} on $(X,G)$ is a $G$-equivariant
holomorphic line bundle on $X$ that induces an algebraic line bundle on the quasiprojective
variety $X/G'$.  It is easy to see that this notion is independent of the choice of quasiprojective
finite cover.  The set of algebraic line bundles on $(X,G)$ forms the {\em Picard group} of $(X,G)$, which
we will denote by $\Pic(X,G)$.  The set $\Pic(X,G)$ forms an abelian group under tensor products.
We will denote by $\PicTriv(X,G)$ the kernel of the natural homomorphism $\Pic(X,G) \rightarrow \PicTop(X,G)$.
\end{definition}

We have the following lemma, which is an immediate corollary of Lemmas \ref{lemma:alternateflat}
and \ref{lemma:flatalgebraic}.

\begin{lemma}
\label{lemma:flatalgebraicorbifold}
If $(X,G)$ is a quasiprojective orbifold and $\mathcal{L}$ is a flat
line bundle on $(X,G)$, then $\mathcal{L}$ is algebraic.
\end{lemma}

\noindent
As a corollary of Lemmas \ref{lemma:torsionpicardorbifold} and \ref{lemma:flatalgebraicorbifold}, we
have the following.  Let $c_1 : \Pic(X,G) \rightarrow \HH^2_G(X;\Z)$ be the composition of the 
maps $\Pic(X,G) \rightarrow \PicTop(X,G)$ and $\PicTop(X,G) \rightarrow \HH^2_G(X;\Z)$.

\begin{lemma}
\label{lemma:torsionalgebraic}
If $(X,G)$ is a quasiprojective orbifold, then $c_1(\Pic(X,G)) \subset \HH^2_G(X;\Z)$ contains
the entire torsion subgroup.
\end{lemma}

If $X$ is a projective variety with $\HH^1(X;\Z) = 0$, then a standard result
says that $\PicTriv(X) = 0$, so we have an injection $c_1 : \Pic(X) \rightarrow \HH^2(X;\Z)$.
The following is the extension of this to quasiprojective orbifolds.

\begin{theorem}[{\cite[Theorem 14.3]{HainModuliTran}}]
\label{theorem:pictriv0}
Let $(X,G)$ be a quasiprojective orbifold with $\HH^1_G(X;\Z) = 0$.  Then
$\PicTriv(X,G) = 0$.
\end{theorem}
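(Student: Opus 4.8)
The plan is to reduce to a genuine smooth quasiprojective variety, apply the exponential sequence there, and control everything through the Hodge theory of $\HH^1$ together with the hypothesis $\HH^1_G(X;\Z)=0$ rewritten as $\HH^1(G;\C)=0$. First I would pick a quasiprojective finite cover $(X,G')$; after replacing $G'$ by the intersection of its finitely many $G$-conjugates (still finite index, still acting freely, and with quasiprojective quotient by the Remark following the definition of a quasiprojective orbifold), I may assume $G'$ is normal in $G$. Then $Y:=X/G'$ is a smooth quasiprojective variety carrying an action of the finite group $Q:=G/G'$ with $Y/Q=X/G$; a $G$-equivariant holomorphic line bundle on $X$ descends to a $Q$-equivariant line bundle on $Y$, and this identifies $\Pic(X,G)$ with the $Q$-equivariant Picard group of $Y$ and $\PicTriv(X,G)$ with its topologically trivial part. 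Restricting the equivariant structure from $G$ to $G'$ gives a homomorphism $\mathrm{res}\colon \PicTriv(X,G)\to \PicTriv(Y)$ whose image lands in the $Q$-invariants $\PicTriv(Y)^Q$.

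Next I would analyze $\PicTriv(Y)$. By the exponential sequence $0\to\Z\to\mathcal{O}_Y\to\mathcal{O}_Y^{\ast}\to 0$ the group $\PicTriv(Y)=\Ker(\Pic(Y)\to\HH^2(Y;\Z))$ has identity component the semiabelian variety $\Pic^0(Y)$, an extension of an abelian variety by a torus, with $\PicTriv(Y)/\Pic^0(Y)$ finite; since $Y$ is smooth quasiprojective, the dimensions of the abelian and toric factors are graded pieces of the weight filtration on the mixed Hodge structure $\HH^1(Y;\C)$. Because $Q$ is finite, taking $Q$-invariants is exact over $\C$ and preserves the weight filtration, so the $\C$-dimension of $\Pic^0(Y)^Q$ is governed by $\HH^1(Y;\C)^Q$. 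Now $\HH^1(Y;\C)^Q\cong\HH^1(Y/Q;\C)=\HH^1(X/G;\C)\cong\HH^1_G(X;\C)\cong\HH^1(G;\C)$, where the last isomorphism uses that $X$ is simply connected (Lemma \ref{lemma:omnibuseqco}(1), with $\C$-coefficients). The hypothesis $\HH^1_G(X;\Z)=0$ gives $\HH^1(G;\Z)=0$, so $\HH_1(G;\Z)$ is torsion and $\HH^1(G;\C)=0$. Hence $\HH^1(Y;\C)^Q=0$, the invariant semiabelian subvariety is $0$-dimensional, and $\PicTriv(Y)^Q$ is finite.

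It follows that $\mathrm{res}(L)$ has finite order for every $L\in\PicTriv(X,G)$, so $\PicTriv(X,G)$ is a torsion group. To conclude that it vanishes I would invoke the flat line bundles: the discussion in \S\ref{section:orbifolds} provides an injection $\Hom(\HH_1(G;\Z),\Q/\Z)\hookrightarrow\HH^2_G(X;\Z)$, so a nontrivial flat bundle is never topologically trivial. A topologically trivial torsion bundle $L\in\PicTriv(X,G)$ is flat — its restriction to $Y$ is a torsion point of the semiabelian variety $\PicTriv(Y)$, hence a finite-order character of $\pi_1(Y)=G'$, and the $Q$-equivariant structure promotes this to a finite-order character of $G$ — and, being topologically trivial, its image under the above injection is $0$, so the character is trivial and $L=0$. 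The main obstacle is precisely this last passage between the orbifold $(X,G)$ and its smooth cover $Y$: identifying $\Pic(X,G)$ and $\PicTriv(X,G)$ with the $Q$-equivariant objects on $Y$, tracking how $Q$ interacts with the semiabelian variety $\PicTriv(Y)$ and the weight filtration, and checking that the residual torsion coming from $\HH^{\ast}(Q;-)$ is exactly the flat part detected in $\HH^2_G(X;\Z)$. The non-compactness of $Y$ (so that $\PicTriv(Y)$ has a genuine toric part, not merely an abelian one) is what makes the Hodge-theoretic bookkeeping delicate, and is where I would expect to spend the most care.
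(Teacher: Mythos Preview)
The paper does not supply its own proof of this statement; it is quoted verbatim from Hain's lecture notes \cite[Theorem 14.3]{HainModuliTran} and used as a black box. There is therefore no argument in this paper to compare your proposal against.

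For what it is worth, your outline follows the natural route (and is close in spirit to Hain's): descend to a smooth quasiprojective quotient $Y=X/G'$ carrying a residual finite group action by $Q=G/G'$, and use the mixed Hodge structure on $\HH^1(Y;\C)$ together with the identification $\HH^1(Y;\C)^Q\cong\HH^1(G';\C)^Q\cong\HH^1(G;\C)$ (inflation--restriction, $Q$ finite) to force the $Q$-invariant part of the semiabelian variety $\PicTriv(Y)$ to be zero-dimensional. One small point: the isomorphism $\HH^1(X/G;\C)\cong\HH^1_G(X;\C)$ you invoke is not Lemma \ref{lemma:omnibuseqco}(1) but rather the standard fact that for properly discontinuous actions with finite stabilizers the projection $X\times_G EG\to X/G$ is a rational cohomology equivalence. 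Your identification of the main obstacle is accurate: lifting the finite-order character of $G'$ to one of $G$ via the $Q$-equivariant structure amounts to controlling $H^1(Q;\mathcal{O}(Y)^\ast)$, and since $Y$ is only quasiprojective the unit group $\mathcal{O}(Y)^\ast$ may be strictly larger than $\C^\ast$, so this step does require the care you anticipate.
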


\subsubsection{Summary lemma}

We now give the following lemma, which summarizes the manner in which we will use
the above results about the Picard groups.  To make sense of its statement, recall that the first conclusion of 
Lemma \ref{lemma:omnibuseqco} says that if $(X,G)$ is an orbifold and $X$ is contractible, then 
$\HH^k_G(X;\Z) \cong \HH^k(G;\Z)$.

\begin{lemma}
\label{lemma:maintool}
Let $(X,G)$ be a quasiprojective orbifold with $X$ contractible and let $G' < G$ be a finite-index subgroup.
Assume that the following hold.
\begin{itemize}
\item $\HH_1(G;\Z) = 0$ and $\HH^1(G';\Z) = 0$.
\item $\HH_2(G;\Q) \cong \Z$ and $\HH_2(G';\Q) \cong \Q$.  
\item There exists $\lambda \in \Pic(X,G)$ such that $c_1(\lambda) \in \HH^2(G;\Z)$ generates $\HH^2(G;\Z)$ (we 
remark that the universal coefficients theorem implies that $\HH^2(G;\Z) \cong \Z$).
\item There exists some $n \geq 1$ and $\delta \in \Pic(X,G')$ such that the pullback 
of $\lambda$ to $\Pic(X,G')$ equals $n \delta + \tau$, where $\tau \in \Pic(X,G')$ is torsion.
\item The image of $\HH_2(G';\Z)$ in $\HH_2(G;\Z) \cong \Z$ is $m \Z$ for some $0 \leq m \leq n$.
\end{itemize}
Then the following hold.
\begin{itemize}
\item $\Pic(X,G') \cong \HH^2(G';\Z)$, so since $\HH_1(G';\Z)$ is torsion we have a short exact sequence
$$0 \longrightarrow \Hom(\HH_1(G';\Z),\C^{\ast}) \longrightarrow \Pic(X,G') \longrightarrow \Z \longrightarrow 0$$
coming from the universal coefficients theorem for $\HH^2(G';\Z)$.
\item $\Pic(X,G')$ is generated modulo torsion by $\delta$.
\item The image of $\HH_2(G';\Z)$ in $\HH_2(G;\Z) \cong \Z$ is $n \Z$.
\end{itemize}
\end{lemma}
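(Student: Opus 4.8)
The plan is to run the first Chern class and universal coefficients machinery of \S\ref{section:orbifolds} to identify $\Pic(X,G')$ with $\HH^2(G';\Z)$, and then to pin down the image of $c_1(\tilde{\lambda})$ in this group by a naturality computation comparing the Kronecker pairings over $G'$ and over $G$.

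First I would show that $c_1$ embeds $\Pic(X,G')$ into $\HH^2(G';\Z)$. Since $X$ is contractible, Lemma \ref{lemma:omnibuseqco}(1) gives $\HH^1_{G'}(X;\Z) \cong \HH^1(G';\Z) = 0$, so Theorem \ref{theorem:pictriv0} yields $\PicTriv(X,G') = 0$; composing $\Pic(X,G') \hookrightarrow \PicTop(X,G')$ with the injection of Lemma \ref{lemma:firstchern} and the identification $\HH^2_{G'}(X;\Z) \cong \HH^2(G';\Z)$ gives an injection $c_1 \colon \Pic(X,G') \hookrightarrow \HH^2(G';\Z)$. Next I would write down the universal coefficients sequence: applying Lemma \ref{lemma:omnibuseqco}(2) with $X$ contractible, and using $\HH_2(G';\Q) \cong \Q$ to get $\Hom(\HH_2(G';\Z),\Z) \cong \Z$, I obtain
$$0 \longrightarrow \Hom(\HH_1(G';\Z),\Q/\Z) \longrightarrow \HH^2(G';\Z) \stackrel{\pi}{\longrightarrow} \Z \longrightarrow 0,$$
exactly as in \eqref{eqn:mgpicexseq} and \eqref{eqn:ppavexseq}. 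The same input applied to $G$ (using $\HH_1(G;\Z) = 0$ to kill the $\Hom(-,\Q/\Z)$ term) identifies $\HH^2(G;\Z) \cong \Hom(\HH_2(G;\Z),\Z)$, so the generator $c_1(\lambda)$ corresponds to an isomorphism $\langle c_1(\lambda), - \rangle \colon \HH_2(G;\Z) \stackrel{\cong}{\longrightarrow} \Z$.

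The crux is to compute $\pi(c_1(\tilde{\lambda}))$. Since $\tilde{\lambda}$ is the pullback of $\lambda$, the class $c_1(\tilde{\lambda})$ is the restriction of $c_1(\lambda)$ along $G' \hookrightarrow G$, so naturality of the Kronecker pairing gives $\langle c_1(\tilde{\lambda}), \sigma \rangle = \langle c_1(\lambda), \bar{\sigma} \rangle$ for every $\sigma \in \HH_2(G';\Z)$, where $\bar{\sigma}$ is the image of $\sigma$ in $\HH_2(G;\Z)$. Under $\pi$, the class $c_1(\tilde{\lambda})$ therefore goes to the functional $\sigma \mapsto \langle c_1(\lambda), \bar{\sigma}\rangle$, whose image is $\langle c_1(\lambda),-\rangle$ applied to $\Image(\HH_2(G';\Z) \to \HH_2(G;\Z) \cong \Z) = m\Z$, so $\pi(c_1(\tilde{\lambda})) = \pm m$ in $\Z$. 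Now the two hypotheses on $n$ and $m$ collide: divisibility of $\tilde{\lambda}$ by $n$, say $\tilde{\lambda} = n\mu$, forces $m = \pi(c_1(\tilde{\lambda})) = n\,\pi(c_1(\mu))$, so $n \mid m$; while a transfer argument—$G'$ has finite index in $G$, so $\HH_2(G';\Q) \to \HH_2(G;\Q)$ is onto and hence, both being $\Q$, an isomorphism—shows $m \neq 0$. With $0 \leq m \leq n$ this gives $m = n$, which is the third conclusion.

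Finally I would read off the remaining conclusions. From $m = n$ we get $\pi(c_1(\mu)) = \pm 1$, so $c_1(\mu)$ generates $\HH^2(G';\Z)$ modulo the torsion subgroup $\Hom(\HH_1(G';\Z),\Q/\Z) = \Ker(\pi)$. That torsion subgroup is realized by flat line bundles, which are algebraic, so $\Image(c_1)$ already contains $\Ker(\pi)$ and surjects onto $\Z$ via $\pi$; combined with the injectivity from the first step, this shows $c_1 \colon \Pic(X,G') \stackrel{\cong}{\longrightarrow} \HH^2(G';\Z)$ and that $\Pic(X,G')$ is generated modulo torsion by $\mu = \frac{1}{n}\tilde{\lambda}$. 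The step I expect to be most delicate is the pairing computation for $\pi(c_1(\tilde{\lambda}))$: one must correctly match the identification $\HH_2(G;\Z) \cong \Z$ used in the hypotheses with the one induced by $\langle c_1(\lambda),-\rangle$, and invoke transfer to exclude $m=0$. Everything else is bookkeeping with the two exact sequences above.
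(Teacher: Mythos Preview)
Your proof is correct and follows essentially the same approach as the paper: both set up the injection $c_1\colon \Pic(X,G')\hookrightarrow \HH^2(G';\Z)$ via Theorem~\ref{theorem:pictriv0} and Lemma~\ref{lemma:firstchern}, note that flat bundles realize the torsion, and then read everything off the commutative diagram of universal coefficients sequences for $G$ and $G'$. You simply unpack that diagram chase explicitly via the Kronecker pairing, and you add the transfer argument to exclude $m=0$, a point the paper leaves implicit in the phrase ``immediate consequences of the commutative diagram.''
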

\begin{proof}
The first step is to translate everything into statements about cohomology.
Since $\HH^1(G';\Z) = 0$, Theorems \ref{theorem:pictriv0} and \ref{theorem:firstchern2}
imply that we have an injection $c_1 : \Pic(X,G') \hookrightarrow \HH^2(G';\Z)$.  Let 
$\overline{\delta} = c_1(\delta)$.  We will prove the following two statements in the next paragraph.
\begin{enumerate}
\item The abelian group $\HH^2(G';\Z)$ is generated modulo torsion by $\overline{\delta}$.  
\item The image of $\HH_2(G';\Z)$ in $\HH_2(G;\Z) \cong \Z$ is $n \Z$.
\end{enumerate}
Lemma \ref{lemma:torsionalgebraic} says that
$\Pic(X,G') \subset \HH^2(G';\Z)$ contains the entire torsion subgroup of $\HH^2(G';\Z)$.  Since
$\delta$ is algebraic, we will be able to conclude that $\Pic(X,G') = \HH^2(G';\Z)$ 
and that $\Pic(X,G')$ is generated modulo torsion by $\delta$.   

It remains to prove the above two assertions.  For an abelian group $A$, recall that $A^{\Tf}$ denotes
the quotient of $A$ by its torsion subgroup $A^{\Tor}$.  
Let $\overline{\delta}' \in (\HH^2(G';\Z))^{\Tf}$ be the image of 
$\overline{\delta} \in \HH^2(G';\Z)$.  One of the assertions we want to prove is that
$\overline{\delta}'$ generates $(\HH^2(G';\Z))^{\Tf}$.  By the universal coefficients theorem, we have
$$(\HH^2(G;\Z))^{\Tf} = \Hom(\HH_2(G;\Z),\Z) \quad \quad \text{and} \quad \quad (\HH^2(G';\Z))^{\Tf} = \Hom(\HH_2(G';\Z),\Z).$$
Since $\HH_2(G;\Z) \cong \Z$ and $\HH_2(G';\Q) \cong \Q$, we deduce that both $\Hom(\HH_2(G;\Z),\Z)$ and
$\Hom(\HH_2(G';\Z),\Z)$ are isomorphic to $\Z$.  By assumption, the image of
$\HH_2(G';\Z))$ in $\HH_2(G;\Z) \cong \Z$ is $m \Z$ for some $0 \leq m \leq n$.  This implies that
the image of $\Hom(\HH_2(G;\Z),\Z)$ in $\Hom(\HH_2(G';\Z),\Z) \cong \Z$ is $m \Z$.  However, the image of
$\lambda \in \Pic(X,G)$ in $(\HH^2(G;\Z))^{\Tf} = \Hom(\HH_2(G;\Z),\Z) \cong \Z$ is a generator which
maps by assumption to $n \overline{\delta}'$.  We conclude that $m = n$ (assertion 2 above) and that
$\overline{\delta}'$ generates $(\HH^2(G';\Z))^{\Tf}$ (assertion 1 above).
\end{proof}

\section{The symplectic group}
\label{section:symplecticgroup}

The key to proving our theorems about $\PPAV_g(L)$ is the exact sequence
$$1 \longrightarrow \Sp_{2g}(\Z,L) \longrightarrow \Sp_{2g}(\Z) \longrightarrow \Sp_{2g}(\Z/L) \longrightarrow 1.$$
To analyze it, we will need some results about $\Sp_{2g}(\Z,L)$ and $\Sp_{2g}(\Z/L)$ that 
are collected in \S \ref{section:splabel}--\ref{section:decompthm}.

\begin{remark}
The fact that the map $\Sp_{2g}(\Z) \rightarrow \Sp_{2g}(\Z/L)$ is surjective is nontrivial.  For a proof,
see \cite[Theorem 1]{NewmanSmart}.
\end{remark}

\subsection{The abelianization of $\Sp_{2g}(\Z,L)$}
\label{section:splabel}

The starting point here is the following theorem of Newman and Smart.

\begin{theorem}[{Newman-Smart, \cite[Theorem 7]{NewmanSmart}}]
\label{theorem:splquotient}
Consider $g \geq 2$ and $L,L' \geq 2$ such that $L' \mid L$.  We then
have an $\Sp_{2g}(\Z)$-equivariant isomorphism
$\Sp_{2g}(\Z,L) / \Sp_{2g}(\Z,L L') \cong \SpLie_{2g}(\Z/L')$.
\end{theorem}

\begin{remark}
Newman and Smart do not mention the fact that this isomorphism is $\Sp_{2g}(\Z)$-equivariant,
but this fact is evident from their proof.
\end{remark}

The isomorphism in Theorem \ref{theorem:splquotient} is constructed as follows.  For simplicity,
we will deal with the case that $L' = L \geq 2$, so we will construct a map
$\phi : \Sp_{2g}(\Z,L) \rightarrow \SpLie_{2g}(\Z/L)$.
An element $M \in \Sp_{2g}(\Z,L)$ can be written in the form $\One_{2g} + L \cdot A$ for some matrix $A$.  Define
$\phi(M)$ to equal $A$ modulo $L$.  Newman and Smart prove that $\phi(M) \in \SpLie_{2g}(\Z/L)$ and that
every element of $\SpLie_{2g}(\Z/L)$ can be so obtained.
It is easy see that this is a homomorphism whose kernel
is $\Sp_{2g}(\Z,L^2)$.  Summing up, we have a short exact sequence
\begin{equation}
\label{eqn:splsplie}
1 \longrightarrow \Sp_{2g}(\Z,L^2) \longrightarrow \Sp_{2g}(\Z,L) \longrightarrow \SpLie_{2g}(\Z/L) \longrightarrow 1.
\end{equation}

For $L$ odd and $g \geq 3$, the following result (which was proven independently by Perron, Sato,
and the author) shows that $\phi$ gives the entire abelianization of $\Sp_{2g}(\Z,L)$.

\begin{theorem}[{Perron \cite{Perron}, Putman \cite{PutmanAbel}, Sato \cite{Sato}}]
\label{theorem:splabel}
Fix $g \geq 3$ and $L > 2$ such that $L$ is odd.  We then have $[\Sp_{2g}(\Z,L),\Sp_{2g}(\Z,L)] = \Sp_{2g}(\Z,L^2)$.
\end{theorem}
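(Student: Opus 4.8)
The plan is to prove the two inclusions of the asserted equality separately, writing $D = [\Sp_{2g}(\Z,L),\Sp_{2g}(\Z,L)]$. One inclusion is immediate: since $\SpLie_{2g}(\Z/L)$ is abelian, the homomorphism $\phi$ of \eqref{eqn:splsplie} kills every commutator, so $D \subseteq \Ker(\phi) = \Sp_{2g}(\Z,L^2)$. All of the content lies in the reverse inclusion $\Sp_{2g}(\Z,L^2) \subseteq D$, which is equivalent to the assertion that $\phi$ induces an isomorphism $\HH_1(\Sp_{2g}(\Z,L);\Z) \cong \SpLie_{2g}(\Z/L)$. Using the decomposition theorem of \S\ref{section:decompthm} to split everything according to the prime factorization of $L$, I would first reduce to the case $L = p^k$ for an odd prime $p$; this puts at my disposal the congruence filtration $\Sp_{2g}(\Z,p) \supseteq \Sp_{2g}(\Z,p^2) \supseteq \cdots$, whose successive quotients are all identified with $\SpLie_{2g}(\Z/p)$ by Lemma \ref{lemma:sppquotient}, and whose associated graded Lie algebra carries the bracket induced by the group commutator.

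The next step is a direct computation with elementary symplectic (root-group) matrices $x_\gamma(c) = \One_{2g} + c X_\gamma$, where $\gamma$ ranges over the roots of type $C_g$ and $X_\gamma$ is the corresponding root vector. By the standard structure theory of congruence subgroups, $\Sp_{2g}(\Z,L^2)$ is the normal closure in $\Sp_{2g}(\Z)$ of these elements with $c \in L^2\Z$; since $D$ is normal in $\Sp_{2g}(\Z)$, it suffices to feed such generators into $D$. The Chevalley commutator formula turns this into explicit identities. For short roots I would use the exact relations $[x_{e_i-e_k}(Lu),x_{e_k-e_j}(Lv)] = x_{e_i-e_j}(L^2 uv)$ and $[x_{e_i+e_k}(Lu),x_{e_j-e_k}(Lv)] = x_{e_i+e_j}(\pm L^2 uv)$, whose structure constants are $\pm 1$ and which have no higher-order correction term; crucially, these require a third index $k$, and this is exactly where the hypothesis $g \geq 3$ is used. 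For a long root one instead computes the exact identity $[x_{e_i+e_j}(Lu),x_{e_i-e_j}(Lv)] = x_{2e_i}(\mp 2 L^2 uv)$, in which the structure constant is $\pm 2$.

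The factor of $2$ in the long-root identity is the conceptual heart of the theorem and is where the parity hypothesis on $L$ enters. Passing to the first nontrivial graded quotient $\Sp_{2g}(\Z,p^{2k})/\Sp_{2g}(\Z,p^{2k+1}) \cong \SpLie_{2g}(\Z/p)$, the image of $D$ is the Lie bracket $[\SpLie_{2g}(\Z/p),\SpLie_{2g}(\Z/p)]$. Because $2$ is invertible modulo the odd prime $p$, the relation $[X_{e_i+e_j},X_{e_i-e_j}] = \mp 2 X_{2e_i}$ shows that the long-root vectors lie in this bracket; combined with the short-root identities above and the toral relations obtained from $[X_\gamma,X_{-\gamma}]$, this shows that $\SpLie_{2g}(\Z/p)$ is perfect for $p$ odd. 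Applying the same bracket computation to $[\text{gr}_k,\text{gr}_{m-k}]$ shows that $D$ surjects onto every graded quotient $\text{gr}_m$ with $m \geq 2k$, so that $\Sp_{2g}(\Z,p^{2k}) = D \cdot \Sp_{2g}(\Z,p^m)$ for all $m$. For even $L$ this argument breaks down precisely at the long roots, and that failure is the mechanism behind the extra $2$-torsion (the $B_0(2g)$ term and the divisibility by $4$ rather than $2$) appearing elsewhere in the paper.

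It remains to upgrade the family of congruence-level equalities $\Sp_{2g}(\Z,p^{2k}) = D \cdot \Sp_{2g}(\Z,p^m)$ to the honest equality $\Sp_{2g}(\Z,p^{2k}) = D$, and this is the step I expect to demand the most care. Set $A = \Sp_{2g}(\Z,p^{2k})/D$, a quotient of the finite group $\HH_1(\Sp_{2g}(\Z,p^k);\Z)$ (finiteness coming from Kazhdan's property (T), which holds for $g \geq 2$). I would first argue that $A$ is a $p$-group: its prime-to-$p$ part vanishes because, by the congruence subgroup property, the prime-to-$p$ part of the profinite completion of $\Sp_{2g}(\Z,p^k)$ agrees with that of $\Sp_{2g}(\Z_\ell)$ for $\ell\neq p$, which is perfect. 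The congruence-level equalities above say exactly that the pro-$p$ completion of $A$ vanishes, and for a finite abelian $p$-group this forces $A = 0$. Reassembling the prime-by-prime conclusions via the decomposition theorem then yields the equality for general odd $L$. The genuinely delicate point throughout is this final passage from graded data to the exact equality: the odd-coefficient long-root transvections $x_{2e_i}(L^2)$ are never exact commutators of level-$L$ elements, and enter $D$ only through this completion argument.
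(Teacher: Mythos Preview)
The paper does not prove this theorem; it is quoted as a known result with attribution to Perron, Putman, and Sato, so there is no in-paper argument to compare against.  What follows is an assessment of your proposal on its own terms.

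Your overall architecture---the easy inclusion $D\subseteq\Sp_{2g}(\Z,L^2)$ via $\phi$, the Chevalley commutator identities producing short-root and doubled long-root transvections in $D$, the perfectness of $\SpLie_{2g}(\Z/p)$ for $p$ odd, and the passage from graded surjectivity to the exact equality via a finiteness/completion argument---is sound, and your identification of the factor of $2$ on long roots as the place where odd $L$ is used is exactly right.  Two points deserve comment.  First, the ``decomposition theorem of \S\ref{section:decompthm}'' you cite is Lemma~\ref{lemma:spldecomp}, which splits $\Sp_{2g}(\Z/L)$, not $\Sp_{2g}(\Z,L)$; the congruence subgroups themselves do not decompose as a direct product over primes, so the reduction to $L=p^k$ is not as immediate as you suggest.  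The correct mechanism is the one you invoke later anyway: the congruence subgroup property identifies the profinite completion of $\Sp_{2g}(\Z,L)$ with $\prod_\ell K_\ell$, and since the abelianization is finite it injects into $\prod_\ell \HH_1(K_\ell)$, which \emph{does} split the problem by primes.  Second, your claim that $\Sp_{2g}(\Z_\ell)$ is perfect for $\ell\neq p$ (which you need in order to kill the prime-to-$p$ part of $A$) is true for $g\geq 3$, but the quickest justification is not a direct commutator computation---it is that $\Sp_{2g}(\Z)$ is perfect (Theorem~\ref{theorem:hsp}) and dense in $\Sp_{2g}(\Z_\ell)$, and the commutator subgroup of a profinite group is closed.

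The approach in the cited papers is more elementary: rather than invoking the congruence subgroup property and property~(T), one works directly with generators of $\Sp_{2g}(\Z,L^2)$ and produces each long-root transvection $x_{2e_i}(L^2)$ inside $D$ by combining the identity $x_{2e_i}(2L^2c)\in D$ with further explicit commutator manipulations that exploit the invertibility of $2$ modulo the odd integer $L$.  Your route trades those manipulations for structural machinery; it is heavier but conceptually cleaner, and it makes transparent why the argument cannot be pushed through when $2\mid L$.
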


\begin{corollary}
\label{corollary:splabel}
Fix $g \geq 3$ and $L > 2$ such that $L$ is odd.  We then have an $\Sp_{2g}(\Z)$-equivariant isomorphism
$\HH_1(\Sp_{2g}(\Z,L);\Z) \cong \SpLie_{2g}(\Z/L)$.
\end{corollary}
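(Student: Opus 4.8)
The plan is to read off the corollary directly from Theorem \ref{theorem:splabel} together with the short exact sequence \eqref{eqn:splsplie}. Recall that for any group $G$ one has $\HH_1(G;\Z) \cong G/[G,G]$. Applying this to $G = \Sp_{2g}(\Z,L)$ and invoking Theorem \ref{theorem:splabel}, which identifies the commutator subgroup $[\Sp_{2g}(\Z,L),\Sp_{2g}(\Z,L)]$ with $\Sp_{2g}(\Z,L^2)$, we obtain
$$\HH_1(\Sp_{2g}(\Z,L);\Z) \cong \Sp_{2g}(\Z,L)/\Sp_{2g}(\Z,L^2).$$
First I would then use \eqref{eqn:splsplie} to identify this quotient: the homomorphism $\phi$ appearing there has kernel $\Sp_{2g}(\Z,L^2)$ and is surjective onto $\SpLie_{2g}(\Z/L)$, so it descends to an isomorphism $\Sp_{2g}(\Z,L)/\Sp_{2g}(\Z,L^2) \cong \SpLie_{2g}(\Z/L)$. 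Composing the two isomorphisms yields the desired identification $\HH_1(\Sp_{2g}(\Z,L);\Z) \cong \SpLie_{2g}(\Z/L)$.

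The only point that is not completely formal is the naturality. The outer group $\Sp_{2g}(\Z)$ acts on the normal subgroup $\Sp_{2g}(\Z,L)$ by conjugation; this action preserves the commutator subgroup and hence descends to an action on $\HH_1(\Sp_{2g}(\Z,L);\Z)$. I would verify that under the isomorphism just produced, this action corresponds to the conjugation action of $\Sp_{2g}(\Z)$ on $\SpLie_{2g}(\Z/L)$. But this is precisely the statement that $\phi$ is $\Sp_{2g}(\Z)$-equivariant, namely $\phi(y x y^{-1}) = y \cdot \phi(x)$ for all $x \in \Sp_{2g}(\Z,L)$ and $y \in \Sp_{2g}(\Z)$, which was recorded when $\phi$ was constructed. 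Since $\phi$ is the map inducing the isomorphism, the isomorphism is automatically $\Sp_{2g}(\Z)$-equivariant.

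Because every ingredient has already been assembled, I expect no genuine obstacle here: the corollary is a formal consequence of Theorem \ref{theorem:splabel} and the exact sequence \eqref{eqn:splsplie}, with the compatibility of the $\Sp_{2g}(\Z)$-actions being the one step that merits an explicit (though immediate) verification.
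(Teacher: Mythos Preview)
Your proof is correct and is precisely the argument the paper intends: the corollary is stated without proof because it follows immediately from Theorem \ref{theorem:splabel} and the exact sequence \eqref{eqn:splsplie}, together with the $\Sp_{2g}(\Z)$-equivariance of $\phi$ noted at its construction.
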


\begin{remark}
A similar result for congruence subgroups of $\SL_n(\Z)$ was proved by Lee and Szczarba \cite{LeeSzczarba}.
In that case, the restriction to $L$ odd is not necessary.
\end{remark}

Now assume that $L$ is even.  In this case, the abelianization of $\Sp_{2g}(\Z,L)$ is slightly larger
than $\SpLie_{2g}(\Z/L)$.  Define $\Sp_{2g}(\Z,L,2L)$ to be the set of block matrices 
$\MatTwoTwo{A}{B}{C}{D}$ in $\Sp_{2g}(\Z,L)$ such that the diagonal entries of $B$ and
$C$ are equal to $0$ modulo $2L$.  Igusa proved the following.

\begin{theorem}[{Igusa, \cite[Lemma 1]{IgusaGraded}}]
\label{theorem:igusa}
The following hold for $g \geq 1$ and $L$ even.
\begin{itemize}
\item $\Sp_{2g}(\Z,L,2L)$ is a normal subgroup of $\Sp_{2g}(\Z)$.
\item $\Sp_{2g}(\Z,L) / \Sp_{2g}(\Z,L,2L) \cong \HH_1(\Sigma_g;\Z/2)$ as $\Sp_{2g}(\Z)$-modules.
\item The group $\Sp_{2g}(\Z,L) / \Sp_{2g}(\Z,L^2,2L^2)$ is abelian.
\end{itemize}
\end{theorem}

\begin{remark}
The second and third conclusions of Theorem \ref{theorem:igusa} do not appear in quite this
form in \cite{IgusaGraded}.
For the second, Igusa states that 
$\Sp_{2g}(\Z,L) / \Sp_{2g}(\Z,L,2L) \cong (\Z/2)^{2g}$ via an isomorphism that takes
$\One_g + L \MatTwoTwo{A}{B}{C}{D} \in \Sp_{2g}(\Z,L)$ to
the $2g$-dimensional vector whose entries are the diagonals of $B$ and $C$ modulo $2$.  The
fact that this is an $\Sp_{2g}(\Z)$-equivariant isomorphism to $\HH_1(\Sigma_g;\Z/2)$ is
implicit in his proof.
For the third, Igusa only claims that the group $\Sp_{2g}(\Z,L) / \Sp_{2g}(\Z,2L,4L)$
is abelian; however, his proof actually gives the stronger conclusion that
$\Sp_{2g}(\Z,L) / \Sp_{2g}(\Z,L^2,2L^2)$ is abelian.
\end{remark}

Applying Theorem \ref{theorem:igusa}, we can take the quotient of exact sequence \eqref{eqn:splsplie} by 
$\Sp_{2g}(\Z,L^2,2L^2)$ and get an exact sequence
$$0 \longrightarrow \HH_1(\Sigma_g;\Z/2) \longrightarrow \Sp_{2g}(\Z,L) / \Sp_{2g}(\Z,L^2,2L^2) \longrightarrow
\SpLie_{2g}(\Z/L) \longrightarrow 0.$$
of abelian groups with actions of $\Sp_{2g}(\Z)$.  Sato proved that we have 
obtained the abelianization of $\Sp_{2g}(\Z,L)$.

\begin{theorem}[{Sato \cite{Sato}}]
\label{theorem:sato}
Fix $g \geq 3$ and $L \geq 2$ such that $L$ is even.  We then have 
$$[\Sp_{2g}(\Z,L),\Sp_{2g}(\Z,L)] = \Sp_{2g}(\Z,L^2,2L^2).$$
\end{theorem}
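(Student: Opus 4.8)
The plan is to prove the two inclusions of the asserted equality separately, writing $\Gamma' = [\Sp_{2g}(\Z,L),\Sp_{2g}(\Z,L)]$ and $N = \Sp_{2g}(\Z,L^2,2L^2)$. The inclusion $\Gamma' \subseteq N$ is already in hand: the discussion preceding the statement shows that $\Sp_{2g}(\Z,L)/N$ is abelian. The whole problem is therefore the reverse inclusion $N \subseteq \Gamma'$. Two structural remarks organize the search. First, $\Gamma'$ is normal in $\Sp_{2g}(\Z)$, being characteristic in the normal subgroup $\Sp_{2g}(\Z,L)$, and $N$ is normal in $\Sp_{2g}(\Z)$ by Igusa's result; so it suffices to realize a set of \emph{normal} generators of $N$ inside $\Gamma'$. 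Second, since $\HH^1(\Sp_{2g}(\Z,L);\Z)=0$ (Kazhdan), the finitely generated group $\HH_1(\Sp_{2g}(\Z,L);\Z)=\Sp_{2g}(\Z,L)/\Gamma'$ is finite, so $\Gamma'$ is a finite-index normal subgroup of $\Sp_{2g}(\Z)$; this will let us terminate the argument. Guiding everything is the infinitesimal picture: under the congruence layer isomorphism $\Sp_{2g}(\Z,L^2)/\Sp_{2g}(\Z,L^3) \cong \SpLie_{2g}(\Z/L)$ (an instance of the construction behind \eqref{eqn:splsplie}), commutators of level-$L$ elements should realize exactly the bracket image $[\SpLie_{2g}(\Z),\SpLie_{2g}(\Z)]$, and I claim this image matches the image of $N$.

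The engine is the elementary identity $[\One_{2g} + L A, \One_{2g} + L B] \equiv \One_{2g} + L^2 (AB - BA) \pmod{L^3}$ for $A,B \in \SpLie_{2g}(\Z)$. Feeding in the unipotents $\MatTwoTwo{\One_g}{L S}{\Zero_g}{\One_g}$ and $\MatTwoTwo{\One_g}{\Zero_g}{L T}{\One_g}$ (with $S,T$ symmetric) and the torus elements $\MatTwoTwo{\One_g + L a}{\Zero_g}{\Zero_g}{(\One_g + L a)^{-t}}$, one checks that modulo $\Sp_{2g}(\Z,L^3)$ the commutators realize the full $\mathfrak{gl}_g$ worth of diagonal-block elements, while in the off-diagonal blocks they produce \emph{exactly} the symmetric matrices of the form $M + M^t$. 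The crucial elementary observation is that $M+M^t$ has even diagonal, and that conversely every even-diagonal symmetric matrix has this shape ($M = E_{ij}$ gives $E_{ij}+E_{ji}$ and $M = E_{ii}$ gives $2E_{ii}$). Hence $[\SpLie_{2g}(\Z),\SpLie_{2g}(\Z)]$ is the index-$2^{2g}$ even-diagonal sublattice of $\SpLie_{2g}(\Z)$, and its reduction mod $L$ is precisely the image of $N$ in the top layer $\SpLie_{2g}(\Z/L)$. This single computation simultaneously produces the desired top-layer elements and explains why the odd-diagonal directions cannot be reached there, so that the answer is $N$ rather than all of $\Sp_{2g}(\Z,L^2)$.

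To pass from the top layer to all of $N$, I would show that $\Gamma'$ contains the deep congruence subgroup $\Sp_{2g}(\Z,2L^2)$. The point is that this single wholesale containment disposes of every deeper level at once: because $L$ is even we have $2L^2 \mid L^3$, so $\Sp_{2g}(\Z,L^3) \subseteq \Sp_{2g}(\Z,2L^2)$, and $N$ imposes no even-diagonal restriction below the top layer (for $k\geq 3$ one has $L^k \equiv 0 \pmod{2L^2}$). Thus once $\Sp_{2g}(\Z,2L^2)\subseteq\Gamma'$, each even-diagonal elementary element from the previous paragraph, which a commutator realizes modulo $\Sp_{2g}(\Z,L^3)\subseteq\Sp_{2g}(\Z,2L^2)$, lies honestly in $\Gamma'$; normal generation of $N$ by these elements together with $\Sp_{2g}(\Z,2L^2)$ then gives $N\subseteq\Gamma'$ by normality of $\Gamma'$. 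For the odd primes dividing $L$ the containment $\Sp_{2g}(\Z,2L^2)\subseteq\Gamma'$ descends the filtration exactly as in the odd-$L$ case (Theorem \ref{theorem:splabel}), where perfectness of $\SpLie_{2g}$ lets iterated commutators reach arbitrarily far; for the prime $2$ the even-diagonal identity, applied at each level, supplies the needed elements.

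The main obstacle is exactly the prime $2$. Establishing $\Sp_{2g}(\Z,2L^2)\subseteq\Gamma'$ cleanly requires controlling the $2$-adic part of the congruence filtration, where $\SpLie_{2g}$ fails to be perfect in precisely the even-diagonal fashion isolated above, and one must verify that this failure contributes nothing beyond the single $(\Z/2)^{2g}$ already visible at the top layer — so that the lower bound matches the upper bound on the nose. A convenient way to package the termination is to note, using that $\Gamma'$ is finite-index and normal, that the congruence subgroup property for $\Sp_{2g}(\Z)$ ($g\geq 2$) forces $\Gamma'$ to be a congruence subgroup; identifying \emph{which} one then reduces to the finite computation of its image in a suitable $\Sp_{2g}(\Z/M)$, which is supplied by the explicit even-diagonal commutators. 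I expect the careful $2$-adic bookkeeping needed to pin the image down to $N$, rather than something slightly larger or smaller, to be the hardest part of the argument.
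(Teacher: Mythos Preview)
The paper does not prove this theorem; it is stated with attribution to Sato \cite{Sato} and used as a black box, so there is no ``paper's own proof'' to compare against.

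As for your outline on its own merits: the overall architecture is sound. The inclusion $\Gamma'\subseteq N$ is indeed immediate from the abelianness of $\Sp_{2g}(\Z,L)/N$, and your identification of the derived subalgebra $[\SpLie_{2g}(\Z),\SpLie_{2g}(\Z)]$ with the even-diagonal sublattice is correct and is exactly the infinitesimal shadow of why the answer is $\Sp_{2g}(\Z,L^2,2L^2)$ rather than $\Sp_{2g}(\Z,L^2)$. The reduction to showing $\Sp_{2g}(\Z,2L^2)\subseteq\Gamma'$ is also a reasonable way to organize the remaining work.

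However, you yourself identify the gap: the $2$-adic descent is not carried out, and that is precisely where the content of the theorem lives. Your top-layer computation only gives $\Gamma'\cdot\Sp_{2g}(\Z,L^3)\supseteq N$, and promoting this to $\Gamma'\supseteq N$ requires genuinely controlling what happens in $\Sp_{2g}(\Z,L^3)/\Sp_{2g}(\Z,2L^2)$ and below. Invoking the congruence subgroup property tells you $\Gamma'$ is \emph{some} congruence subgroup, but does not by itself pin down the level; you still need the explicit layer-by-layer commutator computations at the prime $2$ to rule out a strictly smaller $\Gamma'$. Since you have not done this, what you have is a plausible plan rather than a proof, and the missing piece is not a routine verification but the heart of Sato's argument.
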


\begin{corollary}
\label{corollary:sato}
Fix $g \geq 3$ and $L \geq 2$ such that $L$ is even.  We then have a short exact sequence
$$0 \longrightarrow \HH_1(\Sigma_g;\Z/2) \longrightarrow \HH_1(\Sp_{2g}(\Z,L);\Z) \longrightarrow \SpLie_{2g}(\Z/L)
\longrightarrow 0$$
of $\Sp_{2g}(\Z)$-modules.
\end{corollary}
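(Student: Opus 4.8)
The plan is to deduce the corollary directly from Theorem \ref{theorem:sato} together with the exact sequence constructed immediately before it. The essential input — that the commutator subgroup of $\Sp_{2g}(\Z,L)$ is exactly $\Sp_{2g}(\Z,L^2,2L^2)$ — is precisely Sato's theorem, so what remains is a formal repackaging rather than any new computation.

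First I would recall that for any group $G$ one has $\HH_1(G;\Z) \cong G/[G,G]$. Applying this with $G = \Sp_{2g}(\Z,L)$ and invoking Theorem \ref{theorem:sato} to identify $[G,G] = \Sp_{2g}(\Z,L^2,2L^2)$ yields a natural isomorphism
$$\HH_1(\Sp_{2g}(\Z,L);\Z) \cong \Sp_{2g}(\Z,L) / \Sp_{2g}(\Z,L^2,2L^2).$$

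Next I would substitute this identification into the exact sequence
$$1 \longrightarrow \HH_1(\Sigma_g;\Z/2) \longrightarrow \Sp_{2g}(\Z,L)/\Sp_{2g}(\Z,L^2,2L^2) \longrightarrow \SpLie_{2g}(\Z/L) \longrightarrow 1$$
obtained earlier by quotienting \eqref{eqn:splsplie} by $\Sp_{2g}(\Z,L^2,2L^2)$ and applying Igusa's isomorphism \eqref{eqn:igusa}. Since the middle term was already observed to be abelian, rewriting it additively produces exactly the asserted short exact sequence.

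The only point requiring care — and the closest thing to an obstacle — is the $\Sp_{2g}(\Z)$-equivariance. Because $\Sp_{2g}(\Z,L)$ is normal in $\Sp_{2g}(\Z)$, the conjugation action of the latter preserves the commutator subgroup and hence descends to an action on the abelianization $\HH_1(\Sp_{2g}(\Z,L);\Z)$, which is the module structure named in the statement. All three subgroups appearing in the sequence are normal in $\Sp_{2g}(\Z)$, and both the homomorphism $\phi$ underlying \eqref{eqn:splsplie} and Igusa's isomorphism \eqref{eqn:igusa} were recorded as $\Sp_{2g}(\Z)$-equivariant; consequently every map in the displayed sequence is $\Sp_{2g}(\Z)$-equivariant, and the sequence is one of $\Sp_{2g}(\Z)$-modules as claimed.
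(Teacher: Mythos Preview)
Your proposal is correct and follows exactly the approach implicit in the paper: the corollary is stated without proof precisely because it follows immediately from Theorem~\ref{theorem:sato} together with the exact sequence and equivariance observations already recorded in the paragraph preceding that theorem. Your write-up simply makes explicit the formal identification $\HH_1(G;\Z)\cong G/[G,G]$ and the equivariance check, which is all that is needed.
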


\subsection{Four results about $\Sp_{2g}(\Z/L)$}
\label{section:decompthm}

We will need four results about $\Sp_{2g}(\Z/L)$, the first two of which are the following group cohomology calculations.

\begin{lemma}
\label{lemma:splperfect}
For $g \geq 3$ and $L \geq 2$, we have $\HH_1(\Sp_{2g}(\Z/L);\Z) = 0$.
\end{lemma}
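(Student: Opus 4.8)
The plan is to exploit the fact that $\HH_1(G;\Z)$ is naturally isomorphic to the abelianization $G/[G,G]$, so that the lemma is equivalent to the assertion that $\Sp_{2g}(\Z/L)$ is a perfect group. My strategy is to deduce this perfectness by realizing $\Sp_{2g}(\Z/L)$ as a quotient of a group that is already known to be perfect, thereby sidestepping any direct computation inside $\Sp_{2g}(\Z/L)$.

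Concretely, I would invoke the reduction-mod-$L$ homomorphism $\Sp_{2g}(\Z) \rightarrow \Sp_{2g}(\Z/L)$, which is surjective by \cite[Theorem 1]{NewmanSmart} (as recorded in the remark opening this section). Since any quotient of a perfect group is perfect, it suffices to show that $\Sp_{2g}(\Z)$ is perfect for $g \geq 3$, i.e.\ that $\HH_1(\Sp_{2g}(\Z);\Z) = 0$. This is a classical fact: the integral symplectic group is generated by symplectic transvections, and for $g \geq 2$ each such transvection lies in the commutator subgroup, so the abelianization vanishes. Combining these two inputs gives $\HH_1(\Sp_{2g}(\Z/L);\Z) = 0$ immediately, for every $L \geq 2$ and with no restriction of the form $4 \nmid L$.

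The only genuine ingredient is therefore the perfectness of $\Sp_{2g}(\Z)$ itself; the rest of the argument is formal, and in this clean route there is essentially no obstacle to overcome. For completeness I would note a self-contained alternative. Using the Chinese remainder theorem one has $\Sp_{2g}(\Z/L) \cong \prod_p \Sp_{2g}(\Z/p^{k_p})$, and since the abelianization of a finite direct product is the direct sum of the abelianizations, it is enough to treat each prime power $\Sp_{2g}(\Z/p^k)$ separately. The base case $k=1$ is the perfectness of $\Sp_{2g}(\Field_p)$ over a field, which holds for all $p$ once $2g \geq 6$ (so none of the exceptional small cases arise). For the inductive step $k-1 \rightarrow k$ one feeds the central extension $1 \rightarrow \SpLie_{2g}(\Z/p) \rightarrow \Sp_{2g}(\Z/p^k) \rightarrow \Sp_{2g}(\Z/p^{k-1}) \rightarrow 1$, furnished by Lemma \ref{lemma:sppquotient}, into the five-term exact sequence: perfectness of the quotient (by induction) identifies $\HH_1(\Sp_{2g}(\Z/p^k);\Z)$ with the cokernel of the transgression landing in the coinvariants $(\SpLie_{2g}(\Z/p))_{\Sp_{2g}(\Field_p)}$.

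In this alternative, the main obstacle is precisely these coinvariants. For $p$ odd they vanish: the module $\SpLie_{2g}(\Z/p)$ is self-dual, so the coinvariants have the same dimension as the invariants, and an invariant element of $\SpLie_{2g}(\Field_p)$ is a scalar matrix $\lambda \One_{2g}$ satisfying $2\lambda \Omega_g = 0$, forcing $\lambda = 0$. For $p = 2$ this breaks down — $\One_{2g}$ itself lies in $\SpLie_{2g}(\Field_2)$ and the adjoint module is no longer self-dual — so one would have to show that the transgression still surjects onto the (possibly nonzero) coinvariants. This $2$-primary subtlety is exactly what the quotient-of-a-perfect-group argument avoids, which is why I would present that argument as the primary proof.
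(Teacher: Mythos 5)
Your primary argument is exactly the paper's proof: $\Sp_{2g}(\Z/L)$ is a surjective image of $\Sp_{2g}(\Z)$, which is perfect for $g \geq 3$ (the paper cites this as Theorem \ref{theorem:hsp}), and perfectness passes to quotients. One small caution: your parenthetical claim that the abelianization of $\Sp_{2g}(\Z)$ already vanishes for $g \geq 2$ is false ($\HH_1(\Sp_4(\Z);\Z) \cong \Z/2$), but this does not affect the lemma, which only concerns $g \geq 3$.
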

\begin{proof}
The group $\Sp_{2g}(\Z/L)$ is the surjective image of $\Sp_{2g}(\Z)$, which satisfies
$\HH_1(\Sp_{2g}(\Z);\Z)=0$ (see Theorem \ref{theorem:hsp} below).  The lemma follows.
\end{proof}

\begin{theorem}[{M.\ Stein, \cite[Theorem 2.13 and Proposition 3.3.a]{Stein2}}]
\label{theorem:stein}
For $g \geq 3$ and $L \geq 2$ such that $4 \nmid L$, we have $\HH_2(\Sp_{2g}(\Z/L);\Z) = 0$.
\end{theorem}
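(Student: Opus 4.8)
The plan is to reduce to prime powers and then identify $\HH_2$ with a symplectic analogue of $K_2$ that can be computed directly over the rings $\Z/p^e$. The Chinese Remainder Theorem gives a ring isomorphism $\Z/L \cong \prod_i \Z/p_i^{e_i}$, and since $\Sp_{2g}$ commutes with finite products of rings this yields $\Sp_{2g}(\Z/L) \cong \prod_i \Sp_{2g}(\Z/p_i^{e_i})$. Each factor is perfect by Lemma \ref{lemma:splperfect}, so in the Künneth formula every term other than $\HH_2(G_1) \otimes \HH_0(G_2)$ and $\HH_0(G_1) \otimes \HH_2(G_2)$ involves an $\HH_1$ of some factor and hence vanishes, giving $\HH_2(\Sp_{2g}(\Z/L);\Z) \cong \bigoplus_i \HH_2(\Sp_{2g}(\Z/p_i^{e_i});\Z)$. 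Because $4 \nmid L$, the $2$-primary factor, if present, is exactly $\Z/2$, so it suffices to kill $\HH_2$ for $\Sp_{2g}(\Z/p^e)$ with $p$ odd and arbitrary $e \geq 1$, together with the single factor $\Sp_{2g}(\Z/2)$.

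I would then use that $\Sp_{2g}$ is the Chevalley group of type $C_g$ and that each $\Sp_{2g}(\Z/p^e)$ is perfect, so it admits a universal central extension whose kernel is exactly $\HH_2(\Sp_{2g}(\Z/p^e);\Z)$. For a group of type $C_g$ over a local ring, the Steinberg group generated by the root elements $x_\alpha(t)$ subject to the usual Steinberg relations is a central extension, and once the rank is large enough (here $g \geq 3$) a stability argument shows it is the universal one. This identifies $\HH_2(\Sp_{2g}(\Z/p^e);\Z)$ with the symplectic $K_2$-group $K_2(C_g,\Z/p^e)$, which over a local ring is generated by symplectic Steinberg symbols built from units together with a small number of symbols special to the $C_g$ root datum.

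The arithmetic of $\Z/p^e$ enters only at the level of these symbols. The symbols coming from the short roots reproduce the ordinary $K_2(\Z/p^e)$, which vanishes for every prime power by Dennis--Stein, while the long-root symbols are controlled by the behaviour of $\{-1,-1\}$-type relations. For $p$ odd every unit can be absorbed and all the symbols collapse to zero; for $p=2$ the one remaining symbol vanishes because the relation that would otherwise produce a nontrivial class is only visible modulo $4$. This is exactly where the hypothesis $4 \nmid L$ is used, and it is consistent with the author's remark that the answer genuinely changes once $4 \mid L$.

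I expect the main obstacle to be twofold. The technical heart is showing that the Steinberg group is \emph{centrally closed}, i.e.\ that the Steinberg relations already present the universal central extension with no surviving central class; this is the content of Stein's stability results, and it must be arranged uniformly in $e$ while allowing $p=2$ but forbidding $p^2 = 4$. Interwoven with this is the delicacy of the prime $2$: the self-duality used so freely for odd $p$ fails there, so the long-root symbols demand a separate hands-on computation, and even the base case $e=1$ hides the genuinely exceptional Schur multipliers of the small symplectic groups over $\Z/2$ (such as $\Sp_6(\Z/2)$), which must be isolated so that they do not contaminate the range in which the clean vanishing statement holds. It is this prime-$2$ analysis, rather than any formal extension of the odd-$p$ argument, that would have to be redone to drop the hypothesis $4 \nmid L$.
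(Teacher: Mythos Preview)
The paper does not prove this theorem at all: it is quoted as a black-box result of Stein, with only two remarks indicating the context (the relation to Milnor's $K_2$, Steinberg's earlier prime case, and the observation that removing the hypothesis $4 \nmid L$ would require knowing $\HH_2(\Sp_6(\Z/4);\Z)=0$). So there is no in-paper argument against which to compare your proposal.

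That said, your sketch is a faithful outline of the method behind the cited reference. The reduction to prime powers via the decomposition $\Sp_{2g}(\Z/L) \cong \bigoplus_i \Sp_{2g}(\Z/p_i^{e_i})$ is exactly the paper's Lemma~\ref{lemma:spldecomp}, and your K\"unneth argument using perfection (Lemma~\ref{lemma:splperfect}) is correct. Identifying the Schur multiplier with a symplectic $K_2$ via the Steinberg presentation, and then invoking stability in the rank to make the Steinberg group the universal central extension, is precisely what the citation ``Theorem~2.13 and Proposition~3.3.a'' of \cite{Stein2} accomplishes. The remaining symbol calculus over local rings $\Z/p^e$ is indeed where the work lies.

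One small correction to your last paragraph: the exceptional case that obstructs extending the theorem is not $\Sp_6(\Z/2)$ but $\Sp_6(\Z/4)$, as the paper's second remark after the theorem makes explicit. For $g \geq 3$ the groups $\Sp_{2g}(\Z/2)$ already have trivial Schur multiplier and are covered by the theorem; the difficulty at the prime $2$ appears only once $4 \mid L$. Your description of the long-root symbol at $p=2$ as ``only visible modulo $4$'' is morally right, but the phrasing suggesting that $\Sp_6(\Z/2)$ itself is exceptional is misleading.
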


\begin{remark}
For $L$ prime, Theorem \ref{theorem:stein} was originally proven by Steinberg \cite{SteinbergSchur}.  The calculations
in \cite{SteinbergSchur} and \cite{Stein2}, which deal with arbitrary Chevalley groups, are closely related to
Milnor's definition of $K_2$ of a ring.  In the general context of Chevalley groups, the condition $4 \nmid L$ is
necessary; indeed, for large $n$ we have $\HH_2(\SL_n(\Z/L);\Z)$ equal to $0$ if $4 \nmid L$ and equal
to $\Z/2$ if
$4 \mid L$.  This latter result is due to Dennis; see \cite[Corollary 10.8]{MilnorKTheory} and \cite[\S
12]{DennisSteinSurvey}.
\end{remark}

\begin{remark}
The results of \cite{Stein2} imply that to remove the condition $4 \nmid L$ from Theorem \ref{theorem:stein}, it would
be enough to prove that $\HH_2(\Sp_{6}(\Z/4);\Z) = 0$.  In theory this could be proven with a computer; however, the group
$\Sp_{6}(\Z/4)$ is quite large and complicated (it has cardinality {\tt 4106059776000}), making this computer calculation
impractical.
\end{remark}

The other two lemmas we need allow us to express $\Sp_{2g}(\Z/L)$ in terms of $\Sp_{2g}(\Z/p)$
for $p$ prime.  The first is the following, which is due to Newman and Smart.

\begin{lemma}[{Newman-Smart, \cite[Theorem 5]{NewmanSmart}}]
\label{lemma:spldecomp}
Fix $g \geq 1$ and $L \geq 2$.  Write $L = p_1^{k_1} \cdots p_n^{k_n}$,
where the $p_i$ are distinct primes.  We then have
$$\Sp_{2g}(\Z/L) \cong \bigoplus_{i=1}^n \Sp_{2g}(\Z/{p_i^{k_i}}).$$
\end{lemma}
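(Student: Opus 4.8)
The plan is to deduce the decomposition from the Chinese Remainder Theorem together with the fact that the construction $R \mapsto \Sp_{2g}(R)$ commutes with finite products of commutative rings. First I would recall that since the $p_i$ are distinct primes, the Chinese Remainder Theorem furnishes a ring isomorphism
$$\Z/L \cong \prod_{i=1}^n \Z/{p_i^{k_i}}.$$
It therefore suffices to prove the general statement that for any finite collection of commutative rings $R_1,\ldots,R_n$ with product $R = \prod_{i=1}^n R_i$, the natural map $\Sp_{2g}(R) \rightarrow \prod_{i=1}^n \Sp_{2g}(R_i)$ induced by the coordinate projections is an isomorphism of groups. For finitely many factors the direct product and direct sum coincide, so this is exactly the asserted decomposition.

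The key observation is that applying the ring isomorphism $R \cong \prod_i R_i$ entrywise yields a ring isomorphism $M_{2g}(R) \cong \prod_i M_{2g}(R_i)$ of matrix rings: addition, multiplication, and transposition of $2g \times 2g$ matrices are all given by formulas in the entries, so they respect the splitting of $R$ into its factors. Under this identification a matrix $X \in M_{2g}(R)$ corresponds to a tuple $(X_1,\ldots,X_n)$ with $X_i \in M_{2g}(R_i)$, and I would then verify that the defining relation $X^t \Omega_g X = \Omega_g$ holds in $M_{2g}(R)$ if and only if $X_i^t \Omega_g X_i = \Omega_g$ holds in each $M_{2g}(R_i)$. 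This is immediate once one notes that the entries $0$, $1$, and $-1$ of $\Omega_g = \MatTwoTwo{\Zero_g}{\One_g}{-\One_g}{\Zero_g}$ map under $R \cong \prod_i R_i$ to the tuples all of whose components are the corresponding element, so that $\Omega_g$ itself corresponds to $(\Omega_g,\ldots,\Omega_g)$; hence the single equation over $R$ splits into the $n$ equations over the factors $R_i$. This exhibits a bijection $\Sp_{2g}(R) \cong \prod_i \Sp_{2g}(R_i)$, and since the group operation is matrix multiplication, which is componentwise under the above identification, it is a group isomorphism. The isomorphism is manifestly natural, which is relevant when the lemma is combined with the equivariance statements elsewhere in the paper.

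There is no serious obstacle here: the entire content is the compatibility of the matrix operations and of the fixed form $\Omega_g$ with the product decomposition of the coefficient ring, all of which is routine. The only point deserving a sentence of care is the verification that transposition and the entries of $\Omega_g$ behave correctly under the identification $M_{2g}(R) \cong \prod_i M_{2g}(R_i)$, but this follows directly from the entrywise nature of that identification.
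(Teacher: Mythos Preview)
Your argument is correct: the Chinese Remainder Theorem gives a ring isomorphism $\Z/L \cong \prod_i \Z/p_i^{k_i}$, and since the symplectic condition $X^t \Omega_g X = \Omega_g$ is a system of polynomial equations in the matrix entries with integer coefficients, it holds over a product ring if and only if it holds in each factor. Note, however, that the paper does not supply its own proof of this lemma; it simply cites \cite[Theorem~5]{NewmanSmart}. Your proof is the standard elementary one and is entirely adequate---indeed, the result is a special case of the general fact that any affine group scheme (here $\Sp_{2g}$) takes finite products of rings to products of groups, which is precisely what your entrywise verification establishes in this concrete instance.
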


The second is as follows.

\begin{lemma}
\label{lemma:sppdecomp}
Fix $g \geq 3$, a prime $p$, and some $k \geq 1$.  We then have a short exact
sequence
\begin{equation}
\label{eqn:sppdecomp}
1 \longrightarrow \SpLie_{2g}(\Z/{p}) \longrightarrow \Sp_{2g}(\Z/{p^{k+1}}) \longrightarrow \Sp_{2g}(\Z/{p^{k}}) \longrightarrow 1.
\end{equation}
Moreover, if $p^k \geq 4$, then this exact sequence does not split.
\end{lemma}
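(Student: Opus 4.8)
The plan is to first produce the exact sequence \eqref{eqn:sppdecomp} and then establish non-splitting by computing the orders of the lifts of a single symplectic transvection. For the exact sequence, note that the reduction map $r \colon \Sp_{2g}(\Z/p^{k+1}) \rightarrow \Sp_{2g}(\Z/p^k)$ is surjective, since $\Sp_{2g}(\Z) \rightarrow \Sp_{2g}(\Z/p^{k+1})$ is surjective (see the remark at the start of this section) and the composite $\Sp_{2g}(\Z) \rightarrow \Sp_{2g}(\Z/p^{k+1}) \rightarrow \Sp_{2g}(\Z/p^k)$ is reduction. The kernel of $r$ consists of the matrices $\One_{2g} + p^k A$ with $A \in \SpLie_{2g}(\Z/p)$, which is exactly the image of $\Sp_{2g}(\Z,p^k)$ in $\Sp_{2g}(\Z/p^{k+1})$; since the kernel of $\Sp_{2g}(\Z,p^k) \rightarrow \Sp_{2g}(\Z/p^{k+1})$ is $\Sp_{2g}(\Z,p^{k+1})$, Lemma \ref{lemma:sppquotient} identifies $\Ker(r)$ with $\Sp_{2g}(\Z,p^k)/\Sp_{2g}(\Z,p^{k+1}) \cong \SpLie_{2g}(\Z/p)$. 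This step uses the hypothesis $g \geq 3$.

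For the non-splitting claim, suppose to the contrary that \eqref{eqn:sppdecomp} admits a section $s$. Fix a primitive vector $v$ and set $N = -v v^t \Omega_g$, so that $N^2 = 0$ (because $v^t \Omega_g v = 0$) and $\tau := \One_{2g} + N$ is a symplectic transvection. As $N^2 = 0$ we have $\tau^m = \One_{2g} + m N$, so the reduction $\bar{\tau} \in \Sp_{2g}(\Z/p^k)$ has order exactly $p^k$ (here $N \not\equiv 0$ modulo $p$, as $v$ is primitive). The restriction of $s$ to $\langle \bar{\tau} \rangle$ would then furnish a lift of $\bar{\tau}$ to $\Sp_{2g}(\Z/p^{k+1})$ of order $p^k$, so it suffices to show that every lift of $\bar{\tau}$ has order strictly larger than $p^k$.

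Every lift of $\bar{\tau}$ has the form $y = \One_{2g} + N + p^k C$ for some matrix $C$. Using $N^2 = 0$ one checks that $(N + p^k C)^j \equiv 0 \pmod{p^{k+1}}$ for $j \geq 4$, and expanding $y^{p^k}$ with the binomial theorem gives
$$y^{p^k} \equiv \One_{2g} + p^k N + p^k \binom{p^k}{2}(NC+CN) + p^k \binom{p^k}{3} NCN \pmod{p^{k+1}}.$$
The crux is a $p$-adic valuation count for the two binomial coefficients: for every pair $(p,k) \notin \{(2,1),(3,1)\}$ one has $p^k \binom{p^k}{2} \equiv 0$ and $p^k \binom{p^k}{3} \equiv 0 \pmod{p^{k+1}}$, the single surviving correction term in the excluded case $(2,1)$ being the first and in $(3,1)$ being the second. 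Hence in all remaining cases $y^{p^k} \equiv \One_{2g} + p^k N \not\equiv \One_{2g}$, since $p^k N \not\equiv 0 \pmod{p^{k+1}}$; thus no lift of $\bar{\tau}$ has order $p^k$, contradicting the existence of $s$. I expect the main obstacle to be precisely this valuation bookkeeping: one must verify that the two exceptional pairs are exactly those for which a surviving correction term can cancel $p^k N$, and that the vanishing in all other cases is uniform in $C$ (so that no choice of lift can rescue order $p^k$).
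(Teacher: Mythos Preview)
Your proposal is correct and follows essentially the same route as the paper. Both arguments obtain the exact sequence by quotienting $1 \to \Sp_{2g}(\Z,p^k) \to \Sp_{2g}(\Z) \to \Sp_{2g}(\Z/p^k) \to 1$ by $\Sp_{2g}(\Z,p^{k+1})$ and invoking Lemma~\ref{lemma:sppquotient}, and both prove non-splitting by lifting a transvection of order $p^k$ and showing every lift has larger order via the binomial expansion, using $N^2=0$ to kill $(N+p^kC)^j$ for $j\ge 4$ and then checking that $p\mid\binom{p^k}{2}$ and $p\mid\binom{p^k}{3}$ precisely when $(p,k)\notin\{(2,1),(3,1)\}$; the paper simply takes the specific transvection $\One+E_{1,g+1}$ in place of your general $\One - vv^t\Omega_g$, and phrases the divisibility as the single observation that $p\mid\binom{p^k}{n}$ for $0<n<p^k$ once $p^k\ge 4$.
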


\begin{remark}
The condition $p^k \geq 4$ is not necessary, but the indicated result is
sufficient for our purposes and has an easier proof.
\end{remark}

\begin{proof}[Proof of Lemma \ref{lemma:sppdecomp}]
By Theorem \ref{theorem:splquotient}, we can obtain \eqref{eqn:sppdecomp} by quotienting
the exact sequence
$$1 \longrightarrow \Sp_{2g}(\Z,p^k) \longrightarrow \Sp_{2g}(\Z) \longrightarrow \Sp_{2g}(\Z/{p^k}) \longrightarrow 1,$$
by $\Sp_{2g}(\Z,p^{k+1})$.  We must only prove that \eqref{eqn:sppdecomp} does not split if $p^k \geq 4$.

Assume that \eqref{eqn:sppdecomp} does split, and let $\varphi : \Sp_{2g}(\Z/{p^k}) \rightarrow \Sp_{2g}(\Z/{p^{k+1}})$ be
a splitting.  For an integer $L$, we will during this proof denote by $\One_{L}$ and $\Zero_{L}$ 
the $2g \times 2g$ identity and zero matrices with entries in $\Z/L$, respectively (in the rest of this paper, the subscripts
refer to the dimensions of the matrices).  Also, we will
denote by $E_{L}$ the $2g \times 2g$ matrix with entries in $\Z/L$ with a $1$ at position
$(1,g+1)$ and $0$'s elsewhere.  Observe that $\One_{p^k} + E_{p^k} \in \Sp_{2g}(\Z/{p^k})$ and that
$(\One_{p^k} + E_{p^k})^{p^k} = 1$.  There exists a matrix $A$ with entries in $\Z/{p^{k+1}}$
such that $\varphi(\One_{p^k} + E_{p^k}) = \One_{p^{k+1}} + E_{p^{k+1}} + p^{k} A$.  Observe that
\begin{align*}
\One_{p^{k+1}} &= (\One_{p^{k+1}} + E_{p^{k+1}} + p^{k} A)^{p^k} \\
&= \One_{p^{k+1}} + p^{k}(E_{p^{k+1}} + p^{k} A) + \binom{p^{k}}{2} (E_{p^{k+1}} + p^{k} A)^2 + \cdots + (E_{p^{k+1}} + p^{k} A)^{p^k}.
\end{align*}
Since $p^{k}(E_{p^{k+1}} + p^{k} A) = p^k E_{p^{k+1}} \neq 0$, we will obtain a contradiction
if we show that all the higher order terms in this sum vanish.  

Consider $\binom{p^k}{n}(E_{p^{k+1}} + p^{k} A)^n$ for some $2 \leq n \leq p^k$.  We can expand
out $(E_{p^{k+1}} + p^{k} A)^n$ as a sum of terms each of which for some $0 \leq m \leq n$ 
is a product of $m$ factors of $E_{p^{k+1}}$ and
$(n-m)$ factors of $p^{k} A$, taken in some order.  If one of these terms contains two factors of $p^{k} A$, then
it must vanish.  Also, since $E_{p^{k+1}}^2 = 0$, if one of these terms contains two adjacent factors of
$E_{p^{k+1}}$, then it must vanish.  If $n \geq 4$, then we conclude that $(E_{p^{k+1}} + p^{k} A)^n = 0$.  If
$2 \leq n \leq 3$, then we might not have $(E_{p^{k+1}} + p^{k} A)^n = 0$, but each nonzero term
in it must contain a factor of $p^k A$, so we deduce that $(E_{p^{k+1}} + p^{k} A)^n$ is a multiple of $p^k$.  Since
$p^k \geq 4$, it follows that $2 \leq n < p^k$.  We then have
that $\binom{p^k}{n}$ is divisible by $p$, so $\binom{p^k}{n}(E_{p^{k+1}} + p^{k} A)^n$ is a multiple of $p^{k+1}$; i.e.\
it vanishes.
\end{proof}

\section{The twisted first homology groups of $\Sp_{2g}(\Z/L)$}
\label{section:twistedcoefficients}

This section has three subsections.  In \S \ref{section:hcoho}, we show that 
$\HH_1(\Sp_{2g}(\Z/L);\HH_1(\Sigma_g;\Z/2)) \cong \Z/2$ for $g \geq 2$ and $L$ even with $4 \nmid L$, 
slightly generalizing a result of Pollatsek \cite{Pollatsek}.  
In \S \ref{section:adjointcohoproof}, we prove Theorem \ref{theorem:adjointcoho}, which asserts
that $\HH_1(\Sp_{2g}(\Z/L);\SpLie_{2g}(\Z/L)) = 0$ for $g \geq 3$ and $L \geq 2$ such that $4 \nmid L$.  This
is preceded by \S \ref{section:adjointcohoprelim}, where a number of preliminary results are collected.

\subsection{The first homology group of $\Sp_{2g}(\Z/L)$ with coefficients $\HH_1(\Sigma_g;\Z/2)$}
\label{section:hcoho}

Assume that $G$ is a group and $M$ is a $G$-vector space over a field of characteristic not equal to $2$.  Furthermore,
assume that there is some element in the center of $G$ that acts by $-1$ on $M$.  It is an easy exercise in elementary
group cohomology (whose details we leave to the reader since this result is only needed for motivation) 
that under these conditions we have $\HH^1(G;M) = 0$.
In particular, if $p$ is an odd prime, then this applies to the action of $\Sp_{2g}(\Z/p)$ on $\HH_1(\Sigma_g;\Z/p)$,
since we have $-\One_{2g}$ in the center of $\Sp_{2g}(\Z/p)$.

For $p = 2$, this fails, and in fact Pollatsek \cite{Pollatsek} showed that 
$\HH^1(\Sp_{2g}(\Z/2);\HH_1(\Sigma_g;\Z/2)) \cong \Z/2$ for $g \geq 2$.  The following theorem slightly extends this.

\begin{theorem}
\label{theorem:hcoho}
For $g \geq 3$ and $L \geq 2$ such that $2 \mid L$ and $4 \nmid L$, 
we have $\HH_1(\Sp_{2g}(\Z/L);\HH_1(\Sigma_g;\Z/2)) \cong \Z/2$.
\end{theorem}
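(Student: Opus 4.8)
The plan is to reduce the general case to the prime‑power case $L = 2$ treated by Pollatsek, and then to convert Pollatsek's cohomological statement into the desired homological one by duality. The two inputs I would lean on are the Newman–Smart decomposition (Lemma \ref{lemma:spldecomp}) together with the splitting lemma for direct summands acting trivially (Lemma \ref{lemma:killchunk}), and the twisted homology–cohomology duality (Lemma \ref{lemma:duality}).

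First I would carry out the reduction. Since $2 \mid L$ but $4 \nmid L$, I may write $L = 2M$ with $M$ odd, so Lemma \ref{lemma:spldecomp} gives $\Sp_{2g}(\Z/L) \cong \Sp_{2g}(\Z/2) \oplus \Sp_{2g}(\Z/M)$. The action of $\Sp_{2g}(\Z/L)$ on $\HH_1(\Sigma_g;\Z/2)$ factors through the reduction homomorphism $\Sp_{2g}(\Z/L) \rightarrow \Sp_{2g}(\Z/2)$, and under the Chinese Remainder Theorem isomorphism $\Z/L \cong \Z/2 \times \Z/M$ this reduction is exactly projection onto the first summand; hence the factor $\Sp_{2g}(\Z/M)$ acts trivially on $\HH_1(\Sigma_g;\Z/2)$. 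When $M > 1$, Lemma \ref{lemma:splperfect} gives $\HH_1(\Sp_{2g}(\Z/M);\Z) = 0$, so Lemma \ref{lemma:killchunk} (applied with $G_1 = \Sp_{2g}(\Z/2)$ and $G_2 = \Sp_{2g}(\Z/M)$) yields
$$\HH_1(\Sp_{2g}(\Z/L);\HH_1(\Sigma_g;\Z/2)) \cong \HH_1(\Sp_{2g}(\Z/2);\HH_1(\Sigma_g;\Z/2)).$$
When $L = 2$ the factor $\Sp_{2g}(\Z/M)$ is absent and this reduction is vacuous, so in all cases it suffices to compute the right‑hand side.

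It remains to identify $\HH_1(\Sp_{2g}(\Z/2);\HH_1(\Sigma_g;\Z/2))$. Pollatsek \cite{Pollatsek} supplies the cohomological computation $\HH^1(\Sp_{2g}(\Z/2);\HH_1(\Sigma_g;\Z/2)) \cong \Z/2$ for $g \geq 2$, so I would pass from cohomology to homology. Writing $M = \HH_1(\Sigma_g;\Z/2)$, the mod‑$2$ intersection pairing is a nondegenerate $\Sp_{2g}(\Z/2)$‑invariant form, and sending $v$ to $i(v,\cdot)$ gives an $\Sp_{2g}(\Z/2)$‑equivariant isomorphism $M \cong M^{\ast}$, where $M^{\ast} = \Hom(M,\Q/\Z) \cong \Hom(M,\Z/2)$. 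By Lemma \ref{lemma:duality},
$$\Z/2 \cong \HH^1(\Sp_{2g}(\Z/2);M) \cong \HH^1(\Sp_{2g}(\Z/2);M^{\ast}) \cong (\HH_1(\Sp_{2g}(\Z/2);M))^{\ast}.$$
Since $\HH_1(\Sp_{2g}(\Z/2);M)$ is a finite $\Z/2$‑vector space, it is abstractly isomorphic to its own dual $(\cdot)^{\ast}$, so it is likewise isomorphic to $\Z/2$, which completes the proof.

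The argument is essentially formal once Pollatsek's theorem is granted; the two places requiring care are verifying that the mod‑$2$ reduction really is the projection onto the $\Sp_{2g}(\Z/2)$ summand (so that the odd part genuinely acts trivially and Lemma \ref{lemma:killchunk} applies), and the self‑duality bookkeeping that converts Pollatsek's $\HH^1$ into $\HH_1$. I do not expect a serious obstacle here, since all the real work is contained in the cited input of Pollatsek and the structural lemmas established earlier.
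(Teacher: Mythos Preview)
Your proof is correct and follows essentially the same approach as the paper: both use the Newman--Smart decomposition together with Lemma~\ref{lemma:killchunk} to reduce to $L=2$, the self-duality of $\HH_1(\Sigma_g;\Z/2)$ via the intersection form together with Lemma~\ref{lemma:duality} to pass between homology and cohomology, and Pollatsek's theorem for the final identification. The only difference is the order of operations (the paper dualizes first and then reduces to $L=2$, whereas you reduce first and then dualize), and you are slightly more careful about the edge case $L=2$ where the odd factor is trivial.
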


\begin{remark}
Computer calculations indicate that the condition $4 \nmid L$ is probably unnecessary.
\end{remark}

\begin{proof}[{Proof of Theorem \ref{theorem:hcoho}}]
The algebraic intersection form is a nondegenerate pairing on $\HH_1(\Sigma_g;\Z/2)$, so we have an isomorphism
$\HH_1(\Sigma_g;\Z/2) \cong (\HH_1(\Sigma_g;\Z/2))^{\ast}$
of $\Sp_{2g}(\Z/L)$-modules.  Using Lemma \ref{lemma:duality},
this implies that it is enough to prove that $\HH^1(\Sp_{2g}(\Z/L);\HH_1(\Sigma_g;\Z/2)) \cong \Z/2$.  Set $L'=L/2$.  By
Lemma \ref{lemma:spldecomp}, we have $\Sp_{2g}(\Z/L) \cong \Sp_{2g}(\Z/2) \oplus \Sp_{2g}(\Z/{L'})$ (this is
where we use the fact that $4 \nmid L$).  The action of $\Sp_{2g}(\Z/L)$ on $\HH_1(\Sigma_g;\Z/2)$
factors through $\Sp_{2g}(\Z/2)$.  The subgroup $\Sp_{2g}(\Z/{L'})$ of $\Sp_{2g}(\Z/L)$ is
the kernel of the projection $\Sp_{2g}(\Z/L) \rightarrow \Sp_{2g}(\Z/2)$, so $\Sp_{2g}(\Z/{L'})$ 
acts trivially on $\HH_1(\Sigma_g;\Z/2)$.  Lemma \ref{lemma:splperfect} says that 
$\HH_1(\Sp_{2g}(\Z/{L'});\Z) = 0$.  We can thus apply Lemma \ref{lemma:killchunk}
to deduce that
$\HH^1(\Sp_{2g}(\Z/L);\HH_1(\Sigma_g;\Z/2)) \cong \HH^1(\Sp_{2g}(\Z/2);\HH_1(\Sigma_g;\Z/2))$.
Pollatsek's theorem \cite{Pollatsek} says that this is $\Z/2$, and the result follows.
\end{proof}

\subsection{Some preliminaries for Theorem \ref{theorem:adjointcoho}}
\label{section:adjointcohoprelim}

\subsubsection{The transgression in the cohomology Hochschild-Serre spectral sequence}
In \cite{Huebschmann}, Huebschmann gives a recipe for computing the first
transgression in the Hochschild-Serre spectral sequence.  We will need a very special
case of this theorem.

Consider a short exact sequence of groups
\begin{equation}
\label{eqn:huebshort}
1 \longrightarrow A \longrightarrow G \longrightarrow Q \longrightarrow 1,
\end{equation}
where $A$ is abelian.  Regarding $A$ as a $G$-module, there is an associated Hochschild-Serre
spectral sequence converging to $\HH^{\ast}(G;A)$.  The first transgression
of this spectral sequence is the differential
$$d : \HH^0(Q;\HH^1(A;A)) \longrightarrow \HH^2(Q;\HH^0(A;A)).$$
Now, $\HH^0(Q;\HH^1(A;A)) \cong \Hom_Q(A,A)$
contains a distinguished element, namely, the identity homomorphism $i : A \rightarrow A$.  There
is thus a canonical element $d(i) \in \HH^2(Q;\HH^0(A;A)) \cong \HH^2(Q;A)$ associated to the extension \eqref{eqn:huebshort}.
The Euler class of \eqref{eqn:huebshort} provides another canonical element of $\HH^2(Q;A)$.
Huebschmann proved that these two elements are equal.

\begin{theorem}[{Huebschmann, \cite[Theorem 6]{Huebschmann}}]
\label{theorem:transgression}
Up to signs, $d(i) \in \HH^2(Q;A)$ is the Euler class of the extension \eqref{eqn:huebshort}.  In
particular, if \eqref{eqn:huebshort} is a nontrivial extension, then $d(i)$ is nonzero.
\end{theorem}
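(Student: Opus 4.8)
The plan is to bypass the full strength of Huebschmann's theorem and instead compute the transgression $d_2 \colon E_2^{0,1} \to E_2^{2,0}$ directly on cochains, identifying the output with the factor set of \eqref{eqn:huebshort}. Since $A$ is abelian it acts trivially on itself, so the $G$-module structure on the coefficients $A$ factors through $Q$; I will write $x \cdot a$ for the resulting $Q$-action. First I would fix a set-theoretic section $s \colon Q \to G$ normalized by $s(1) = 1$, so that every $g \in G$ has a unique expression $g = a \cdot s(x)$ with $a \in A$ and $x$ the image of $g$ in $Q$. The deviation of $s$ from a homomorphism is the factor set $f \colon Q \times Q \to A$ defined by $s(x_1)s(x_2) = f(x_1,x_2)\, s(x_1 x_2)$; by definition $[f] \in \HH^2(Q;A)$ is, up to the usual sign, the Euler class of \eqref{eqn:huebshort}, and $[f] = 0$ if and only if the extension splits.

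Next I would exhibit a cochain lift of $i$ and apply the coboundary. The identity endomorphism, viewed as a $1$-cocycle on $A$ with values in $A$, is the homomorphism $a \mapsto a$ (a cocycle because $A$ acts trivially), and it is visibly $Q$-invariant, so it represents the distinguished class in $E_2^{0,1} \cong \Hom_Q(A,A)$. Using the section I would extend it to the $1$-cochain $\tilde i \colon G \to A$, $\tilde i(a\,s(x)) = a$, which restricts to $i$ on $A$. The transgression of a class in $E_2^{0,1}$ is computed by lifting a representing cocycle to a cochain on $G$ and applying the group-cohomology coboundary; carrying this out for $g_j = a_j s(x_j)$ and using $s(x_1) a_2 s(x_1)^{-1} = x_1 \cdot a_2$ together with the definition of $f$, I expect to find
$$(d\tilde i)(g_1,g_2) = x_1 \cdot a_2 - \bigl(a_1 + x_1 \cdot a_2 + f(x_1,x_2)\bigr) + a_1 = -f(x_1,x_2).$$
Thus $d\tilde i$ is inflated from $Q$ and equals $-f$, which is precisely the assertion that $d(i) = -[f]$ in $E_2^{2,0} \cong \HH^2(Q;A)$; that is, $d(i)$ is the Euler class up to sign. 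The final claim then follows at once, since a nontrivial (nonsplit) extension has $[f] \neq 0$.

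The steps I would treat as routine are the filtration bookkeeping in the Hochschild-Serre spectral sequence: that $d_2$ really is the transgression appearing in the cohomological inflation-restriction (five-term) exact sequence and is defined on all of $E_2^{0,1}$ (which receives no incoming $d_2$), that the cochain $d\tilde i$, being inflated from $Q$, sits in the correct filtration level so as to represent a class in $E_2^{2,0}$, and that this class is independent of the choices of $s$ and of the lift. The one genuinely delicate point is the comparison of sign conventions among the factor set, the Euler class, and the spectral-sequence differential --- this is exactly why the theorem is only claimed ``up to signs,'' and I would not try to fix the sign beyond that.
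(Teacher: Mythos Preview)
Your proof is correct. The cochain computation is right: with $\tilde i(a\,s(x)) = a$ and the standard inhomogeneous coboundary, one gets $(d\tilde i)(g_1,g_2) = -f(x_1,x_2)$, which is inflated from $Q$ and represents the transgression of $i$. The ``routine'' points you flag (that $d_2$ on $E_2^{0,1}$ coincides with the transgression in the five-term sequence, and that the class is independent of the section and lift) are indeed standard and appear in many textbook treatments of the Hochschild--Serre spectral sequence.

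As for comparison: the paper does not give its own proof of this statement at all. It simply quotes Huebschmann's \cite[Theorem 6]{Huebschmann} and remarks that in the special case at hand (kernel $A$ abelian, coefficients equal to $A$ itself) Huebschmann's general result reduces to the stated one. Your approach is therefore genuinely different in spirit: rather than invoking the general eight-term exact sequence and crossed-pair machinery of \cite{Huebschmann}, you exploit the special features of this situation---$A$ abelian acting trivially on itself, the identity endomorphism as a canonical cocycle---to compute the transgression by hand. This buys you a self-contained, elementary argument that avoids external references, at the cost of not illuminating the more general phenomenon. For the purposes of this paper, which only needs the special case to feed into Step~2 of the proof of Theorem~\ref{theorem:adjointcoho}, your direct argument is entirely adequate and arguably preferable.
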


\begin{remark}
In fact, Huebschmann proves a much more general result than Theorem \ref{theorem:transgression},
but in our special case it is easy to see that Huebschmann's theorem reduces to Theorem \ref{theorem:transgression}.
\end{remark}

\subsubsection{The symplectic Lie algebra}
\label{section:symplecticlie}

We will need two facts about $\SpLie_{2g}(\Z/p)$.  They are both analogues of familiar
results in characteristic $0$, but their proofs are somewhat different.
The first is the following.  

\begin{proposition}
\label{proposition:spliehomo}
For $g \geq 2$ and $p$ an odd prime, the group
$\Hom_{\Sp_{2g}(\Z/p)}(\SpLie_{2g}(\Z/p),\SpLie_{2g}(\Z/p))$ is cyclic of order
$p$ and generated by the identity.
\end{proposition}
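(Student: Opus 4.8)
The plan is to show that the obvious inclusion $(\Z/p)\cdot\mathrm{id} \hookrightarrow \Hom_{\Sp_{2g}(\Z/p)}(\SpLie_{2g}(\Z/p),\SpLie_{2g}(\Z/p))$ is surjective; since $\mathrm{id}\neq 0$ and $p\cdot\mathrm{id}=0$, this yields the asserted cyclic group of order $p$ generated by the identity. Fix the long root vector $v=E_{1,g+1}$, the matrix in $\SpLie_{2g}(\Z/p)$ with a single $1$ in position $(1,g+1)$. The argument has two halves. First I would prove that $v$ generates $\SpLie_{2g}(\Z/p)$ as an $\Sp_{2g}(\Z/p)$-module, so that any equivariant $\phi$ is determined by $\phi(v)$ and it suffices to show $\phi(v)\in(\Z/p)v$. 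Second, I would show that equivariance indeed forces $\phi(v)\in(\Z/p)v$.

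For the generation statement I would work inside the span of the conjugates $\{g v g^{-1}\}$. Conjugating $v$ by the opposite transvection $\One_{2g}+sE_{g+1,1}\in\Sp_{2g}(\Z/p)$ and expanding the (nilpotent) adjoint action, using $p$ odd to invert $2$, produces the Cartan element $H_1=E_{11}-E_{g+1,g+1}$ and the opposite long root vector $E_{g+1,1}$; conjugating by the signed permutation matrices (Weyl group elements of $\Sp_{2g}$) moves these to the $H_i$ and to all the other long root vectors $E_{i,g+i},E_{g+i,i}$; and conjugating $v$ by a root subgroup element mixing two indices produces the short root vectors $E_{i,g+j}+E_{j,g+i}$ and their transposes. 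Together these span $\SpLie_{2g}(\Z/p)$, so $v$ generates.

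The heart of the matter is showing $\phi(v)\in(\Z/p)v$. Writing $Y=\phi(v)$, I would exploit three features of the stabilizer of $v$. Equivariance under the diagonal torus $T\cong((\Z/p)^{\times})^g$ forces $Y$ to transform by the same character as $v$, namely $(t_1,\dots,t_g)\mapsto t_1^2$; decomposing $Y$ into root spaces, only roots $\gamma$ with $\gamma_1\equiv 2$ and $\gamma_i\equiv 0\pmod{p-1}$ for $i\geq 2$ survive. For $p\geq 7$ this isolates the single root $2e_1$ and gives $Y\in(\Z/p)v$ at once. I expect the main obstacle to be precisely the small primes $p\in\{3,5\}$, where the finite torus fails to separate the roots $\pm 2e_i$ (and, for $p=3$, even the zero weight space), so that a priori $Y$ acquires extra components. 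I would clear these with two further constraints: invariance of $v$ under the root subgroup generated by $\One_{2g}+sE_{1,g+1}$ gives $\adj(E_{1,g+1})(Y)=0$, which kills the $-2e_1$ component; and invariance of $v$ under the standard copy of $\Sp_{2g-2}(\Z/p)$ acting on the symplectic pairs indexed by $\{2,\dots,g\}$ forces the part of $Y$ supported on those indices to be an $\Sp_{2g-2}(\Z/p)$-invariant element of $\SpLie_{2g-2}(\Z/p)$, hence $0$. Here I use $g\geq 2$ and that $\SpLie$ has no nonzero conjugation-invariants for $p$ odd: the invariants are the centralizer of $\Sp_{2g-2}(\Z/p)\subset\GL_{2g-2}(\Z/p)$ intersected with $\SpLie$, which by Schur (the natural module is irreducible) is the scalars intersected with $\SpLie$, namely $0$.

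Combining these, $\phi(v)=cv$ for some $c\in\Z/p$, and since $v$ generates the module, $\phi=c\cdot\mathrm{id}$. This identifies $\Hom_{\Sp_{2g}(\Z/p)}(\SpLie_{2g}(\Z/p),\SpLie_{2g}(\Z/p))$ with $(\Z/p)\cdot\mathrm{id}$, cyclic of order $p$ generated by the identity. The only genuinely delicate bookkeeping is the $p=3$ case of the second half—tracking which long-root and Cartan components of $Y$ survive the torus constraint and checking that they are cleared by the two invariance conditions above—while the generation step and the large-prime case are direct root-space computations valid uniformly for odd $p$.
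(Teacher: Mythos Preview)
Your argument is correct and genuinely different from the paper's. The paper proceeds by base-changing to the algebraic closure: it invokes Theorem \ref{theorem:splieirred}, which says that $\SpLie_{2g}(\ClosedField_p)$ is irreducible as an $\Sp_{2g}(\Z/p)$-module (a nontrivial fact assembled from results of Jacobson, Curtis, and Borel on Chevalley groups), and then applies Schur's lemma over $\ClosedField_p$ to conclude that any equivariant endomorphism is a scalar, necessarily lying in $\Z/p$. Your approach stays entirely over $\Z/p$ and is more elementary in its prerequisites: you never need absolute irreducibility, only the explicit action of the torus, a single unipotent root subgroup, and the embedded $\Sp_{2g-2}(\Z/p)$ on a generating long root vector. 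The tradeoff is that your route requires honest case analysis at $p=3$ and $p=5$ where the torus fails to separate weights, whereas the paper's argument is uniform in $p$; on the other hand, your proof avoids importing the machinery behind Theorem \ref{theorem:splieirred} and would be self-contained in a paper that did not already need that irreducibility result elsewhere.
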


For the proof of Proposition \ref{proposition:spliehomo}, we will need the following theorem.
In it, we denote by $\ClosedField_p$ the algebraic closure of the field with $p$ elements.

\begin{theorem}
\label{theorem:splieirred}
For $g \geq 1$ and $p$ an odd prime, $\SpLie_{2g}(\ClosedField_p)$ is an irreducible $\Sp_{2g}(\Z/p)$-module.
\end{theorem}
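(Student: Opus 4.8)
The plan is to realize $\SpLie_{2g}(\ClosedField_p)$ concretely through symplectic transvections and to show that every nonzero $\Sp_{2g}(\Z/p)$-submodule already contains such a transvection, whence it is everything. For $v \in (\Z/p)^{2g}$ set $N_v = -v v^t \Omega_g$; a direct check using $v^t \Omega_g v = 0$ shows that $N_v \in \SpLie_{2g}(\Z/p)$, that $N_v^2 = 0$, and that $T_v := \One_{2g} + N_v$ is the symplectic transvection along $v$, an element of $\Sp_{2g}(\Z/p)$ (we will only use that $T_v$ lies in the group, not that the transvections generate it). First I would record two elementary facts. (i) The linear isomorphism $A \mapsto \Omega_g A$ identifies $\SpLie_{2g}$ with the space of symmetric matrices and carries $N_v$ to the rank-one symmetric matrix $(\Omega_g v)(\Omega_g v)^t$; since such rank-one matrices with $v \in (\Z/p)^{2g}$ span the symmetric matrices over $\ClosedField_p$, the elements $\{N_v\}$ span $\SpLie_{2g}(\ClosedField_p)$. (ii) A short computation with the symplectic relation gives $g N_v g^{-1} = N_{gv}$ for $g \in \Sp_{2g}(\Z/p)$, so conjugation permutes the $N_v$ exactly as $\Sp_{2g}(\Z/p)$ permutes the vectors $v$.

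The heart of the argument is an explicit second-order transvection identity. Writing $\delta_v(A) = T_v A T_v^{-1} - A$ for the conjugation operator attached to $T_v$, I would compute, using $N_v^2 = 0$ and $v^t \Omega_g v = 0$, that
$$\delta_v(A) = [N_v, A] + (v^t \Omega_g A v)\, N_v \qquad \text{and} \qquad \delta_v^2(A) = 2\,(v^t \Omega_g A v)\, N_v.$$
The point is that a single iteration leaves a commutator term, but a second iteration annihilates it and produces an honest multiple of the rank-one element $N_v$. Since $p$ is odd the coefficient $2$ is a unit, so whenever $v^t \Omega_g A v \neq 0$ we can extract a nonzero multiple of $N_v$ from $A$.

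Now let $W \subseteq \SpLie_{2g}(\ClosedField_p)$ be a nonzero submodule and $0 \neq A \in W$. The crucial rationality observation is that although $A$ has entries in $\ClosedField_p$, the symmetric matrix $\Omega_g A$ is nonzero, and over a field of characteristic $\neq 2$ a nonzero symmetric matrix has nonvanishing associated quadratic form already on the prime-field points: evaluating on the $e_i$ and on the $e_i + e_j$ detects every entry because $2$ is invertible. Hence there is $v \in (\Z/p)^{2g}$ with $v^t \Omega_g A v \neq 0$, and then $\delta_v^2(A) = 2(v^t \Omega_g A v) N_v \in W$ is a nonzero scalar multiple of $N_v$, so $N_v \in W$. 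Applying (ii) together with the transitivity of $\Sp_{2g}(\Z/p)$ on the nonzero vectors of $(\Z/p)^{2g}$ (Witt's theorem) shows $N_w \in W$ for every nonzero $w$, and then (i) forces $W = \SpLie_{2g}(\ClosedField_p)$.

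I expect the main obstacle to be verifying the two-step identity for $\delta_v^2$ and, just as importantly, pinning down the rationality step so that the vector $v$ can be chosen in the prime field $\Z/p$ rather than merely in $\ClosedField_p$: this is exactly what allows the finite group $\Sp_{2g}(\Z/p)$ to act transitively enough to finish the argument. It is also where the hypothesis that $p$ is odd enters twice, namely in the unit $2$ appearing in $\delta_v^2$ and in recovering $\Omega_g A$ from the values of its quadratic form on $\Z/p$-points.
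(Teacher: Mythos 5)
Your argument is correct, and it is a genuinely different route from the paper's. The paper does not prove this statement from scratch: it quotes Jacobson for the simplicity of the Lie algebra $\SpLie_{2g}(\ClosedField_p)$ for $p$ odd, invokes the standard theory of algebraic groups to upgrade this to irreducibility of the adjoint representation of $\Sp_{2g}(\ClosedField_p)$, and then applies Curtis's theorem to conclude that irreducibility survives restriction to the finite subgroup $\Sp_{2g}(\Z/p)$. You instead give a self-contained computation: the identity $\delta_v^2(A) = 2\,(v^t \Omega_g A v)\, N_v$ checks out (the cross terms vanish because $v^t \Omega_g N_v = 0$ and $N_v v = 0$, and $\delta_v(N_v)=0$), the identification $A \mapsto \Omega_g A$ with symmetric matrices correctly shows both that the $N_v$ with $v$ in the prime field span over $\ClosedField_p$ and that the quadratic form $v \mapsto v^t \Omega_g A v$ of a nonzero $A$ cannot vanish on all of $\{e_i\} \cup \{e_i + e_j\}$ when $2$ is invertible, and the conjugation formula $g N_v g^{-1} = N_{gv}$ plus transitivity on nonzero vectors finishes it. What your approach buys is elementariness and transparency: it replaces three external citations (one of which, Curtis, requires a remark in the paper to cover small primes) with a two-page verification, and it makes visible exactly where $p \neq 2$ is used. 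What the paper's approach buys is brevity and generality — the same citation pattern handles all Chevalley groups uniformly. Both are valid; yours would make the paper more self-contained at the cost of some length.
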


Theorem \ref{theorem:splieirred} can be extracted from the literature as follows.
A theorem of Jacobson \cite[Lemma 7]{Jacobson}
says that $\SpLie_{2g}(\ClosedField_p)$ is an irreducible Lie algebra (we remark that Jacobson
showed that this fails if $p=2$).  The standard theory of algebraic groups
then says that $\SpLie_{2g}(\ClosedField_p)$ is an irreducible representation of
$\Sp_{2g}(\ClosedField_p)$ (see \cite[\S 6]{BorelChevalley} for more details
on this point).  A theorem of Curtis \cite{CurtisProjective} finally 
allows us to conclude that this representation remains irreducible upon being
restricted to $\Sp_{2g}(\Z/p) < \Sp_{2g}(\ClosedField_p)$, as desired.

\begin{remark}
Curtis's paper \cite{CurtisProjective} makes the assumption that $p > 7$, but
this assumption is not necessary; see
\cite[Corollary 7.3]{BorelChevalley} for the details of the general case.
\end{remark}

\begin{proof}[{Proof of Proposition \ref{proposition:spliehomo}}]
Consider
$$f \in \Hom_{\Sp_{2g}(\Z/{p})}(\SpLie_{2g}(\Z/p),\SpLie_{2g}(\Z/p)).$$
We get an induced $\Sp_{2g}(\Z/{p})$-module homomorphism
$\overline{f} : \SpLie_{2g}(\ClosedField_p) \rightarrow \SpLie_{2g}(\ClosedField_p)$.  
Theorem \ref{theorem:splieirred} says that $\SpLie_{2g}(\ClosedField_p)$ is an 
irreducible $\Sp_{2g}(\Z/p)$-module, so Schur's lemma implies that $\overline{f}$ is
scalar multiplication by an element of $\ClosedField_p$.  The homomorphism $f$ is then multiplication by
the same element, so we deduce that in fact $f$ is scalar multiplication by an element of $\Z/p$, and
the proposition follows.
\end{proof}

The second result we will need is as follows.
\begin{lemma}
\label{lemma:selfdual}
For $g \geq 1$ and $p$ an odd prime, the $\Sp_{2g}(\Z/p)$-modules $\SpLie_{2g}(\Z/p)$ and $(\SpLie_{2g}(\Z/p))^{\ast}$
are isomorphic.
\end{lemma}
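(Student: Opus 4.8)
The plan is to produce the self-duality by exhibiting a nondegenerate $\Sp_{2g}(\Z/p)$-invariant symmetric bilinear form on $\SpLie_{2g}(\Z/p)$, namely the trace form $\langle A, B\rangle = \Trace(AB)$. This form is symmetric because $\Trace(AB) = \Trace(BA)$, and it is invariant under the conjugation action since for $X \in \Sp_{2g}(\Z/p)$ one has $\Trace((XAX^{-1})(XBX^{-1})) = \Trace(XABX^{-1}) = \Trace(AB)$. Granting nondegeneracy, the map $A \mapsto \langle A, \cdot \rangle$ is an isomorphism $\SpLie_{2g}(\Z/p) \to (\SpLie_{2g}(\Z/p))^{\ast}$, and a one-line check using the cyclic invariance of the trace (with the convention that $X$ acts on the dual by $(X \cdot f)(B) = f(X^{-1} B X)$) shows it is $\Sp_{2g}(\Z/p)$-equivariant: both $\langle XAX^{-1}, B\rangle$ and $(X\cdot \langle A,\cdot\rangle)(B)$ equal $\Trace(XAX^{-1}B)$.

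The heart of the matter, and the step where the hypothesis that $p$ is odd enters, is the nondegeneracy of the trace form on $\SpLie_{2g}(\Z/p)$. First I would record that the trace form on the full matrix algebra $\mathfrak{gl}_{2g}(\Z/p)$ is nondegenerate: in the standard basis $\{E_{ij}\}$ one computes $\langle E_{ij}, E_{kl}\rangle = \delta_{jk}\delta_{il}$, so the Gram matrix is a permutation matrix. Next I would introduce the linear involution $\sigma$ of $\mathfrak{gl}_{2g}(\Z/p)$ defined by $\sigma(A) = \Omega_g^{-1} A^t \Omega_g$; using $\Omega_g^t = -\Omega_g$ one checks $\sigma^2 = \mathrm{id}$. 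Unwinding the definition of $\SpLie_{2g}(\Z/p)$, the condition $A^t \Omega_g + \Omega_g A = 0$ is equivalent to $\sigma(A) = -A$, so $\SpLie_{2g}(\Z/p)$ is exactly the $(-1)$-eigenspace of $\sigma$. Since $p$ is odd, $2$ is invertible and we obtain an eigenspace decomposition $\mathfrak{gl}_{2g}(\Z/p) = \SpLie_{2g}(\Z/p) \oplus \mathfrak{q}$, where $\mathfrak{q}$ denotes the $(+1)$-eigenspace of $\sigma$.

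The key computation is that this decomposition is orthogonal for the trace form. For $A$ with $\sigma(A) = -A$ and $B$ with $\sigma(B) = B$, I would evaluate $\Trace(\sigma(A)\sigma(B))$ two ways: on the one hand it equals $\Trace((-A)B) = -\Trace(AB)$, and on the other hand $\Trace(\sigma(A)\sigma(B)) = \Trace(\Omega_g^{-1} A^t B^t \Omega_g) = \Trace(A^t B^t) = \Trace((BA)^t) = \Trace(AB)$. Hence $2\Trace(AB) = 0$, and since $p$ is odd this forces $\Trace(AB) = 0$, so $\SpLie_{2g}(\Z/p) \perp \mathfrak{q}$. Because the form is nondegenerate on $\mathfrak{gl}_{2g}(\Z/p)$ and the summands are orthogonal, its restriction to each summand is nondegenerate: any vector in the radical of the restriction to $\SpLie_{2g}(\Z/p)$ is automatically orthogonal to $\mathfrak{q}$ as well, hence lies in the radical of the whole form, which is trivial. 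This yields the nondegeneracy and completes the proof. The only genuine obstacle is the orthogonality computation, which is elementary once $\sigma$ is in hand; the oddness of $p$ is used there in an essential way, consistent with the remark after Theorem \ref{theorem:adjointcoho} that self-duality fails for $L$ even.
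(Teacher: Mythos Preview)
Your proof is correct and follows the same overall strategy as the paper: exhibit the trace form $(A,B) \mapsto \Trace(AB)$ as an $\Sp_{2g}(\Z/p)$-invariant nondegenerate symmetric bilinear form. The only difference is in how nondegeneracy is established. The paper writes down an explicit basis $S$ of $\SpLie_{2g}(\Z/p)$ (the standard root-space basis $A_{i,j}$, $B_i$, $B_i'$, $C_{i,j}$, $C_{i,j}'$) and checks by hand that each basis vector pairs nontrivially with exactly one other, so the Gram matrix is a signed permutation matrix. Your argument via the involution $\sigma(A) = \Omega_g^{-1} A^t \Omega_g$ is more conceptual: you deduce nondegeneracy on $\SpLie_{2g}(\Z/p)$ from the obvious nondegeneracy on $\mathfrak{gl}_{2g}(\Z/p)$ together with the $\sigma$-eigenspace decomposition being orthogonal. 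Your route avoids any case-by-case check and makes the role of the hypothesis $p \neq 2$ transparent (it is needed both to split the eigenspaces and to cancel the $2$ in the orthogonality step), while the paper's computation has the virtue of being entirely elementary and giving the Gram matrix explicitly.
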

\begin{remark}
Lemma \ref{lemma:selfdual} is false for $p = 2$.  Indeed, if it were true, then by Lemma \ref{lemma:duality}
we would have 
$$\HH_1(\Sp_{2g}(\Z/2);\SpLie_{2g}(\Z/2)) \cong \HH^1(\Sp_{2g}(\Z/2);\SpLie_{2g}(\Z/2)).$$  
However,
while we will prove below that $\HH_1(\Sp_{2g}(\Z/2);\SpLie_{2g}(\Z/2))=0$, a computer calculation similar
to the one we will describe in Step 1 of the proof of Theorem \ref{theorem:adjointcoho} shows that
$\HH^1(\Sp_{2g}(\Z/2);\SpLie_{2g}(\Z/2)) \cong \Z/2$.
\end{remark}

If we were working in characteristic $0$, then Lemma \ref{lemma:selfdual} would follow from the nondegeneracy of
the Killing form $\Trace(\adj(x) \adj(y))$.
However, it turns out that the Killing form is degenerate if $p \mid g+1$ (on \cite[p.\ 47]{Seligman}, this
observation is attributed to Dynkin \cite{Dynkin}).  Thankfully, in our situation something
even more simple-minded works.  Namely, elements of $\SpLie_{2g}(\Z/p)$ are {\em matrices}, so
we can define
$$(x,y) = \Trace(xy).$$
This is clearly a symmetric bilinear form invariant under the action of $\Sp_{2g}(\Z/p)$, and
the following lemma says that it is nondegenerate.
\begin{lemma}
\label{lemma:traceform}
If $g \geq 1$ and $p$ is an odd prime, then $(\cdot,\cdot)$ is a nondegenerate bilinear form on $\SpLie_{2g}(\Z/p)$.
\end{lemma}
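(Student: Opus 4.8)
The plan is to show directly that the radical of the bilinear form $(x,y) = \Trace(xy)$ on $\SpLie_{2g}(\Z/p)$ is trivial when $p$ is odd. First I would recall an explicit description of the elements of $\SpLie_{2g}(\Z/p)$. Writing a $2g \times 2g$ matrix in block form $A = \MatTwoTwo{P}{Q}{R}{S}$ with $g \times g$ blocks, the condition $A^t \Omega_g + \Omega_g A = 0$ unwinds to the three requirements $S = -P^t$, $Q = Q^t$, and $R = R^t$. Thus an element of $\SpLie_{2g}(\Z/p)$ is determined by an arbitrary $g \times g$ matrix $P$ together with two symmetric $g \times g$ matrices $Q$ and $R$, and the dimension count $g^2 + 2 \cdot \binom{g+1}{2} = 2g^2 + g$ matches $\Dim \SpLie_{2g}(\Z/p)$.

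Next I would suppose that $x = \MatTwoTwo{P}{Q}{R}{S}$ lies in the radical, so that $\Trace(xy) = 0$ for every $y \in \SpLie_{2g}(\Z/p)$, and deduce that each block of $x$ vanishes by pairing $x$ against well-chosen test elements $y$. The key computation is that for $y = \MatTwoTwo{P'}{Q'}{R'}{S'}$ one has
$$\Trace(xy) = \Trace(PP') + \Trace(QR') + \Trace(RQ') + \Trace(SS').$$
Feeding in $y$ supported only on its $P'$-block (with $S' = -(P')^t$) lets me isolate a pairing of $P$ against arbitrary matrices, forcing $P = 0$; feeding in $y$ supported on its symmetric $Q'$-block isolates a pairing of $R$ against arbitrary symmetric matrices, and symmetrically the $R'$-block handles $Q$. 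The only subtlety is that the pairing $(R,Q') \mapsto \Trace(RQ')$ is a pairing between symmetric matrices, so I must check that the symmetric bilinear form $\Trace(MN)$ on the space of symmetric $g \times g$ matrices over $\Z/p$ is itself nondegenerate; this is standard since the matrix units (symmetrized) furnish a basis dual to itself up to harmless factors of $2$.

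The main obstacle, and the precise point where the hypothesis that $p$ is odd enters, is exactly this last nondegeneracy over $\Z/p$: the symmetrized elementary matrices $E_{ii}$ and $E_{ij} + E_{ji}$ pair under $\Trace(MN)$ with factors of $1$ and $2$ respectively, so the form on symmetric matrices is nondegenerate precisely because $2$ is invertible modulo an odd prime. (For $p = 2$ this breaks down, consistent with the remark that Lemma \ref{lemma:selfdual}, and hence nondegeneracy, genuinely fails in characteristic $2$.) Once the three blocks are shown to vanish I conclude $x = 0$, so the radical is trivial and $(\cdot,\cdot)$ is nondegenerate, completing the proof.
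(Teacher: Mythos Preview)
Your argument is correct and is essentially a repackaging of the paper's proof: the paper writes down an explicit basis $A_{i,j}$, $B_i$, $B_i'$, $C_{i,j}$, $C_{i,j}'$ of $\SpLie_{2g}(\Z/p)$ corresponding precisely to your block decomposition $\MatTwoTwo{P}{Q}{R}{-P^t}$ and verifies that the Gram matrix of $(\cdot,\cdot)$ in this basis is a generalized permutation matrix with nonzero entries, whereas you test the radical directly block by block---the underlying computations are identical. One small point you should make explicit: when you feed in $y$ supported on the $P'$-block (so $S' = -(P')^t$), the pairing you actually isolate is $\Trace(PP') + \Trace(SS') = 2\Trace(PP')$, so the hypothesis that $p$ is odd is needed there too, not only in the symmetric-matrix step.
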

\noindent
This lemma, which has Lemma \ref{lemma:selfdual} as an immediate corollary, 
seems to be well-known.  For instance, it is asserted without proof on \cite[p.\ 47]{Seligman}.
However, we have been unable to find a reference proving it, so we sketch a proof.
\begin{proof}[Proof of Lemma \ref{lemma:traceform}]
We will write down a basis $S$ for $\SpLie_{2g}(\Z/p)$ such that for $x \in S$, there is a unique
$x' \in S$ with $\Trace(x x') \neq 0$.  Choosing an ordering of the elements of $S$, it
will follow that the matrix $M$ representing the bilinear form $(\cdot,\cdot)$ has a unique
non-zero element in each row.  Since the form is symmetric, $M$ will also have a unique non-zero
element in each column.  We will thus thus be able to conclude that up to signs, the 
determinant of $M$ is the product of these non-zero elements, and the lemma will follow.

For $1 \leq i,j \leq 2g$, let $E_{i,j}$ denote the matrix with a $1$ at position $(i,j)$ and $0$'s elsewhere.
Define
$$S = \{\text{$A_{i,j}$ $|$ $1 \leq i,j \leq g$}\} \cup \{\text{$B_i,B_i'$ $|$ $1 \leq i \leq g$}\} \cup \{\text{$C_{i,j},C_{i,j}'$ $|$ $1 \leq i < j \leq g$}\} \subset \SpLie_{2g}(\Z/p),$$
where
\begin{align*}
A_{i,j} = E_{i,j} - E_{g+i,g+j}, &\quad C_{i,j} = E_{g+i,j} + E_{g+j,i}, \quad C_{i,j}' = E_{i,g+j} + E_{j,g+i},\\
&B_{i} = E_{g+i,i}, \quad B_{i}' = E_{i,g+i}.
\end{align*}
It is clear that $S$ is a basis for $\SpLie_{2g}(\Z/p)$.  The lemma is now a simple case by case check.
For instance, the only $x \in S$ with $\Trace(A_{i,j} x) \neq 0$ is
$A_{j,i}$, and
$$\Trace(A_{i,j} A_{j,i}) = \Trace((E_{i,j} - E_{g+i,g+j})(E_{j,i} - E_{g+j,g+i})) = \Trace(E_{i,i} + E_{g+j,g+j}) = 2.$$
The other cases are similar.
\end{proof}

\subsubsection{Homological stability}
Fix a ring $R$.  Define a {\em coefficient system} for
$\Sp_{2g}(R)$ to be a sequence $\{M_g,i_g\}_{g \geq 1}$, where $M_g$ is an $\Sp_{2g}(R)$-module
and $i_g : M_g \rightarrow M_{g+1}$ is an map that is equivariant with respect to
the natural inclusion $\Sp_{2g}(R) \hookrightarrow \Sp_{2(g+1)}(R)$.  One example of such a coefficient
system is $M_g = \SpLie_{2g}(R)$, where the maps $i_g : M_g \rightarrow M_{g+1}$ are the obvious
inclusions.

For $R$ satisfying a certain connectivity condition (see below), 
Charney \cite{CharneyVogtmann} proved a homological stability result for
$\Sp_{2g}(R)$ with respect to coefficient systems that are {\em central of degree $k$}
for some $k \geq 0$.  We will not reproduce the definition
of this here, but merely record the following fact, whose proof is similar to that of Example
3 on \cite[p.\ 123]{CharneyVogtmann}.

\begin{lemma}
\label{lemma:liecentral}
The coefficient system $\SpLie_{2g}(R)$ is central of degree $2$.
\end{lemma}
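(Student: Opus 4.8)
The plan is to recognize the coefficient system $\{\SpLie_{2g}(R)\}$ as a subquotient of the tensor square of the standard symplectic coefficient system $\{R^{2g}\}$, and then to invoke the degree bookkeeping carried out in Example~3 of \cite{CharneyVogtmann}. Write $H_g = R^{2g}$ for the standard $\Sp_{2g}(R)$-module. The first step is to produce an $\Sp_{2g}(R)$-equivariant isomorphism between $\SpLie_{2g}(R)$ and the $R$-module of symmetric $2g \times 2g$ matrices. Indeed, the map $A \mapsto \Omega_g A$ does the job: since $\Omega_g^t = -\Omega_g$, one checks that $(\Omega_g A)^t = -A^t \Omega_g$, so the defining relation $A^t \Omega_g + \Omega_g A = 0$ holds if and only if $\Omega_g A$ is symmetric, and $\Omega_g$ is invertible over $R$, so the map is a bijection.

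Next I would check the two compatibilities that promote this into an isomorphism of coefficient systems. For equivariance, recall that $\Sp_{2g}(R)$ acts on $\SpLie_{2g}(R)$ by conjugation; using $\Omega_g X = (X^{-1})^t \Omega_g$ for $X \in \Sp_{2g}(R)$, one computes $\Omega_g(X A X^{-1}) = (X^{-1})^t (\Omega_g A) X^{-1}$, so our map intertwines conjugation with the natural action $S \mapsto (X^{-1})^t S X^{-1}$ on symmetric matrices, i.e.\ the action of $\Sp_{2g}(R)$ on symmetric bilinear forms on $H_g$. For compatibility with stabilization, one verifies that, after accounting for the ordering convention on the symplectic basis built into $\Omega_g$, the block inclusion $\SpLie_{2g}(R) \hookrightarrow \SpLie_{2(g+1)}(R)$ corresponds to the inclusion of symmetric matrices induced by $H_g \hookrightarrow H_{g+1}$. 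Together these give an isomorphism of coefficient systems between $\{\SpLie_{2g}(R)\}$ and the coefficient system of symmetric matrices.

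Finally I would identify the target with a subquotient of a tensor square and count degrees. The module of symmetric matrices is the submodule of $H_g^{\ast} \otimes H_g^{\ast}$ fixed by the transposition of the two tensor factors, and the symplectic form furnishes an $\Sp_{2g}(R)$-isomorphism $H_g^{\ast} \cong H_g$; hence the coefficient system of symmetric matrices is a subquotient of $\{H_g \otimes H_g\}$. The standard system $\{H_g\}$ is central of degree $1$, so by the additivity of degree under tensor products established in Example~3 of \cite{CharneyVogtmann}, the system $\{H_g \otimes H_g\}$ is central of degree $2$; since central coefficient systems of degree $\leq k$ are closed under passing to subquotients, so is the coefficient system of symmetric matrices, and therefore so is $\{\SpLie_{2g}(R)\}$. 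The genuinely substantive step, and the main place where care is required, is the second one: matching the stabilization maps on the nose (keeping track of the basis ordering implicit in $\Omega_g$) and confirming that the pointwise isomorphism is natural in $g$. The only other subtlety is that the argument must run over an arbitrary commutative ring $R$ — in particular in characteristic $2$, where one should work with the fixed submodule of $H_g^{\ast} \otimes H_g^{\ast}$ as above rather than with a symmetric power — but this does not affect the degree count.
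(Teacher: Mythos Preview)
Your proposal is correct and is precisely the argument the paper gestures at: the paper gives no proof beyond the remark that it is ``similar to that of Example~3 on \cite[p.~123]{CharneyVogtmann},'' and your write-up supplies exactly those details. The identification $A \mapsto \Omega_g A$ of $\SpLie_{2g}(R)$ with symmetric matrices, the realization of the latter as a sub-coefficient-system of $H_g^{\ast} \otimes H_g^{\ast} \cong H_g \otimes H_g$, and the appeal to the degree additivity and subquotient-closure properties from Charney's Example~3 together constitute the intended proof.
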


\noindent
The statement of Charney's theorem depends on the connectivity of a certain space $HU(R^{2g})$, the
{\em poset of hyperbolic unimodular sequences} in $R^{2g}$ (equipped with the standard symplectic form).  The
key property of $HU(R^{2g})$ for us is the following lemma.

\begin{lemma}
\label{lemma:huconnected}
For $g \geq 1$ and $L \geq 2$, the complex $HU((\Z/L)^{2g})$ is $\frac{g-4}{2}$-connected.
\end{lemma}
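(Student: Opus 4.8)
The plan is to recognize $HU((\Z/L)^{2g})$ as an instance of the posets of hyperbolic unimodular sequences whose high connectivity is governed by the stable rank of the underlying ring. The standard template for such results—going back to van der Kallen's analysis of the poset of unimodular sequences in the linear setting, and carried over to the symplectic/unitary setting by Charney and by Mirzaii--van der Kallen—asserts that if $R$ is a ring whose relevant (unitary/absolute) stable-rank invariant is bounded by a small constant $d$, then the poset of hyperbolic unimodular sequences in $R^{2g}$ is roughly $\frac{g-d-3}{2}$-connected. Thus the real content of the lemma is to identify the appropriate stable-rank invariant of $R = \Z/L$, and then to feed that value into the general connectivity theorem and confirm that the bookkeeping produces exactly the bound $\frac{g-4}{2}$.

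For the stable-rank computation I would use that $\Z/L$ is a finite commutative ring. By the Chinese Remainder Theorem $\Z/L \cong \prod_i \Z/p_i^{k_i}$, so $\Z/L$ is a finite product of local rings; in particular it is Artinian, hence semilocal. By Bass's theorem the Bass stable rank of a semilocal ring equals $1$, and the corresponding estimate holds for the unitary and absolute stable-rank variants that appear in the symplectic theory, all of which are minimal for semilocal rings. The point worth emphasizing is that $\Z/L$ is genuinely semilocal rather than local or a field when $L$ is divisible by more than one prime, so one must invoke the semilocal version of these stable-rank bounds rather than the easier local case.

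With the stable rank pinned at its minimal value $d=1$, I would then invoke the general high-connectivity theorem for the poset of hyperbolic unimodular sequences, substitute $d=1$, and verify that the resulting bound simplifies to $\frac{g-4}{2}$ for all $g \geq 1$. The hypotheses of that theorem are all satisfied here: the relevant stable rank is finite, and the symplectic form on $(\Z/L)^{2g}$ is nondegenerate since $\Omega_g$ has determinant $1$ and is therefore invertible over every $\Z/L$. This reduces the lemma to an application of known machinery once the ring-theoretic input is in place.

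The main obstacle I anticipate is not conceptual but a matter of careful bookkeeping. The various sources state their connectivity results for slightly different objects (ordered versus unordered sequences, isotropic versus fully hyperbolic vectors, a simplicial complex versus a poset of partial symplectic bases) and with differing indexing conventions, so the genuine work is to confirm that the complex $HU((\Z/L)^{2g})$ as used in Charney's stability theorem is literally the poset to which the cited connectivity theorem applies, and that the constants align to give precisely $\frac{g-4}{2}$ rather than an off-by-one variant. A secondary subtlety is ensuring the stable-rank machinery is applied in the correct symplectic flavor, since the ordinary Bass stable rank and the unitary stable rank can diverge in general; for semilocal rings, however, both attain their minimal value, which is exactly what makes the argument go through uniformly in $L$.
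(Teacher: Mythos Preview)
Your proposal is correct and follows essentially the same route as the paper: invoke the Mirzaii--van der Kallen connectivity theorem that $HU(R^{2g})$ is $\frac{g-\StableRank(R)-3}{2}$-connected, observe that $\Z/L$ has stable rank $1$ (the paper simply cites Bass for semilocal rings, while you spell out the semilocal argument via the Chinese Remainder Theorem), and substitute to obtain $\frac{g-4}{2}$. One minor point: the stable rank appearing in Mirzaii--van der Kallen is the ordinary Bass stable rank (their $\StableRank(R)=n$ corresponds to Bass's condition $SR_{n+1}$), so your concern about a distinct unitary stable-rank invariant is unnecessary here.
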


Lemma \ref{lemma:huconnected} can be extracted from the literature in the following way.  In
\cite[Theorem 5.8]{MirzaiiVanDerKallen}, Mirzaii and van der Kallen 
proved a much more general result, namely, that if $R$ has {\em stable rank}
$\StableRank(R)$, then $HU(R^{2g})$ is $\frac{g-\StableRank(R)-3}{2}$-connected.  The lemma
then follows from the fact that $\Z/L$ has stable rank $1$ (see \cite[Proposition V.3.4.a]{BassKTheory}; a
ring $R$ has stable rank $n$ in the sense of \cite{MirzaiiVanDerKallen} if it satisfies
Bass's condition $SR_{n+1}(R)$).  

We can now state Charney's theorem.

\begin{theorem}[{Charney, \cite[Theorem 4.3]{CharneyVogtmann}}]
\label{theorem:charneystability}
Let $R$ be a finitely generated ring and let $a \geq 3$ be such that $HU(R^{2g})$ is $\frac{g-a-1}{2}$-connected for
all $g \geq 1$.  Let $\{M_g,i_g\}_{g \geq 1}$ be a coefficient system for $\Sp_{2g}(R)$ that 
is central of degree $k$.  Then the natural map
$$\HH_i(\Sp_{2g}(R);M_g) \rightarrow \HH_{i}(\Sp_{2(g+1)}(R);M_{g+1})$$
is surjective for $g \geq 2i+k+a$ and bijective for $g \geq 2i+k+a+1$.
\end{theorem}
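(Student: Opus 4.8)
The plan is to follow the Quillen--van der Kallen--Charney machinery for homological stability, feeding in the connectivity of $HU(R^{2g})$ as the sole geometric input. First I would regard $HU(R^{2g})$ as the realization of the semisimplicial set whose $p$-simplices are the hyperbolic unimodular sequences of length $p+1$ in $R^{2g}$, so that the hypothesis that $HU(R^{2g})$ is $\frac{g-a-1}{2}$-connected feeds directly into the range estimates. The key structural facts I would establish are that $\Sp_{2g}(R)$ acts transitively on the set of $p$-simplices (using that all hyperbolic unimodular sequences of a given length are equivalent under the symplectic group) and that the stabilizer of a standard $p$-simplex is, up to conjugacy, a copy of the smaller group $\Sp_{2(g-p-1)}(R)$, possibly extended by a unipotent factor coming from the choices of complementary vectors.

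Next I would set up the equivariant-homology spectral sequence associated to the skeletal filtration of $\lvert HU(R^{2g})\rvert \times_{\Sp_{2g}(R)} E\Sp_{2g}(R)$, with coefficients in $M_g$. Because $\lvert HU(R^{2g})\rvert$ is $\frac{g-a-1}{2}$-connected, the natural map from $\HH_i(\Sp_{2g}(R);M_g)$ to the equivariant homology $\HH_i^{\Sp_{2g}(R)}(\lvert HU(R^{2g})\rvert;M_g)$ is an isomorphism in a range that grows linearly in $g$ (this is the same phenomenon recorded in the first conclusion of Lemma \ref{lemma:omnibuseqco}). By transitivity and Shapiro's lemma, the $E^1$-page in total degree $q$ and filtration degree $p$ is identified with $\HH_q(\Sp_{2(g-p-1)}(R); M_g)$, where $M_g$ is restricted to the stabilizer. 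The role of the central-of-degree-$k$ hypothesis is precisely to guarantee that this restricted coefficient module is, up to a filtration whose length is controlled by $k$, identified with the coefficient module $M_{g-p-1}$ of the smaller symplectic group; this is the input that makes the $d^1$-differentials agree, up to signs, with alternating sums of the stabilization maps $\HH_q(\Sp_{2(g-p-1)}(R);M_{g-p-1}) \to \HH_q(\Sp_{2(g-p)}(R);M_{g-p})$.

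With this identification in hand, I would run a double induction on the homological degree $i$ and on the genus $g$: assuming that the stabilization maps are surjective for $g \geq 2i+k+a$ and bijective for $g \geq 2i+k+a+1$ for all smaller values of the pair $(i,g)$, the vanishing of the spectral sequence in the relevant range forces the augmented $E^1$-complex of stabilization maps to be acyclic just far enough to propagate surjectivity and then bijectivity to the current $(i,g)$. I expect the main obstacle to be the bookkeeping that converts the connectivity bound $\frac{g-a-1}{2}$ and the degree $k$ of the coefficient system into the exact numerical range $g \geq 2i+k+a$, together with the verification that the central-of-degree-$k$ condition genuinely trivializes $M_g$ on each stabilizer up to the claimed filtration. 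This is the step where the finite generation of $R$ and the structure of the coefficient system are really used, and it is also where sign and reindexing errors in the $d^1$-differential are easiest to commit.
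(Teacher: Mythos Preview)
The paper does not prove this theorem; it is quoted verbatim from Charney's paper \cite[Theorem 4.3]{CharneyVogtmann} and used as a black box. There is therefore no proof in the present paper to compare your proposal against.

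That said, your outline is a faithful sketch of the Quillen--van der Kallen--Charney method that Charney actually employs: one filters by the skeleta of the semisimplicial set of hyperbolic unimodular sequences, uses transitivity of the $\Sp_{2g}(R)$-action together with Shapiro's lemma to identify the $E^1$-page with homology of smaller symplectic groups, invokes the ``central of degree $k$'' hypothesis to control the restricted coefficient module, and closes with a double induction on $(i,g)$. Two small points of caution: first, the stabilizer of a $p$-simplex is in general a semidirect product of $\Sp_{2(g-p-1)}(R)$ with a unipotent group rather than the symplectic group itself, and part of Charney's argument is checking that this unipotent factor does not disturb the coefficient identification; second, the finite generation hypothesis on $R$ is not used in the spectral sequence argument per se but rather enters upstream in the connectivity and transitivity statements. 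If you intend to reproduce the proof in full you should consult \cite{CharneyVogtmann} directly, since the precise numerics in the range $g \geq 2i+k+a$ require some care and your sketch does not yet pin them down.
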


\begin{remark}
Charney proved a result similar to Lemma \ref{lemma:huconnected} for $R$ a PID.  This is enough to cover
most of our uses of Theorem \ref{theorem:charneystability}; however, in Step 2 of the proof
of Theorem \ref{theorem:adjointcoho} below, we will need to apply Theorem \ref{theorem:charneystability}
with $R = \Z/9$, which is not a domain.
\end{remark}

\subsection{The proof of Theorem \ref{theorem:adjointcoho}}
\label{section:adjointcohoproof}

Recall that we wish to prove that
$$\HH_1(\Sp_{2g}(\Z/L);\SpLie_{2g}(\Z/L)) = 0$$
for $g \geq 3$ and $L \geq 2$ such that $4 \nmid L$.  The proof will have four steps.  In all the steps, we fix $g \geq 3$.

\BeginSteps
\begin{step}
Let $p$ be prime.  Then $\HH_1(\Sp_{2g}(\Z/{p});\SpLie_{2g}(\Z/p))=0$.  
\end{step}

Lemma \ref{lemma:duality} says that
\begin{equation}
\label{eqn:duality}
(\HH_1(\Sp_{2g}(\Z/p);\SpLie_{2g}(\Z/p)))^{\ast} \cong \HH^1(\Sp_{2g}(\Z/p);(\SpLie_{2g}(\Z/p))^{\ast}).
\end{equation}
It is enough to prove that either side of \eqref{eqn:duality} vanishes.  
For $p$ odd, Lemma \ref{lemma:selfdual} says that $(\SpLie_{2g}(\Z/p))^{\ast} \cong \SpLie_{2g}(\Z/p)$.
A theorem of Volklein \cite{Volklein} says that $\HH^1(\Sp_{2g}(\Z/p);\SpLie_{2g}(\Z/p)) = 0$ for $g \geq 3$ and primes $p > 5$.
We thus only need to deal with $p \in \{2,3,5\}$.  By the homological stability theorem for the
symplectic group (Theorem \ref{theorem:charneystability}) together with 
Lemmas \ref{lemma:liecentral} and \ref{lemma:huconnected}, 
for a fixed $p$ the desired result will hold for all $g$ if it holds for 
$g$ such that $3 \leq g \leq 7$.  This reduces the result to checking a finite number of cases.
These cases may be easily checked on a computer using the algorithm for calculating twisted first cohomology groups given
in \cite[\S 7.6.1]{Holt}.  Computer code for performing these calculations
(written in {\tt C++}) can be found on the author's webpage.

\begin{step}
Let $p$ be an odd prime and let $k \geq 1$.  Then
$\HH_1(\Sp_{2g}(\Z/{p^k});\SpLie_{2g}(\Z/p)) = 0$.
\end{step}

Since $p$ is odd, Lemma \ref{lemma:duality} together with Lemma \ref{lemma:selfdual} says that the desired
claim is equivalent to proving that $\HH^1(\Sp_{2g}(\Z/{p^k});\SpLie_{2g}(\Z/p)) = 0$.
The proof of this will be by induction on $k$.  The case $k=1$ follows from Step 1.  Our inductive argument also
does not cover the case $(p,k)=(3,2)$, but this case can be handled by a computer calculation just
as in Step 1.

Assume now that $k \geq 1$ and that $\HH^1(\Sp_{2g}(\Z/{p^k});\SpLie_{2g}(\Z/p)) = 0$.  If $p = 3$, then
we will also assume that $k \geq 2$.  We must prove that $\HH^1(\Sp_{2g}(\Z/{p^{k+1}});\SpLie_{2g}(\Z/p)) = 0$.
By Lemma \ref{lemma:sppdecomp} (which requires $p^k \geq 4$), we have a nonsplit short exact sequence 
\begin{equation}
\label{eqn:induction1}
1 \longrightarrow \SpLie_{2g}(\Z/{p}) \longrightarrow \Sp_{2g}(\Z/{p^{k+1}}) \longrightarrow \Sp_{2g}(\Z/{p^{k}}) \longrightarrow 1.
\end{equation}
The inclusion $\SpLie_{2g}(\Z/{p}) \hookrightarrow \Sp_{2g}(\Z/p^{k+1})$ in this exact sequence
takes a matrix $A \in \SpLie_{2g}(\Z/{p})$ to $1+p^k A \in \Sp_{2g}(\Z/p^{k+1})$.
Observe that the restriction of the action of $\Sp_{2g}(\Z/{p^{k+1}})$ on $\SpLie_{2g}(\Z/{p})$ to the subgroup
$\SpLie_{2g}(\Z/p) < \Sp_{2g}(\Z/p^{k+1})$ indicated in \eqref{eqn:induction1} is trivial.
The $E_2$ page of the associated Hochschild-Serre spectral sequence for cohomology with coefficients
in $\SpLie_{2g}(\Z/p)$ is of the form
\begin{center}
\begin{tabular}{|c@{\hspace{0.2 in}}c@{\hspace{0.2 in}}c}
\footnotesize{$\HH^0(\Sp_{2g}(\Z/{p^{k}});\HH^1(\SpLie_{2g}(\Z/p);\SpLie_{2g}(\Z/p)))$} & & \\
\footnotesize{$\ast$} & \footnotesize{$\HH^1(\Sp_{2g}(\Z/{p});\SpLie_{2g}(\Z/p))$} & \footnotesize{$\HH^2(\Sp_{2g}(\Z/{p^k});\SpLie_{2g}(\Z/p))$} \\
\cline{1-3}
\end{tabular}
\end{center}
By assumption, we have have $\HH^1(\Sp_{2g}(\Z/{p^k});\SpLie_{2g}(\Z/p))=0$.  Let
$$d : \HH^0(\Sp_{2g}(\Z/{p^{k}});\HH^1(\SpLie_{2g}(\Z/p);\SpLie_{2g}(\Z/p))) \longrightarrow \HH^2(\Sp_{2g}(\Z/{p^k});\SpLie_{2g}(\Z/p))$$
be the differential.  Since the action of
$\Sp_{2g}(\Z/{p^k})$ on $\SpLie_{2g}(\Z/p)$ factors through $\Sp_{2g}(\Z/p)$, we can use
Proposition \ref{proposition:spliehomo} to deduce that
$$\HH^0(\Sp_{2g}(\Z/{p^{k}});\HH^1(\SpLie_{2g}(\Z/p);\SpLie_{2g}(\Z/p))) \cong \Hom_{\Sp_{2g}(\Z/{p^k})}(\SpLie_{2g}(\Z/p),\SpLie_{2g}(\Z/p)).$$
is an order $p$ cyclic group generated by the identity homomorphism $i : \SpLie_{2g}(\Z/p) \rightarrow \SpLie_{2g}(\Z/p)$.
Since \eqref{eqn:induction1} does not split, Theorem \ref{theorem:transgression} implies that
$d(i) \neq 0$.  We conclude that the $E_3$ page of the spectral sequence is of the form
\begin{center}
\begin{tabular}{|c@{\hspace{0.2 in}}c@{\hspace{0.2 in}}c}
$0$ & & \\
$\ast$ & $0$ & $\ast$ \\
\cline{1-3}
\end{tabular}
\end{center}
and the desired result follows.

\begin{step}
Let $L \geq 2$ satisfy $4 \nmid L$ and let $p$ be a prime dividing $L$.  Then $\HH_1(\Sp_{2g}(\Z/{L});\SpLie_{2g}(\Z/p)) = 0$.
\end{step}

Write $L = p_1^{k_1} \cdots p_n^{k_n}$, where the $p_i$ are distinct primes and $p_1 = p$.  By
Lemma \ref{lemma:spldecomp}, we have 
$$\Sp_{2g}(\Z/L) \cong \bigoplus_{i=1}^n \Sp_{2g}(\Z/{p_i^{k_i}}).$$
For $i > 1$, the action of $\Sp_{2g}(\Z/{p_i^{k_i}})$ on $\SpLie_{2g}(\Z/p)$ is trivial.  Also, 
Lemma \ref{lemma:splperfect}
says that $\HH_1(\Sp_{2g}(\Z/{p_i^{k_i}});\Z)=0$ for all $i$.  Lemma \ref{lemma:killchunk} thus implies
that
$$\HH_1(\Sp_{2g}(\Z/L);\SpLie_{2g}(\Z/p)) \cong \HH_1(\Sp_{2g}(\Z/{{p_1}^{k_1}});\SpLie_{2g}(\Z/p)).$$
If $p_1$ is odd, then Step 2 says that $\HH_1(\Sp_{2g}(\Z/{p_1^{k_1}});\SpLie_{2g}(\Z/p)) = 0$.  If $p_1 = 2$, then
the fact that $4 \nmid L$ implies that $k_1 = 1$, so Step 1 says that $\HH_1(\Sp_{2g}(\Z/{p_1^{k_1}});\SpLie_{2g}(\Z/p)) = 0$.
The result follows.

\begin{step}
Let $L \geq 2$ satisfy $4 \nmid L$ and let 
$L' > 1$ satisfy $L' \mid L$.  Then $\HH_1(\Sp_{2g}(\Z/{L});\SpLie_{2g}(\Z/{L'})) = 0$.  In
particular, $\HH_1(\Sp_{2g}(\Z/{L});\SpLie_{2g}(\Z/{L})) = 0$.
\end{step}

Write $L = p_1^{k_1} \cdots p_n^{k_n}$ and $L' = p_1^{k_1'} \cdots p_n^{k_n'}$, where the $p_i$ are 
distinct odd primes and $k_i \geq 1$ for $1 \leq i \leq n$.  Observe that $0 \leq k_i' \leq k_i$ for $1 \leq i \leq n$.  
The proof will be by induction on $\sum |k_i'|$.

The base case is $\sum |k_i'| = 1$, in which case $L' = p_j$ for some $1 \leq j \leq n$ and the result
follows from Step 3. Assume now that $\sum |k_i'| > 1$ and that
the desired result is true for all smaller sums.  Pick $1 \leq m \leq n$ such that $k_m' \geq 1$.  Set
$$k_i'' = \begin{cases}
k_i' & \text{if $i \neq m$} \\
k_i'-1 & \text{if $i = m$}
\end{cases} \quad \quad (1 \leq i \leq n)$$
and $L'' = p_1^{k_1''} \cdots p_n^{k_n''}$.  There is a short exact sequence
$$0 \longrightarrow \Z/{L''} \longrightarrow \Z/{L'} \longrightarrow \Z/{p_m} \longrightarrow 0$$
of rings and a corresponding short exact sequence
$$0 \longrightarrow \SpLie_{2g}(\Z/{L''}) \longrightarrow \SpLie_{2g}(\Z/{L'}) \longrightarrow \SpLie_{2g}(\Z/{p_m}) \longrightarrow 0$$
of $\Sp_{2g}(\Z/L)$-modules.  This induces a long exact sequence in $\Sp_{2g}(\Z/L)$-homology which contains the
segment
$$\HH_1(\Sp_{2g}(\Z/{L});\SpLie_{2g}(\Z/{L''})) \rightarrow \HH_1(\Sp_{2g}(\Z/{L});\SpLie_{2g}(\Z/{L'})) \rightarrow \HH_1(\Sp_{2g}(\Z/{L});\SpLie_{2g}(\Z/{p_m})).$$
The inductive hypothesis implies that 
$$\HH_1(\Sp_{2g}(\Z/{L});\SpLie_{2g}(\Z/{L''}))=0 \quad \text{and} \quad \HH_1(\Sp_{2g}(\Z/{L});\SpLie_{2g}(\Z/{p_m}))=0,$$
so $\HH_1(\Sp_{2g}(\Z/{L});\SpLie_{2g}(\Z/{L'}))=0$, as desired.

\section{The moduli space of principally polarized abelian varieties}
\label{section:ppav}

\subsection{Definitions}
\label{section:ppavdef}

We now introduce the moduli space of principally polarized abelian varieties.  For more information,
see \cite{FreitagSingular, IgusaTheta, MumfordTata1}.  In this paper, all abelian varieties are over $\C$.

The {\em Siegel upper half plane} $\Siegel_g$ is the space of $g \times g$ symmetric complex
matrices with positive definite imaginary parts.  A principal polarization of an abelian variety $A$ is
an extra bit of data whose precise nature does not concern us here.  All we will need to know is
that it provides a symplectic form on $\HH_1(A;\Z)$.  The space $\Siegel_g$ parametrizes pairs $(A,B)$, where
$A$ is a $g$-dimensional principally polarized abelian variety and $B$ is a symplectic basis for $\HH_1(A;\Z)$.  The
abelian variety associated to $\Omega \in \Siegel_g$ is $\C^g / (\Z^g + \Omega \Z^g)$.  The space $\Siegel_g$ is
a contractible complex manifold and the group $\Sp_{2g}(\Z)$ acts on it holomorphically and properly discontinuously.  
If $M \in \Sp_{2g}(\Z)$ and $\Omega \in \Siegel_g$, then
$$M(\Omega) = \frac{A \Omega + B}{C \Omega + D},$$
where $M = \MatTwoTwo{A}{B}{C}{D}$ is the decomposition of $M$ into $g \times g$ blocks.  One can view
this action as changing the symplectic basis associated to an element of $\Siegel_g$. 

As a complex analytic space, $\PPAV_g = \Siegel_g / \Sp_{2g}(\Z)$; however, for us it is important
to take into account its structure as an orbifold.
To simplify our statements, we will denote $\Sp_{2g}(\Z)$ and $\PPAV_g$ by $\Sp_{2g}(\Z,1)$ and $\PPAV_g(1)$, respectively.  
For $L \geq 1$, the space $\PPAV_g(L)$ is the orbifold
$(\Siegel_g,\Sp_{2g}(\Z,L))$.  Thus $\PPAV_g(L')$ is a covering orbifold for $\PPAV_g(L)$ whenever $L|L'$.  For
$L \geq 3$, the group $\Sp_{2g}(\Z,L)$ acts freely on $\Siegel_g$, so in these cases $\PPAV_g(L)$ is a
trivial orbifold.  Ignoring the orbifold structure for a moment, it is known that the complex analytic space
$\Siegel_g / \Sp_{2g}(\Z,L)$ is a quasiprojective variety for all $L$ (see \cite{IgusaTheta}
for a proof and history of this result).  We conclude that $\PPAV_g(L)$ is a quasiprojective orbifold
with quasiprojective finite cover $\PPAV_g(L')$ for any $L' \geq 3$ such that $L|L'$.

\subsection{Some classical results}
\label{section:ppavclassical}

To investigate $\Pic(\PPAV_g(L))$, our main tool will be Lemma \ref{lemma:maintool}.
To verify the conditions of that lemma, we will need a number of classical results, the first of which
are the following group-cohomology calculations. 

\begin{theorem}
\label{theorem:hsp}
For $g \geq 3$, we have $\HH_1(\Sp_{2g}(\Z);\Z) = 0$.  For $g \geq 4$, we have $\HH_2(\Sp_{2g}(\Z);\Z) \cong \Z$.
\end{theorem}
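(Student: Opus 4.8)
The plan is to treat the two assertions separately: the vanishing of $\HH_1$ is a perfectness statement, whereas $\HH_2(\Sp_{2g}(\Z);\Z) \cong \Z$ I would obtain by splitting off a rational rank computation from an integral, torsion-free stable computation and then transporting the latter down to the range $g \geq 4$ by homological stability.

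For the first assertion I would show that $\Sp_{2g}(\Z)$ is perfect whenever $g \geq 2$, which gives $\HH_1(\Sp_{2g}(\Z);\Z) = 0$ for $g \geq 3$. Since $\Z$ is Euclidean, $\Sp_{2g}(\Z)$ is generated by its elementary symplectic transvections, that is, by the root subgroups of type $C_g$, so it suffices to exhibit each such generator as a product of commutators. Once the rank $g$ is at least $2$, this follows from the Chevalley commutator relations: each elementary root element is realized as a commutator of two root elements supported on disjoint index sets, forcing the entire group into its own commutator subgroup. The hypothesis $g \geq 2$ is essential here, since $\Sp_2(\Z) = \SL_2(\Z)$ has abelianization $\Z/12$; this is precisely why the theorem is stated only for $g \geq 3$.

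For $\HH_2(\Sp_{2g}(\Z);\Z) \cong \Z$ I would separate the rank from the torsion. The rank is $1$ by Borel's computation of the stable rational cohomology of $\Sp_{2g}(\Z)$ \cite{BorelStability1, BorelStability2}: in the stable range the rational cohomology is a polynomial algebra whose generator of smallest degree sits in degree $2$, so $\HH^2(\Sp_{2g}(\Z);\Q) \cong \Q$ and $\HH_2(\Sp_{2g}(\Z);\Z)$ has rank $1$. Combined with the first assertion, the universal coefficients theorem shows that $\HH^2(\Sp_{2g}(\Z);\Z) \cong \Hom(\HH_2(\Sp_{2g}(\Z);\Z),\Z)$ is free of rank $1$; what this argument does \emph{not} yet rule out is torsion sitting inside $\HH_2$ itself, which would instead surface in $\HH^3$.

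The hard part will be eliminating that torsion and reaching the sharp threshold $g \geq 4$. My plan is to pass to the infinite symplectic group, where a classical computation shows that the stable homology group $\HH_2(\Sp(\Z);\Z)$ is infinite cyclic, and then to transport this value down to finite genus via homological stability. Charney's theorem (Theorem \ref{theorem:charneystability}), applied to the constant coefficient system with the connectivity of $HU(\Z^{2g})$ supplied by Charney's result for PIDs (cf.\ the remark following Theorem \ref{theorem:charneystability}), already yields that $\HH_2(\Sp_{2g}(\Z);\Z)$ stabilizes, but only in a range weaker than $g \geq 4$; descending all the way to $g \geq 4$ requires the sharper symplectic stability ranges available in the literature. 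The genuine content is therefore the torsion-free integral stable computation together with a stability theorem of optimal range, the rank-one statement being comparatively soft once Borel's theorem is granted. As this section assembles classical inputs feeding into Lemma \ref{lemma:maintool}, I would in the end cite these two facts rather than reprove them.
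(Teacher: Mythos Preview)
Your proposal is correct but takes a genuinely different route from the paper. The paper extracts Theorem \ref{theorem:hsp} from facts about the mapping class group (see the remark after Lemma \ref{lemma:modsp}): since $\Mod_g \twoheadrightarrow \Sp_{2g}(\Z)$ and Powell's theorem gives $\HH_1(\Mod_g;\Z)=0$ for $g \geq 3$, the $\HH_1$ statement follows immediately; for $\HH_2$, the paper shows in Lemma \ref{lemma:modsp} that $\HH_2(\Mod_g;\Z) \to \HH_2(\Sp_{2g}(\Z);\Z)$ is surjective, by applying the $5$-term exact sequence of $1 \to \Torelli_g \to \Mod_g \to \Sp_{2g}(\Z) \to 1$ together with the vanishing of $(\HH_1(\Torelli_g;\Z))_{\Sp_{2g}(\Z)}$ from Lemma \ref{lemma:kill2torsion}. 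Since the source is $\Z$ by Harer's theorem and the target has rank $1$ by Borel, the surjection forces $\HH_2(\Sp_{2g}(\Z);\Z) \cong \Z$. Your approach instead stays entirely inside the symplectic group: perfectness via Chevalley commutator relations among root subgroups, and $\HH_2$ via the stable value $\HH_2(\Sp(\Z);\Z) \cong \Z$ transported down by homological stability. The advantage of the paper's route is that the sharp threshold $g \geq 4$ comes for free from Harer's range, whereas in your approach locating a stability theorem that reaches exactly $g \geq 4$ is the point you correctly flag as requiring an external citation beyond Theorem \ref{theorem:charneystability}; the advantage of yours is that it avoids importing the Torelli group and Harer's theorem for a statement that is, after all, purely about $\Sp_{2g}(\Z)$.
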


\begin{remark}
Theorem \ref{theorem:hsp} is classical, but I do not know a good reference for it.  See the remark
after the proof of Lemma \ref{lemma:modsp} below for one way to extract it from the literature.
\end{remark}

\begin{theorem}[{Kazhdan, \cite{KazhdanT}}]
\label{theorem:kazhdan}
Fix $g \geq 2$ and $L \geq 1$.  We then have $\HH^1(\Sp_{2g}(\Z,L);\Z) = 0$.
\end{theorem}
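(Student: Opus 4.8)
The statement asserts that $\HH^1(\Sp_{2g}(\Z,L);\Z) = 0$ for $g \geq 2$ and $L \geq 1$. Since the coefficient module $\Z$ is trivial, we have a natural identification $\HH^1(\Sp_{2g}(\Z,L);\Z) \cong \Hom(\Sp_{2g}(\Z,L),\Z)$. Thus the plan is to show that $\Sp_{2g}(\Z,L)$ admits no nontrivial homomorphism to $\Z$, or equivalently that its abelianization is a torsion group. Following Kazhdan, I would deduce this from Kazhdan's \emph{property (T)}.

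First I would recall that for $g \geq 2$ the real Lie group $\Sp_{2g}(\R)$ is a connected simple Lie group of real rank $g \geq 2$, and hence by Kazhdan's theorem it has property (T). (Note that this is exactly why the hypothesis $g \geq 2$ appears: the rank-one group $\Sp_2(\R) = \SL_2(\R)$ does not have property (T).) Second, $\Sp_{2g}(\Z)$ is a lattice in $\Sp_{2g}(\R)$, and property (T) passes from a locally compact group to any of its lattices; therefore $\Sp_{2g}(\Z)$ has property (T). Third, $\Sp_{2g}(\Z,L)$ is a finite-index subgroup of $\Sp_{2g}(\Z)$, and property (T) is inherited by finite-index subgroups, so $\Sp_{2g}(\Z,L)$ has property (T) as well.

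Finally I would invoke the standard structural consequence that a discrete group with property (T) is finitely generated and has finite abelianization. Since a finite abelian group admits no nontrivial homomorphism to $\Z$, this gives $\Hom(\Sp_{2g}(\Z,L),\Z) = 0$, and hence $\HH^1(\Sp_{2g}(\Z,L);\Z) = 0$, as desired.

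The deep input, and the step I expect to be the main obstacle, is the analytic fact that $\Sp_{2g}(\R)$ has property (T) for $g \geq 2$; this is precisely the content supplied by Kazhdan's work. The remaining ingredients — descent of property (T) to lattices and to finite-index subgroups, and the finiteness of the abelianization of a property (T) group — are by now formal and standard facts in the theory, so once property (T) is in hand the conclusion is immediate.
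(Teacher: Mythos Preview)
Your argument is correct and is exactly the standard property~(T) argument that Kazhdan's cited paper supplies. Note, however, that the paper does not give its own proof of this statement at all: it is simply recorded as a known theorem with a citation to \cite{KazhdanT}, so there is nothing further to compare.
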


\begin{theorem}[{Borel, \cite{BorelStability1,BorelStability2}}]
\label{theorem:borel}
Fix $g \geq 3$ and $L \geq 1$.  We then have $\HH_2(\Sp_{2g}(\Z,L);\Q) \cong \Q$.
\end{theorem}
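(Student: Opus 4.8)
The plan is to deduce this from Borel's computation of the stable real cohomology of arithmetic groups, of which $\Sp_{2g}(\Z,L)$ is an example, being an arithmetic subgroup of the $\Q$-algebraic group $\Sp_{2g}$. First I would recall the shape of that theorem. The real points $\Sp_{2g}(\R)$ have maximal compact subgroup $\U(g)$, and the associated symmetric space is the Siegel upper half plane $\Siegel_g = \Sp_{2g}(\R)/\U(g)$; let $X_u$ denote its compact dual. Borel constructs, for every arithmetic subgroup $\Gamma < \Sp_{2g}(\R)$, a natural map $\HH^{\ast}(X_u;\R) \rightarrow \HH^{\ast}(\Gamma;\R)$ and proves that it is an isomorphism in a range of degrees that grows with $g$. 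The decisive feature for us is that both the map and the stable range are \emph{independent of the particular arithmetic subgroup} $\Gamma$: for a finite-index inclusion $\Gamma' < \Gamma$ the restriction map $\HH^{\ast}(\Gamma;\R) \rightarrow \HH^{\ast}(\Gamma';\R)$ is compatible with the two Borel maps, hence is an isomorphism in the stable range. Consequently $\HH^{\ast}(\Sp_{2g}(\Z,L);\R) \cong \HH^{\ast}(\Sp_{2g}(\Z);\R)$ in that range, regardless of $L$.

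It then remains to compute the target in degree $2$. Since $X_u$ is a compact Hermitian symmetric space, its cohomology is computed by the algebra of invariant forms $(\wedge^{\ast}\mathfrak{p}^{\ast})^{\U(g)}$; concretely its Poincar\'e polynomial is $\prod_{i=1}^{g}(1+t^{2i})$, whose $t^2$-coefficient is $1$, so $\HH^2(X_u;\R)$ is one-dimensional, spanned by the K\"ahler class. Hence $\HH^2(X_u;\R) \cong \R$. I would then check that degree $2$ lies inside Borel's stable range once $g \geq 3$: this is exactly the point where the sharp form of the range from \cite{BorelStability2} (rather than the cruder bound of \cite{BorelStability1}) is needed, and it is the step I expect to be the main obstacle to verify honestly, since it requires tracking Borel's explicit numerical bound for the $C_g$ case down to the boundary value $g = 3$.

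Granting this, we obtain $\HH^2(\Sp_{2g}(\Z,L);\R) \cong \R$ for all $L \geq 1$ and $g \geq 3$. To convert this into the stated homological assertion, I would pass to rational coefficients via $\HH^2(\Gamma;\R) \cong \HH^2(\Gamma;\Q) \otimes_{\Q} \R$, so that $\dim_{\Q} \HH^2(\Sp_{2g}(\Z,L);\Q) = 1$, and then use universal coefficients over the field $\Q$ (the duality of Lemma \ref{lemma:duality} in the trivial-coefficient case) to identify $\HH_2(\Sp_{2g}(\Z,L);\Q)$ with the dual of $\HH^2(\Sp_{2g}(\Z,L);\Q)$. This yields $\HH_2(\Sp_{2g}(\Z,L);\Q) \cong \Q$, as desired. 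I note that Theorem \ref{theorem:hsp} already supplies the full-group case and, via the transfer, the lower bound $\dim_{\Q} \HH_2(\Sp_{2g}(\Z,L);\Q) \geq 1$; the genuine content of Borel's theorem here is the matching upper bound, which the transfer cannot provide.
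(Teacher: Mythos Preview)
Your proposal is correct and aligns with the paper's approach: the paper does not give its own proof but simply cites Borel's work, noting in the remark after the statement that the range $g \geq 3$ requires the sharper criterion from \cite{BorelStability2} rather than \cite{BorelStability1}. Your outline accurately reconstructs how one extracts the result from Borel's theorem, and you have correctly identified the delicate point---verifying that degree $2$ lies in the stable range down to $g=3$---which is exactly what the paper flags in its remark.
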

\begin{remark}
The range of $g$ in Theorem \ref{theorem:borel} is better than that claimed
in \cite{BorelStability1}, and follows from a criterion that can be found in \cite{BorelStability2}.
\end{remark}

Recall that $\Pic(\PPAV_g)$ is the set of $\Sp_{2g}(\Z)$-equivariant line bundles on $\Siegel_g$.  
There is a standard line bundle $\PPAVLine_g \in \Pic(\PPAV_g)$ 
whose sections are the Siegel modular forms (of weight $1$ and level $1$; see \cite{FreitagSingular} or \cite{MumfordTata1}).
The fiber of $\PPAVLine_g$ over a point of $\Siegel_g$ can be identified with the space of holomorphic $1$-forms on
the associated abelian variety.  The following theorem gives its first Chern class.  This theorem is folklore and
I am not sure to whom to attribute it.  One proof of it can be found in \cite[Theorem 17.4]{HainModuliTran}.

\begin{theorem}
\label{theorem:ppavlinegen}
For $g \geq 3$, the class $c_1(\PPAVLine_g)$ generates $\HH^2(\Sp_{2g}(\Z);\Z) \cong \Z$.
\end{theorem}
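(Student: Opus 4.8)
The plan is to compute $c_1(\PPAVLine_g)$ as the pullback of a universal Chern class and then pin down its exact divisibility in $\HH^2(\Sp_{2g}(\Z);\Z)$. First I would record that $\HH^2(\Sp_{2g}(\Z);\Z)\cong\Z$: this follows from Theorem \ref{theorem:hsp} (which gives $\HH_1(\Sp_{2g}(\Z);\Z)=0$ and $\HH_2(\Sp_{2g}(\Z);\Z)\cong\Z$) together with the universal coefficients theorem. Since $\Siegel_g$ is contractible, Lemma \ref{lemma:firstchern} places $c_1(\PPAVLine_g)$ in $\HH^2_{\Sp_{2g}(\Z)}(\Siegel_g;\Z)\cong\HH^2(\Sp_{2g}(\Z);\Z)$. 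The geometric input I would use is the description of $\PPAVLine_g$ as the determinant of the Hodge bundle, whose fiber over $\Omega\in\Siegel_g$ is the space of holomorphic $1$-forms on the corresponding abelian variety. Writing $\Siegel_g=\Sp_{2g}(\R)/\U(g)$, this Hodge bundle is the $\Sp_{2g}(\R)$-homogeneous bundle attached to the standard representation of the maximal compact $\U(g)$, so $\PPAVLine_g$ corresponds to $\det\colon\U(g)\to\U(1)$. Consequently $c_1(\PPAVLine_g)$ is the image of a generator of $\HH^2(B\U(g);\Z)\cong\Z$ under the map induced by $\Sp_{2g}(\Z)\hookrightarrow\Sp_{2g}(\R)\simeq\U(g)$. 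Thus $c_1(\PPAVLine_g)=n\cdot x$ for a fixed generator $x$ of $\HH^2(\Sp_{2g}(\Z);\Z)$ and some $n\geq0$, and the whole problem is to show $n=1$.

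Second, I would dispose of $n\neq0$, i.e.\ rational nonvanishing, using Borel's computation of stable cohomology (the source of Theorem \ref{theorem:borel}): the stable rational cohomology of $\Sp_{2g}(\Z)$ is a polynomial algebra whose unique degree-two generator is precisely the Hodge class $c_1(\PPAVLine_g)$. Hence $c_1(\PPAVLine_g)$ spans $\HH^2(\Sp_{2g}(\Z);\Q)\cong\Q$, so $n\geq1$.

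Third, and this is the \emph{crux}, I must prove indivisibility: that $n=1$ rather than some larger integer. Equivalently, I would exhibit a class $\sigma\in\HH_2(\Sp_{2g}(\Z);\Z)$ with $\langle c_1(\PPAVLine_g),\sigma\rangle=\pm1$, which forces $n\mid1$. The natural source of such a $\sigma$ is a non-isotrivial family of principally polarized abelian varieties over a closed oriented surface, realized through a representation $\pi_1(\Sigma_h)\to\Sp_{2g}(\Z)$, for which the degree of the determinant of the Hodge bundle is exactly $1$. This degree is governed by Meyer's signature cocycle, which represents $\pm4\,c_1(\PPAVLine_g)$ in $\HH^2(\Sp_{2g}(\Z);\Z)$ and whose pairing against $\sigma$ computes the signature of the total space; a family whose total space has signature $4$ then yields $\langle c_1(\PPAVLine_g),\sigma\rangle=\pm1$. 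The degree itself is a Grothendieck--Riemann--Roch calculation. The main obstacle is exactly this integral normalization: the most transparent family, the universal elliptic curve, lives only over the open modular curve, and $\HH_2(\SL_2(\Z);\Z)$ is torsion and hence blind to the free part, so one must instead produce an honest abelian scheme over a closed surface of signature exactly $4$. Stabilizing into large $g$ and invoking homological stability (compatibility of $x$ across genera) is the cleanest device here, followed by a careful matching of signs and normalization constants.

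Finally, I would note an alternative, more algebro-geometric route permitted by the facts already quoted in the introduction: the map $\Moduli_g\to\PPAV_g$ induces an isomorphism $\Pic(\PPAV_g)\to\Pic(\Moduli_g)$ carrying $\PPAVLine_g$ to $\Line_g$, and Arbarello--Cornalba \cite{ArbarelloCornalba} give $\Pic(\Moduli_g)=\Span{\Line_g}$; hence $\PPAVLine_g$ generates $\Pic(\PPAV_g)$. The remaining (and again crux) point in this route is the surjectivity of $c_1\colon\Pic(\PPAV_g)\to\HH^2(\Sp_{2g}(\Z);\Z)$, since Lemma \ref{lemma:firstchern} by itself only supplies an injection, and an injection $\Z\hookrightarrow\Z$ need not be onto.
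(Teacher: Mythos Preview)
The paper does not prove this theorem; it is quoted from \cite[Theorem 17.4]{HainModuliTran} without argument, so there is no in-paper proof to compare your proposal against.

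Your first route, via Meyer's signature cocycle, is a reasonable strategy and you have correctly isolated its crux: one must exhibit a class $\sigma\in\HH_2(\Sp_{2g}(\Z);\Z)$ on which the signature is exactly $\pm4$. You leave this construction unresolved, so as written the argument is a sketch rather than a proof. (Such classes exist, but producing one and checking all the normalizations against Meyer's formula is genuine work.)

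Your second route is closer to complete and can in fact be finished using results that appear later in this very paper. You correctly observe that Lemma~\ref{lemma:firstchern} supplies only an injection $\Pic(\PPAV_g)\hookrightarrow\HH^2(\Sp_{2g}(\Z);\Z)$, so knowing $\Pic(\PPAV_g)=\Span{\PPAVLine_g}$ is not by itself enough. The missing surjectivity, however, falls out of Lemma~\ref{lemma:modsp}: for $g\geq4$ the map $\HH_2(\Mod_g;\Z)\to\HH_2(\Sp_{2g}(\Z);\Z)$ is an isomorphism, and since $\HH_1$ of both groups vanishes, universal coefficients gives that $\HH^2(\Sp_{2g}(\Z);\Z)\to\HH^2(\Mod_g;\Z)$ is an isomorphism as well. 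Under this map $c_1(\PPAVLine_g)$ goes to $c_1(\Line_g)$, which generates $\HH^2(\Mod_g;\Z)$ by Arbarello--Cornalba (Theorem~\ref{theorem:ppavlinegen2}); hence $c_1(\PPAVLine_g)$ generates $\HH^2(\Sp_{2g}(\Z);\Z)$. Note that this argument, like your invocation of Theorem~\ref{theorem:hsp}, only covers $g\geq4$ (Theorem~\ref{theorem:hsp} asserts $\HH_2(\Sp_{2g}(\Z);\Z)\cong\Z$ only for $g\geq4$), whereas the stated theorem claims $g\geq3$; for $g=3$ one must go back to Hain's original argument. Within this paper the result is only ever applied with $g\geq4$, so this range suffices for the paper's purposes.
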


Define $\PPAVLine_g(L) \in \Pic(\PPAV_g(L))$ to be the pullback of $\PPAVLine_g$.  
For even levels, the transformation formulas for the theta-nulls (see, e.g.,  
\cite[\S I.5]{FreitagSingular}) show that they provide a square root for $\PPAVLine_g(L)$ when $L$ is even.  We
thus have the following divisibility result.

\begin{lemma}
\label{lemma:thetanulls}
For $g \geq 3$ and $L \geq 2$ such that $L$ is even, $\PPAVLine_g(L) \in \Pic(\PPAV_g(L))$ is divisible by $2$.
\end{lemma}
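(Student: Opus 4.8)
The plan is to realize a square root of $\PPAVLine_g(L)$ explicitly, taking the classical theta null as the section that trivializes it. Recall that $\PPAV_g(L) = (\Siegel_g, \Sp_{2g}(\Z,L))$ and that $\PPAVLine_g(L)$ is the $\Sp_{2g}(\Z,L)$-equivariant holomorphic line bundle on $\Siegel_g$ whose automorphy factor is $\det(C\Omega + D)$, where $M = \MatTwoTwo{A}{B}{C}{D} \in \Sp_{2g}(\Z,L)$ acts by $\Omega \mapsto (A\Omega + B)(C\Omega+D)^{-1}$; indeed, sections of its $n$th power are exactly the weight-$n$ Siegel modular forms of level $L$. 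Since $\Siegel_g$ is contractible, every $\Sp_{2g}(\Z,L)$-equivariant holomorphic line bundle on it is trivial as a bundle and is determined by its automorphy factor, a holomorphic $1$-cocycle valued in $\C$. Thus, to show that $\PPAVLine_g(L)$ is divisible by $2$ it suffices to produce a nowhere-vanishing holomorphic function $j'$ on $\Sp_{2g}(\Z,L) \times \Siegel_g$ satisfying the cocycle identity $j'(MM',\Omega) = j'(M, M'\Omega)\, j'(M',\Omega)$ together with $j'(M,\Omega)^2 = \det(C\Omega + D)$: such a $j'$ is the automorphy factor of a line bundle $\mu \in \Pic(\PPAV_g(L))$ with $\mu^{\otimes 2} = \PPAVLine_g(L)$.

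First I would write down the Riemann theta constant $\theta(\Omega) = \sum_{n \in \Z^g} e^{\pi i\, n^t \Omega n}$, a holomorphic function on $\Siegel_g$ that is not identically zero (its value tends to $1$ as the imaginary part of $\Omega$ grows). The classical theta transformation formula (see \cite[\S I.5]{FreitagSingular}) states that for $M$ in the theta group there is an identity $\theta(M\Omega) = j'(M,\Omega)\,\theta(\Omega)$, where $j'(M,\Omega)$ is a fixed holomorphic choice of $\kappa(M)\det(C\Omega+D)^{1/2}$ and $\kappa(M)$ is a root of unity. Because $L$ is even we have $\Sp_{2g}(\Z,L) \subseteq \Sp_{2g}(\Z,2)$, and the latter is contained in the theta group and fixes the zero characteristic; hence the formula applies to every $M \in \Sp_{2g}(\Z,L)$. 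The cocycle identity for $j'$ is then automatic: computing $\theta(MM'\Omega)$ in two ways yields that $j'(MM',\Omega) - j'(M,M'\Omega)j'(M',\Omega)$, multiplied by $\theta(\Omega)$, vanishes identically in $\Omega$, and since $\theta \not\equiv 0$ while both terms are holomorphic, they must coincide.

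The main obstacle is the bookkeeping of the root of unity $\kappa(M)$. A priori one only gets $j'(M,\Omega)^2 = \kappa(M)^2\det(C\Omega+D)$, so $\mu^{\otimes 2}$ differs from $\PPAVLine_g(L)$ by the character $M \mapsto \kappa(M)^2$ of $\Sp_{2g}(\Z,L)$; equivalently, I must check that the weight-one form $\theta^2$ has trivial multiplier system on $\Sp_{2g}(\Z,L)$. This is exactly where evenness of $L$ enters: it forces $M$ into a congruence subgroup deep enough that the characteristic permutation and the eighth-root phase in the theta transformation law degenerate, so that $\kappa(M)^2 = 1$ and hence $j'(M,\Omega)^2 = \det(C\Omega+D)$ exactly. (Even if one only obtained a nontrivial character here, it would be a torsion class, removable by twisting $\mu$ by a character square root, so divisibility by $2$ would still follow; but for $L$ even no twisting is needed.) Granting this, $\mu^{\otimes 2}$ has automorphy factor $\det(C\Omega + D)$, so $\mu^{\otimes 2} = \PPAVLine_g(L)$. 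Finally I would record that $\mu$ is algebraic: it is a holomorphic line bundle on the smooth quasiprojective orbifold $\PPAV_g(L)$ whose square is the algebraic bundle $\PPAVLine_g(L)$, and the theta null provides an algebraic section (theta constants define sections of an algebraic line bundle on the moduli space), so $\mu \in \Pic(\PPAV_g(L))$. This exhibits $\PPAVLine_g(L)$ as divisible by $2$, completing the argument.
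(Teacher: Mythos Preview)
Your approach is the same as the paper's: the paper does not give a detailed proof but simply states that the transformation formulas for the theta-nulls (citing \cite[\S I.5]{FreitagSingular}) provide a square root of $\PPAVLine_g(L)$ when $L$ is even, and your argument is an explicit unpacking of exactly this. One caution: your parenthetical fallback, that a leftover nontrivial character $\kappa^2$ could always be absorbed by twisting $\mu$ by a square root of that character, is not justified in general, since $\HH_1(\Sp_{2g}(\Z,L);\Z)$ has $2$-torsion for $L$ even (Corollary~\ref{corollary:sato}) and hence not every torsion character is a square; fortunately you do not actually rely on this, and your main claim that $\kappa(M)^2=1$ on $\Sp_{2g}(\Z,L)$ for $L$ even is precisely what the cited transformation formulas give.
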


\subsection{The proofs of Theorems \ref{theorem:ppav} and \ref{theorem:ppavzgen} and \ref{theorem:splh2}}
\label{section:proofsppav}

We wish to apply Lemma \ref{lemma:maintool} with $(X,G) = (\Siegel_g,\Sp_{2g}(\Z))$ and $G' = \Sp_{2g}(\Z,L)$
and $\lambda = \PPAVLine_g$ and
$$n = \begin{cases}
1 & \text{if $L$ is odd,} \\
2 & \text{if $L$ is even.}
\end{cases}$$
The results of \S \ref{section:ppavdef}--\ref{section:ppavclassical} 
show that all the conditions of Lemma \ref{lemma:maintool} are satisfied except possibly
the condition that the image $A$ of $\HH_2(\Sp_{2g}(\Z,L);\Z)$ in $\HH_2(\Sp_{2g}(\Z);\Z) \cong \Z$ is
$m \Z$ for some $m$ with $1 \leq m \leq n$, which we now prove (we remark that Lemma \ref{lemma:maintool} will
then show that $m=n$).

We will use the Hochschild-Serre spectral sequence in group homology arising from the short exact sequence
$$1 \longrightarrow \Sp_{2g}(\Z,L) \longrightarrow \Sp_{2g}(\Z) \longrightarrow \Sp_{2g}(\Z/L) \longrightarrow 1.$$
Theorem \ref{theorem:stein} says that $\HH_2(\Sp_{2g}(\Z/L);\Z) = 0$.
Theorem \ref{theorem:edgegroups} thus implies that the $E^{\infty}$ page of our spectral sequence is of the form
\begin{center}
\begin{tabular}{|c@{\hspace{0.2 in}}c@{\hspace{0.2 in}}c}
$A$ & &  \\
$\ast$ & $B$ & \\
$\ast$ & $\ast$ & $0$ \\
\cline{1-3}
\end{tabular}
\end{center}
with $B$ a quotient of $\HH_1(\Sp_{2g}(\Z/L);\HH_1(\Sp_{2g}(\Z,L);\Z))$.  We thus have a short exact sequence
$$0 \longrightarrow A \longrightarrow \HH_2(\Sp_{2g}(\Z);\Z) \longrightarrow B \longrightarrow 0.$$
Recall that our goal is to show that $A = \HH_2(\Sp_{2g}(\Z);\Z)$ (i.e.\ that $B=0$) if $L$ is odd and that
$A$ has index at most $2$ in $\HH_2(\Sp_{2g}(\Z);\Z)$ (i.e.\ that $B$ is either $0$ or $\Z/2$) if $L$ is even 
(by the above it will then follow that in fact $B \cong \Z/2$).  Since $B$ is a quotient of $\HH_1(\Sp_{2g}(\Z/L);\HH_1(\Sp_{2g}(\Z,L);\Z))$, it is enough
to show that $\HH_1(\Sp_{2g}(\Z/L);\HH_1(\Sp_{2g}(\Z,L);\Z))$ is $0$ if $L$ is odd and is either $0$ or $\Z/2$ if $L$
is even.

If $L$ is odd, then Corollary \ref{corollary:splabel} says that $\HH_1(\Sp_{2g}(\Z,L);\Z) \cong \SpLie_{2g}(\Z/L)$,
so we can apply Theorem \ref{theorem:adjointcoho} to deduce that
$$\HH_1(\Sp_{2g}(\Z/L);\HH_1(\Sp_{2g}(\Z,L);\Z)) \cong \HH_1(\Sp_{2g}(\Z/L);\SpLie_{2g}(\Z/L)) = 0.$$
If $L$ is even, then Corollary \ref{corollary:sato} gives a short exact sequence
$$0 \longrightarrow \HH_1(\Sigma_g;\Z/2) \longrightarrow \HH_1(\Sp_{2g}(\Z,L);\Z) \longrightarrow \SpLie_{2g}(\Z/L)
\longrightarrow 0$$
of $\Sp_{2g}(\Z/L)$-modules.  The long exact sequence in $\Sp_{2g}(\Z/L)$-homology contains the exact sequence
$$\HH_1(\Sp_{2g}(\Z/L);\HH_1(\Sigma_g;\Z/2)) \longrightarrow \HH_1(\Sp_{2g}(\Z/L);\HH_1(\Sp_{2g}(\Z,L);\Z))
\longrightarrow \HH_1(\Sp_{2g}(\Z/L);\SpLie_{2g}(\Z/L)).$$
Using Theorems \ref{theorem:adjointcoho} and \ref{theorem:hcoho}, this reduces to
$$\Z/2 \longrightarrow \HH_1(\Sp_{2g}(\Z/L);\HH_1(\Sp_{2g}(\Z,L);\Z)) \longrightarrow 0,$$
so $\HH_1(\Sp_{2g}(\Z/L);\HH_1(\Sp_{2g}(\Z,L);\Z))$ is either $0$ or $\Z/2$, as desired.

\section{The Torelli group}
\label{section:torelli}

\subsection{Definition and generators}

To prove our theorems about the moduli space of curves, we will need some results about the Torelli group
(see \cite{JohnsonSurvey} for a survey).
Fix some $g \geq 1$ and $b \leq 1$.  Recall that the Torelli group $\Torelli_{g,b}$ is the subgroup of $\Mod_{g,b}$
consisting of mapping classes that act trivially on $\HH_1(\Sigma_{g,b};\Z)$.  
We thus have a short exact sequence
$$1 \longrightarrow \Torelli_{g,b} \longrightarrow \Mod_{g,b} \longrightarrow \Sp_{2g}(\Z) \longrightarrow 1.$$
For several proofs that the map $\Mod_{g,b} \rightarrow \Sp_{2g}(\Z)$ is surjective, see \cite[\S 7.1]{FarbMargalitBook}. 

Two key types of elements of $\Torelli_{g,b}$ are as follows.  Let $T_x$ denote the Dehn twist about a simple closed
curve $x$.
\begin{enumerate}
\item If $x$ is a non-nullhomotopic separating curve (see Figure \ref{figure:torelli}.a), 
then $x$ is nullhomologous, and it follows that $T_x \in \Torelli_{g,b}$.  We
will call $T_x$ a {\em separating twist}.
\item If $\{y,z\}$ is a pair of disjoint non-homotopic simple closed nonseparating curves such that $y \cup z$
separates $\Sigma_{g,b}$ (see Figure \ref{figure:torelli}.a), then $y$ and $z$ are homologous.  Thus $T_y$ and
$T_z$ act in the same way on $\HH_1(\Sigma_{g,b};\Z)$, so $T_y T_{z}^{-1} \in \Torelli_{g,b}$.  We will
call $\{y,z\}$ a {\em bounding pair} and $T_y T_z^{-1}$ a {\em bounding pair map}.
\end{enumerate}
Following work of Birman \cite{BirmanSiegel}, it was proven by Powell \cite{PowellTorelli} that $\Torelli_{g,b}$
is generated by the set of all bounding pair maps and separating twists.
For $g \geq 3$, this was improved by Johnson.

\begin{theorem}[{Johnson, \cite{JohnsonGen}}]
\label{theorem:torelligen}
For $g \geq 3$ and $b \leq 1$, the group $\Torelli_{g,b}$ is generated by bounding pair maps.
\end{theorem}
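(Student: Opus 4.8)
The plan is to reduce the statement to a single relation among Dehn twists and then exploit the lantern relation. By the theorem of Powell cited above, $\Torelli_{g,b}$ is generated by separating twists together with bounding pair maps, so it suffices to prove that for $g \geq 3$ every separating twist is a product of bounding pair maps. Two general facts make this reduction manageable: conjugating a bounding pair map by an arbitrary element of $\Mod_{g,b}$ yields another bounding pair map, and $\Mod_{g,b}$ acts transitively on the set of separating simple closed curves of each topological type (the type being recorded by the genera of the two complementary pieces). Hence I would only need to handle one separating curve of each type.

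The main tool is the lantern relation. Embedding a four-holed sphere $L$ in $\Sigma_{g,b}$ with boundary curves $c_0, c_1, c_2, c_3$, there are three essential interior curves $c_{12}, c_{13}, c_{23}$, where $c_{ij}$ encircles $c_i$ and $c_j$, and the lantern relation reads
$$T_{c_0} T_{c_1} T_{c_2} T_{c_3} = T_{c_{12}} T_{c_{13}} T_{c_{23}}.$$
I would choose the embedding so that $c_0$ is the prescribed separating curve. The homological constraints then become automatic: since $[c_0] = 0$ and $[c_0]+[c_1]+[c_2]+[c_3]=0$ in $\HH_1(\Sigma_{g,b};\Z)$, we get $[c_{12}] = [c_1]+[c_2] = -[c_3]$, and likewise $[c_{13}] = -[c_2]$ and $[c_{23}] = -[c_1]$. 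Because the boundary curves $c_1, c_2, c_3$ are disjoint from the interior curves, the corresponding twists commute, and I can rearrange the relation into
$$T_{c_0} = (T_{c_{12}} T_{c_3}^{-1})(T_{c_{13}} T_{c_2}^{-1})(T_{c_{23}} T_{c_1}^{-1}).$$
Each factor $T_{c_{ij}} T_{c_k}^{-1}$ is a bounding pair map provided $c_{ij}$ and $c_k$ are disjoint, non-isotopic, homologous, nonseparating curves, which holds as soon as all six curves $c_1, c_2, c_3, c_{12}, c_{13}, c_{23}$ are nonseparating.

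It therefore remains to produce, for a separating curve $c_0$ of any prescribed type, an embedded four-holed sphere with $c_0$ as a boundary curve and with the other six curves nonseparating. I would place $L$ on whichever complementary side of $c_0$ has genus at least $2$ (such a side exists once $g \geq 3$), arranging $c_1, c_2, c_3$ so that their complement on that side is a single connected subsurface; an Euler-characteristic count shows this subsurface has genus $(\text{genus of that side}) - 2 \geq 0$. Connectivity of this subsurface forces each $c_i$ to be nonseparating, and then $[c_{ij}] = -[c_k] \neq 0$ forces each $c_{ij}$ to be nonseparating as well. The case $b = 1$ is identical, keeping $L$ disjoint from $\partial \Sigma_{g,b}$.

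The step I expect to be the main obstacle is this last one: the explicit construction of the embedded lantern realizing a given separating curve, together with the verification that all six auxiliary curves are nonseparating, uniformly over all topological types and down to the boundary case $g = 3$ (where the relevant complementary subsurface degenerates to a pair of pants). This is a change-of-coordinates argument, conceptually routine but where the hypothesis $g \geq 3$ is genuinely used; for $g = 2$ no such configuration exists, consistent with the fact that the theorem fails there.
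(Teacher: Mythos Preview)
The paper does not prove this theorem; it simply cites Johnson's original paper \cite{JohnsonGen}. Your proposal is precisely Johnson's argument, and it is correct. One small point you list but do not explicitly verify is that $c_{ij}$ and $c_k$ are non-isotopic; in your construction this follows because cutting $\Sigma_{g,b}$ along $c_{ij}\cup c_k$ yields two pieces, one of which (the piece containing $c_1,c_2$ and the connected complement $S_1\setminus L$) has genus $h_1-1\geq 1$ and hence is not an annulus. Alternatively, if some pair happened to be isotopic the corresponding factor would be trivial and could simply be dropped, so the conclusion survives in any case.
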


\begin{remark}
Later, Johnson \cite{JohnsonFinite} proved that $\Torelli_{g,b}$ is generated by finitely many bounding
pair maps for $g \geq 3$.  This should be contrasted with work of McCullough and Miller \cite{McCulloughMiller},
who showed that $\Torelli_2$ is {\em not} finitely generated.  It is still unknown whether or not
$\Torelli_{g,b}$ is finitely presentable for $g \geq 3$.
\end{remark}

\subsection{Abelianization}

\subsubsection{The Johnson homomorphism}
We now turn to the abelianization of $\Torelli_{g,b}$.  There are two key abelian quotients of $\Torelli_{g,b}$.  The
first is the {\em Johnson homomorphism}, which was constructed by Johnson in \cite{JohnsonHomo}.  The precise
definition of this homomorphism will not be needed in this paper (it comes from the action 
of $\Torelli_{g,b}$ on the second nilpotent truncation of $\pi_1(\Sigma_{g,b})$), so
we only list the properties of it that we need.
Letting $H = \HH_1(\Sigma_{g,b};\Z)$, it is a surjective $\Mod_{g,b}$-equivariant homomorphism of the form
$$\tau : \Torelli_{g,1} \longrightarrow \wedge^3 H \quad \text{or} \quad \tau : \Torelli_{g} \longrightarrow (\wedge^3 H)/H,$$
depending on whether $b$ is $1$ or $0$.  
Here $H$ is embedded in $\wedge^3 H$ as follows.  Let $\{a_1,b_1,\ldots,a_g,b_g\}$ be a symplectic basis for $H$.  The
element $\omega = a_1 \wedge b_1 + \cdots + a_g \wedge b_g \in \wedge^2 H$ is then independent of the basis, and we have
an injection $H \hookrightarrow \wedge^3 H$ that takes $h \in H$ to $h \wedge \omega$.  

\subsubsection{The Birman-Craggs-Johnson homomorphism}
The second abelian quotient of $\Torelli_{g,b}$ is given by the Birman-Craggs-Johnson homomorphism.  In
\cite{BirmanCraggs}, Birman and Craggs constructed a large family of $\Z/2$ quotients of $\Torelli_{g,b}$ using
the Rochlin invariant of homology 3-spheres.  Later,
in \cite{JohnsonBirmanCraggs}, Johnson packaged all these abelian quotients into one homomorphism.  Its target
is easiest to describe when $b = 1$.  Let $\Omega$ be the set of all $\Z/2$-quadratic forms on $\HH_1(\Sigma_{g,b};\Z/2)$
inducing the standard intersection form.  In other words, $\Omega$ is the set of all maps 
$f : \HH_1(\Sigma_{g,b};\Z/2) \rightarrow \Z/2$ satisfying $f(x+y)=f(x)+f(y)+i(x,y)$ for $x,y \in \HH_1(\Sigma_{g,b};\Z/2)$, 
where $i(\cdot,\cdot)$ is the algebraic intersection form.  The target of the Birman-Craggs-Johnson homomorphism
is a certain additive subspace of the ring $\Map(\Omega,\Z/2)$ of $\Z/2$-valued functions on $\Omega$.

For each $x \in \HH_1(\Sigma_{g,b};\Z/2)$, there is a function $\overline{x} : \Omega \rightarrow \Z/2$ that 
takes $f \in \Omega$ to $f(x)$.  It is clear that $\overline{x}^2 = \overline{x}$ and
$\overline{x+y} = \overline{x} + \overline{y} + i(x,y)$ for $x,y \in \HH_1(\Sigma_{g,b};\Z/2)$; here $i(x,y) \in \Z/2$
denotes the constant function.  
Motivated by this, we make the following definitions.
Let $B'(2g)$ denote the ring of polynomials with $\Z/2$ coefficients in the (formal) symbols 
$\{\text{$\overline{x}$ $|$ $x \in \HH_1(\Sigma_{g,b};\Z/2)$, $x \neq 0$}\}$.  Next, let $B(2g)$ denote the quotient of
$B'(2g)$ by the ideal generated by the set of relations
\begin{align*}
\overline{x}^2 = \overline{x} \quad & \text{for $x \in \HH_1(\Sigma_{g,b};\Z/2)$ with $x \neq 0$}, \\
\overline{x+y} = \overline{x} + \overline{y} + i(x,y) \quad & \text{for $x,y \in \HH_1(\Sigma_{g,b};\Z/2)$ with $x,y \neq 0$ and $x \neq y$}.
\end{align*}
One can view elements of $B(2g)$ as ``square-free'' $\Z/2$-polynomials in a fixed basis for $\HH_1(\Sigma_g;\Z/2)$.  As such,
element of $B(2g)$ have a natural degree.  Let $B_n(2g)$ denote the elements of $B(2g)$ of degree at most $n$.  The group
$B_n(2g)$ is an abelian $2$-group, and the group $\Sp_{2g}(\Z)$ acts on it.  For a nonzero $x \in \HH_1(\Sigma_{g,b};\Z/2)$
and $f \in \Sp_{2g}(\Z)$, this action takes $\overline{x}$ to $\overline{f(x)}$.  There is also a natural
$\Sp_{2g}(\Z)$-equivariant ring homomorphism $B(2g) \rightarrow \Map(\Omega,\Z/2)$.

Johnson proved that the map $B_n(2g) \rightarrow \Map(\Omega,\Z/2)$ is injective for $n \leq 3$, and the image
of $B_3(2g)$ in $\Map(\Omega,\Z/2)$ is the target
of the Birman-Craggs-Johnson homomorphism on a surface with one boundary component.  Summing up, the
Birman-Craggs-Johnson homomorphism on a surface with one boundary component is a surjective homomorphism
$$\sigma : \Torelli_{g,1} \longrightarrow B_3(2g)$$
that is $\Mod_{g,1}$-equivariant.

For closed surfaces, we have to take a further quotient of $B_3(2g)$.  Associated to every $f \in \Omega$ is its
{\em Arf invariant}, which can be computed as follows.  Let $\{a_1,b_1,\ldots,a_g,b_g\}$ be a symplectic basis
for $\HH_1(\Sigma_{g,b};\Z/2)$.  Then the Arf invariant of $f$ is $f(a_1) f(b_1) + \cdots + f(a_g) f(b_g)$.  One
can show that this is independent of the symplectic basis and that two elements of $\Omega$ are in the same
$\Sp_{2g}(\Z/2)$ orbit if and only if they have the same Arf invariant.  Let $\Omega_0 \subset \Omega$ be the
set of quadratic forms of Arf invariant $0$.  There is then a restriction map 
$\pi : \Map(\Omega,\Z/2) \rightarrow \Map(\Omega_0,\Z/2)$.  Let $\overline{B}_n(2g) = \pi(B_n(2g))$ for $n \leq 3$.  The
Birman-Craggs-Johnson homomorphism on a closed surface is a surjective $\Mod_g$-equivariant homomorphism
$$\sigma : \Torelli_{g} \longrightarrow \overline{B}_3(2g).$$
See \cite{JohnsonBirmanCraggs} for a calculation of the dimension of $\overline{B}_3(2g)$.

\subsubsection{Combining the abelian quotients}
As maps to abelian groups, both the Johnson and the Birman-Craggs-Johnson homomorphisms factor through $\HH_1(\Torelli_{g,b};\Z)$,
and we will frequently regard them as homomorphisms with domain $\HH_1(\Torelli_{g,b};\Z)$.
In \cite{JohnsonAbel}, Johnson proved that the Johnson homomorphisms and the Birman-Craggs-Johnson homomorphisms
combine to give the abelianization of $\Torelli_{g,b}$ when $g \geq 3$.  However, they are not independent.
Let $H = \HH_1(\Sigma_{g,b};\Z)$ and $H_2 = \HH_1(\Sigma_{g,b};\Z/2)$.  Observe that $B_3(2g) / B_2(2g) \cong \wedge^3 H_2$.  
Johnson proved the following theorem.

\begin{theorem}[{Johnson, \cite{JohnsonAbel}}]
\label{theorem:johnsonabel1}
For $g \geq 3$, there is a commutative diagram
\begin{equation}
\label{eqn:johnsonhomobcghomodiagram}
\begin{CD}
0 @>>> B_2(2g) @>>> \HH_1(\Torelli_{g,1};\Z) @>{\tau}>> \wedge^3 H @>>> 0\\
@.     @VV{=}V      @VV{\sigma}V                        @VVV \\
0 @>>> B_2(2g) @>>> B_3(2g)                  @>>>       \wedge^3 H_2 @>>> 0
\end{CD}
\end{equation}
In particular, $\HH_1(\Torelli_{g,1};\Z) \cong B_2(2g) \oplus \wedge^3 H$.
\end{theorem}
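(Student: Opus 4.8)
The plan is to package $\tau$ and $\sigma$ into a single homomorphism and to identify its target. The first point is that $\tau$ and $\sigma$ are compatible modulo $2$: composing $\sigma$ with the quotient $B_3(2g) \to B_3(2g)/B_2(2g) \cong \wedge^3 H_2$ recovers the reduction of $\tau$ modulo $2$. I would verify this on the standard generators, where Johnson's formulas for $\tau$ and for the Birman-Craggs-Johnson homomorphism are explicit. Granting it, the pair $(\tau,\sigma)$ factors through the fiber product $P = \wedge^3 H \times_{\wedge^3 H_2} B_3(2g)$. Projection to the first factor gives a short exact sequence $0 \to B_2(2g) \to P \to \wedge^3 H \to 0$ (the kernel is $\ker(B_3(2g) \to \wedge^3 H_2) = B_2(2g)$), and since $\wedge^3 H$ is free abelian this splits, so $P \cong B_2(2g) \oplus \wedge^3 H$. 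Thus the whole theorem, diagram included, reduces to proving that $(\tau,\sigma) \colon \HH_1(\Torelli_{g,1};\Z) \to P$ is an isomorphism; the top row of \eqref{eqn:johnsonhomobcghomodiagram} is then just the restriction of the projection $P \to \wedge^3 H$ along this isomorphism.

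Surjectivity onto $P$ is the easier half. Because $\tau$ is surjective one can match the $\wedge^3 H$-coordinate of any target class, and the compatibility modulo $2$ shows the residual discrepancy lies in $B_2(2g)$; it then suffices to check that $\sigma(\ker\tau)$ fills out $B_2(2g)$. Separating twists lie in $\ker\tau$, and a direct computation with the defining relations of $B(2g)$ shows that their $\sigma$-images, together with those of genus-one bounding pair maps, span $B_2(2g)$.

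The genuine obstacle is injectivity, which is the heart of Johnson's argument in \cite{JohnsonAbel}: one must show that $\tau$ and $\sigma$ detect every nontrivial class, so that $\Torelli_{g,1}$ admits no further abelian invariants. The difficulty is that $\Torelli_{g,1}$ is not known to be finitely presentable, so there is no presentation to read the answer off of. Instead I would start from Johnson's finite generating set of bounding pair maps (Theorem \ref{theorem:torelligen} and its finiteness refinement) and the known relations among them, organized by the topological type of the bounding pairs. The strategy is to show that every relation needed to cut $\HH_1(\Torelli_{g,1};\Z)$ down to $P$ already follows from relations supported in subsurfaces of small genus, reducing the claim to an explicit finite computation; the $\Sp_{2g}(\Z)$-equivariance of $\tau$ and $\sigma$ together with an inductive argument on $g$ then propagates this computation to all $g \geq 3$.
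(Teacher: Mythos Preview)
The paper does not prove this theorem; it is stated as Johnson's result with a citation to \cite{JohnsonAbel} and used as a black box. So there is no ``paper's own proof'' to compare against beyond the citation itself.

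Your outline is a reasonable reorganization of Johnson's result: packaging $\tau$ and $\sigma$ as a map to the fiber product $P = \wedge^3 H \times_{\wedge^3 H_2} B_3(2g)$ is exactly the right way to encode the compatibility, and the identification $P \cong B_2(2g) \oplus \wedge^3 H$ via freeness of $\wedge^3 H$ is correct and gives the diagram immediately once $(\tau,\sigma)$ is known to be an isomorphism. The surjectivity argument you sketch is fine.

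The gap is entirely in the injectivity paragraph, and you essentially acknowledge this by pointing back to \cite{JohnsonAbel}. What you wrote there is not a proof sketch so much as a description of the \emph{shape} a proof might take. Johnson's actual argument is long and does not proceed by induction on $g$ in the way you suggest; rather, he first shows (in \cite{JohnsonFinite} and the companion paper on the structure of the Torelli group) that the kernel of $\tau$ is generated by separating twists, and then computes the abelianization of that kernel modulo the action of bounding pair maps. The phrase ``reduces the claim to an explicit finite computation'' hides essentially all of the content of \cite{JohnsonAbel}. If your intent is simply to explain why the commutative diagram and the direct-sum decomposition follow formally from the statement ``$(\tau,\sigma)$ is an isomorphism onto $P$,'' that part is correct and useful; but you should not present the injectivity paragraph as an independent argument---it is a citation dressed up as a sketch.
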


\begin{remark}
The top exact sequence in \eqref{eqn:johnsonhomobcghomodiagram} does {\em not} split in a manner compatible
with the action of $\Sp_{2g}(\Z)$ on $\HH_1(\Torelli_{g,b};\Z)$.  Thus the decomposition
$\HH_1(\Torelli_{g,1};\Z) \cong B_2(2g) \oplus \wedge^3 H$ is not an isomorphism of $\Sp_{2g}(\Z)$-modules.
\end{remark}

Similarly, one can show that $\overline{B}_3(2g) / \overline{B}_2(2g) \cong (\wedge^3 H_2) / H_2$, and Johnson
proved the following.

\begin{theorem}[{Johnson, \cite{JohnsonAbel}}]
For $g \geq 3$, there is a commutative diagram
\begin{equation}
\label{eqn:johnsonhomobcghomodiagram2}
\begin{CD}
0 @>>> \overline{B}_2(2g) @>>> \HH_1(\Torelli_{g};\Z) @>{\tau}>> (\wedge^3 H) / H @>>> 0\\
@.     @VV{=}V                 @VV{\sigma}V                      @VVV \\
0 @>>> \overline{B}_2(2g) @>>> \overline{B}_3(2g)     @>>>       (\wedge^3 H_2) / H_2 @>>> 0
\end{CD}
\end{equation}
In particular, $\HH_1(\Torelli_g;\Z) \cong \overline{B}_2(2g) \oplus (\wedge^3 H) / H$.
\end{theorem}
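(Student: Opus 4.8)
The plan is to deduce the closed case from the bounded case already recorded in Theorem \ref{theorem:johnsonabel1}, using the Torelli-group version of the Birman exact sequence to realize $\Torelli_g$ as a quotient of $\Torelli_{g,1}$ and then pushing this quotient through to abelianizations. Throughout we keep Johnson's hypothesis $g \geq 3$, which is what makes the bounded-case diagram \eqref{eqn:johnsonhomobcghomodiagram} available.

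First I would set up the exact sequence. Capping the boundary component of $\Sigma_{g,1}$ with a disk induces a surjection $\Mod_{g,1} \rightarrow \Mod_g$ whose kernel is the disk-pushing subgroup, isomorphic to $\pi_1(\mathrm{UT}\Sigma_g)$, the fundamental group of the unit tangent bundle (a central $\Z$-extension of $\pi_1(\Sigma_g)$, the central $\Z$ being generated by the boundary Dehn twist $T_\partial$). Point-pushing acts trivially on $\HH_1(\Sigma_g;\Z)$ and $T_\partial$ is a separating twist, so this subgroup lies in $\Torelli_{g,1}$; restricting gives a short exact sequence
\[
1 \longrightarrow \pi_1(\mathrm{UT}\Sigma_g) \longrightarrow \Torelli_{g,1} \longrightarrow \Torelli_g \longrightarrow 1.
\]
The group $\Torelli_g$ acts trivially on $\HH_1(\mathrm{UT}\Sigma_g)$ (trivially on the central fiber class and trivially on the quotient $H = \HH_1(\Sigma_g;\Z)$ by the defining property of the Torelli group, hence trivially on all of $\HH_1$), so its coinvariants are the whole group, and the tail of the five-term exact sequence reads
\[
\HH_1(\mathrm{UT}\Sigma_g) \longrightarrow \HH_1(\Torelli_{g,1};\Z) \longrightarrow \HH_1(\Torelli_g;\Z) \longrightarrow 0.
\]
Thus $\HH_1(\Torelli_g;\Z)$ is the quotient of $\HH_1(\Torelli_{g,1};\Z)$ by the image $N$ of $\HH_1(\mathrm{UT}\Sigma_g)$, and everything reduces to identifying $N$ inside the description of $\HH_1(\Torelli_{g,1};\Z)$ furnished by \eqref{eqn:johnsonhomobcghomodiagram}. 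Note that the quotient description itself is automatic from exactness, so there is no residual $\HH_2$-transgression subtlety in this step.

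Next I would compute $N$ using Johnson's explicit evaluations of $\tau$ and $\sigma$. The group $\HH_1(\mathrm{UT}\Sigma_g)$ is generated by the classes of the point-pushes $\mathrm{push}(\gamma)$ for $\gamma \in \pi_1(\Sigma_g)$ together with the central class $[T_\partial]$. Johnson's formula $\tau(\mathrm{push}(\gamma)) = [\gamma] \wedge \omega$ shows that under the Johnson homomorphism $N$ maps onto the distinguished copy of $H$ inside $\wedge^3 H$, while $\tau([T_\partial]) = 0$ because $T_\partial$ is a separating twist; this is exactly the subgroup one quotients by to pass from $\wedge^3 H$ to $(\wedge^3 H)/H$ (and, modulo $2$, from $\wedge^3 H_2$ to $(\wedge^3 H_2)/H_2$). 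On the Birman-Craggs-Johnson side, $\sigma([T_\partial])$ is the Arf element $\sum_{i} \overline{a_i}\,\overline{b_i} \in B_2(2g)$, which as a function on $\Omega$ computes the Arf invariant and hence vanishes on $\Omega_0$, so it lies in $\ker(B_3(2g) \rightarrow \overline{B}_3(2g))$. Combining this with the $\sigma$-images of the point-pushes, one checks that $\sigma(N)$ is exactly $\ker(B_3(2g) \rightarrow \overline{B}_3(2g))$, compatibly with the degree filtration. Feeding these identifications into the five-term quotient converts the two rows of \eqref{eqn:johnsonhomobcghomodiagram} into the barred rows of \eqref{eqn:johnsonhomobcghomodiagram2}, and the splitting $\HH_1(\Torelli_g;\Z) \cong \overline{B}_2(2g) \oplus (\wedge^3 H)/H$ descends from the bounded-case splitting.

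The hard part will be the explicit computation of $\sigma$ and $\tau$ on the disk-pushing subgroup, and in particular verifying that $\sigma(N)$ is precisely $\ker(B_3(2g) \rightarrow \overline{B}_3(2g))$ rather than something strictly smaller or larger. This requires Johnson's description of point-pushes as products of bounding-pair maps and his evaluation of the Birman-Craggs-Johnson homomorphism on them, together with the identification $\overline{B}_3(2g)/\overline{B}_2(2g) \cong (\wedge^3 H_2)/H_2$ (stated as ``one can show'' in the excerpt), which must be established and then matched against the image of $N$ in the associated graded. The remaining ingredients — surjectivity of $\Torelli_{g,1} \rightarrow \Torelli_g$, centrality of $T_\partial$, and triviality of the $\Torelli_g$-action on $\HH_1(\mathrm{UT}\Sigma_g)$ — are routine and feed only into the formal five-term reduction.
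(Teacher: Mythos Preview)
The paper does not prove this theorem: it is stated with a citation to Johnson \cite{JohnsonAbel} and no argument is given, just as with the bounded case (Theorem \ref{theorem:johnsonabel1}). So there is no proof in the paper to compare your proposal against.

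That said, your approach is the standard and correct way to pass from the bounded case to the closed case, and it is essentially how Johnson himself relates the two in \cite{JohnsonAbel}. The Birman exact sequence for Torelli, the triviality of the $\Torelli_g$-action on $\HH_1(\mathrm{UT}\Sigma_g;\Z)$, the formula $\tau(\mathrm{push}(\gamma)) = [\gamma] \wedge \omega$, and the identification of $\sigma(T_\partial)$ with the Arf element are all correct. The one place where you are right to be cautious is the verification that $\sigma(N)$ is \emph{exactly} $\ker(B_3(2g) \rightarrow \overline{B}_3(2g))$: Johnson computes in \cite{JohnsonBirmanCraggs} and \cite{JohnsonAbel} that this kernel is generated by the Arf element together with the images of $H_2$ inside $B_3(2g)$ (via $h \mapsto \overline{h} \cdot \text{Arf}$ on the degree filtration), and matching this against the $\sigma$-images of point-pushes requires the explicit bounding-pair decomposition you mention. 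Once that check is done, the rest is formal.
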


\subsection{Coinvariants}

Recall that if a group $G$ acts on an abelian group $A$, then the coinvariants of that action are denoted $A_G$.  The
goal of this section is to prove Proposition \ref{proposition:torellicoinvariants} below, which gives
$(\HH_1(\Torelli_{g,b};\Z))_{\Sp_{2g}(\Z,L)}$.  This is proceeded by two lemmas.  In the statement of the first one,
we will write $\Sp_{2g}(\Z,1)$ and $\Mod_{g}(1)$ for $\Sp_{2g}(\Z)$ and $\Mod_{g}$, respectively.  

\Figure{figure:torelli}{Torelli}{a. A separating twist and a bounding pair map \CaptionSpace
b--c. The crossed lantern relation $(T_{y_1}T_{y_2}^{-1})(T_{x_1}T_{x_2}^{-1})=(T_{z_1}T_{z_2}^{-1})$.}

\begin{lemma}
\label{lemma:kill2torsion}
For $g \geq 3$ and $b \leq 1$ and $L \geq 1$, let $v \in (\HH_1(\Torelli_{g,b};\Z))_{\Sp_{2g}(\Z,L)}$.  
Then $L \cdot v = 0$.
\end{lemma}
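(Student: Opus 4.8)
The plan is to prove the \emph{stronger} statement that
$$L \cdot \HH_1(\Torelli_{g,b};\Z) \subseteq I,$$
where $I$ denotes the subgroup generated by all elements $\gamma \cdot x - x$ with $\gamma \in \Sp_{2g}(\Z,L)$ and $x \in \HH_1(\Torelli_{g,b};\Z)$ (here $\Sp_{2g}(\Z,L)$ acts on $\HH_1(\Torelli_{g,b};\Z)$ by conjugation in $\Mod_{g,b}$, as in our naturality convention). Since $(\HH_1(\Torelli_{g,b};\Z))_{\Sp_{2g}(\Z,L)} = \HH_1(\Torelli_{g,b};\Z)/I$, this gives $L \cdot v = 0$ for every $v$ in the coinvariants. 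By Theorem \ref{theorem:torelligen}, for $g \geq 3$ the group $\Torelli_{g,b}$ is generated by bounding pair maps, so it suffices to show $L \cdot [\phi] \in I$ for the class $[\phi]$ of an arbitrary bounding pair map $\phi = T_y T_z^{-1}$.

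The key observation is that $L\cdot[\phi]$ can be realized as $[\phi^L]$ in the abelianization and that $\phi^L$ factors as a single commutator. Since $y$ and $z$ are disjoint, $T_y$ and $T_z$ commute, so $\phi^L = T_y^L T_z^{-L}$. The curves $y$ and $z$ are nonseparating and homologous, so, using the standard fact that $\Torelli_{g,b}$ acts transitively on the isotopy classes of nonseparating simple closed curves in a fixed homology class, there is an element $\beta \in \Torelli_{g,b}$ with $\beta(z) = y$. Then $\beta T_z^L \beta^{-1} = T_y^L$, whence
$$\phi^L = T_y^L T_z^{-L} = \beta T_z^L \beta^{-1} T_z^{-L} = \bigl(T_z^L \beta T_z^{-L} \beta^{-1}\bigr)^{-1}.$$
Let $\gamma \in \Sp_{2g}(\Z)$ be the image of $T_z^L$; since $T_z$ acts on $\HH_1(\Sigma_{g,b};\Z)$ as a transvection, $\gamma$ is the $L$-th power of that transvection and hence lies in $\Sp_{2g}(\Z,L)$. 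Because inner automorphisms act trivially on $\HH_1(\Torelli_{g,b};\Z)$, the action of $\gamma$ on $[\beta]$ is computed by conjugating the lift $T_z^L$, so passing to $\HH_1(\Torelli_{g,b};\Z)$ the identity above becomes
$$L \cdot [\phi] = [\phi^L] = -\bigl(\gamma \cdot [\beta] - [\beta]\bigr) \in I,$$
as desired.

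The one point I would take care to justify is the existence of the Torelli element $\beta$ carrying $z$ to $y$; this is the main input, and I would supply it by invoking transitivity of the Torelli group on nonseparating curves of a fixed homology class, which holds in our range $g \geq 3$. I find it attractive that the argument is completely uniform in the parity of $L$: because it manipulates group elements directly rather than their images under the Johnson and Birman--Craggs--Johnson homomorphisms, the factor of $L$ is produced simply by passing from $\phi$ to $\phi^L$, and no separate treatment of the $2$-torsion subgroup $B_2(2g) \subseteq \HH_1(\Torelli_{g,1};\Z)$ is needed. As a sanity check, the same computation at $L = 1$ recovers the vanishing of $(\HH_1(\Torelli_{g,b};\Z))_{\Sp_{2g}(\Z)}$.
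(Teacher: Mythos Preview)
Your argument is correct and takes a genuinely different route from the paper's proof. The paper argues via the \emph{crossed lantern relation}
\[
(T_{y_1}T_{y_2}^{-1})(T_{x_1}T_{x_2}^{-1}) = (T_{z_1}T_{z_2}^{-1}),
\]
where $z_i = T_{x_2}(y_i)$, and iterates it $L$ times: conjugating $T_{y_1}T_{y_2}^{-1}$ by $T_{x_2}^L \in \Mod_{g,b}(L)$ and telescoping yields $L\,[T_{x_1}T_{x_2}^{-1}]_L = 0$ in the coinvariants. Your approach instead exhibits $\phi^L$ directly as a single commutator $[\beta, T_z^L]$ with $\beta \in \Torelli_{g,b}$ and $T_z^L \in \Mod_{g,b}(L)$, from which the vanishing in the coinvariants is immediate.

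The trade-off is in the external input. The paper's proof is self-contained once one has the crossed lantern relation (from \cite{PutmanInfinite}); it never leaves the level of explicit Dehn-twist identities. Your proof is shorter and more conceptual, but it imports the transitivity of $\Torelli_{g,b}$ on isotopy classes of nonseparating simple closed curves in a fixed homology class. That transitivity is indeed standard for $g \geq 3$ (it follows from change of coordinates together with the surjection of the $\Mod$-stabilizer of a nonseparating curve onto the $\Sp$-stabilizer of its homology class), so the citation is legitimate; just be sure to give a reference, since the statement is not proved elsewhere in this paper. Both approaches ultimately hinge on the same mechanism --- that $T_c^L$ lies in $\Mod_{g,b}(L)$ for a nonseparating curve $c$ --- but yours packages it more efficiently.
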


\begin{remark}
In \cite[proof of Theorem 1.1]{McCarthyLevel}, McCarthy proved that for $v \in \HH_1(\Mod_{g,b}(L);\Z)$, we have
$L \cdot v = 0$.  Though this is related to Lemma \ref{lemma:kill2torsion}, it seems difficult to use
McCarthy's techniques to prove Lemma \ref{lemma:kill2torsion}.
\end{remark}

\begin{proof}[{Proof of Lemma \ref{lemma:kill2torsion}}]
For $f \in \Torelli_{g,b}$, we will denote the associated element of $(\HH_1(\Torelli_{g,b};\Z))_{\Sp_{2g}(\Z,L)}$
by $[f]_L$.  Theorem \ref{theorem:torelligen} says that $\Torelli_{g,b}$ is generated by bounding pair maps, so it
is enough to show that $L [T_{x_1} T_{x_2}^{-1}]_L = 0$ for a bounding pair map $T_{x_1} T_{x_2}^{-1}$.
Embed $\{x_1,x_2\}$ in a 2-holed torus as in Figure \ref{figure:torelli}.b.  We will make use
of the {\em crossed lantern relation} from \cite{PutmanInfinite}.  Letting
$\{y_1,y_2\}$ and $\{z_1,z_2\}$ be the other bounding pair maps depicted in Figures
\ref{figure:torelli}.b--c, this relation says that
\begin{equation}
\label{eqn:crossedlantern}
(T_{y_1}T_{y_2}^{-1})(T_{x_1}T_{x_2}^{-1})=(T_{z_1}T_{z_2}^{-1}).
\end{equation}
Observe that for $i=1,2$ we have $z_i = T_{x_2}(y_i)$.  The key observation is that for all $n \geq 0$, conjugating \eqref{eqn:crossedlantern}
by $T_{x_2}^n$ results in another crossed lantern relation
$$(T_{T_{x_2}^n(y_1)} T_{T_{x_2}^n(y_2)}^{-1}) (T_{x_1}T_{x_2}^{-1}) = (T_{T_{x_2}^{n+1}(y_1)} T_{T_{x_2}^{n+1}(y_2)}^{-1}).$$
Since $T_{x_2}^L \in \Mod_{g,n}(L)$, we conclude that in $(\HH_1(\Torelli_{g,n};\Z)_{\Sp_{2g}(\Z,L)}$ we have
$[T_{y_1} T_{y_2}^{-1}]_L$ equal to
\begin{align*}
[T_{x_2}^L (T_{y_1} T_{y_2}^{-1}) T_{x_2}^{-L}]_L &= [(T_{T_{x_2}^L(y_1)} T_{T_{x_2}^L(y_2)}^{-1})]_L\\
                       &= [T_{x_1}T_{x_2}^{-1}]_L + [(T_{T_{x_2}^{L-1}(y_1)} T_{T_{x_2}^{L-1}(y_2)}^{-1})]_L\\
                       &= 2[T_{x_1}T_{x_2}^{-1}]_L + [(T_{T_{x_2}^{L-2}(y_1)} T_{T_{x_2}^{L-2}(y_2)}^{-1})]_L\\
                       &\hspace{5.5pt}\vdots\\
                       &= L[T_{x_1}T_{x_2}^{-1}]_L + [T_{y_1} T_{y_2}^{-1}]_L,
\end{align*}
so $L[T_{x_1}T_{x_2}^{-1}]_L = 0$, as desired.
\end{proof}

\begin{lemma}
\label{lemma:splinv}
For $g \geq 3$ and $L \geq 2$, define $H = \HH_1(\Sigma_{g};\Z)$ and $H_L = \HH_1(\Sigma_{g};\Z / L\Z)$.  Then
$$(\wedge^3 H)_{\Sp_{2g}(\Z,L)} \cong \wedge^3 H_L \quad \text{and} \quad ((\wedge^3 H)/H)_{\Sp_{2g}(\Z,L)} \cong (\wedge^3 H_L)/H_L.$$
\end{lemma}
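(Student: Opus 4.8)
The plan is to compute each set of coinvariants directly, using the fact that $\wedge^3 H$ and $H$ are built out of $H = \HH_1(\Sigma_g;\Z)$, and that the coinvariants functor $(-)_{\Sp_{2g}(\Z,L)}$ interacts well with the $\Z/L$ reduction. The first thing I would do is reduce the question to understanding $(\wedge^3 H)_{\Sp_{2g}(\Z,L)}$, since the quotient statement $((\wedge^3 H)/H)_{\Sp_{2g}(\Z,L)} \cong (\wedge^3 H_L)/H_L$ should follow by applying the right-exactness of coinvariants to the short exact sequence $0 \to H \to \wedge^3 H \to (\wedge^3 H)/H \to 0$, provided I can simultaneously identify $(\wedge^3 H)_{\Sp_{2g}(\Z,L)} \cong \wedge^3 H_L$ and $H_{\Sp_{2g}(\Z,L)} \cong H_L$ compatibly with the embedding $H \hookrightarrow \wedge^3 H$, $h \mapsto h \wedge \omega$.

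The core computation is therefore the claim that for a free $\Z$-module $H$ of rank $2g$ with its symplectic form, and $M$ equal to $H$, $\wedge^2 H$, or $\wedge^3 H$, the natural map $M \to M \otimes \Z/L = M_L$ induces an isomorphism $M_{\Sp_{2g}(\Z,L)} \cong (M_L)_{\Sp_{2g}(\Z,L)} \cong M_L$. The plan here has two parts. First, by definition the coinvariants kill the submodule spanned by $\{\gamma \cdot x - x\}$ for $\gamma \in \Sp_{2g}(\Z,L)$, so I would show that $L \cdot M \subseteq \langle \gamma x - x \rangle$: concretely, a transvection-type element $\gamma \in \Sp_{2g}(\Z,L)$ of the form $\One + L N$ acting on $H$ sends a basis vector to itself plus $L$ times another basis vector, so the differences $\gamma x - x$ already generate $L \cdot H$, and by functoriality of $\wedge^k$ they generate enough to contain $L \cdot \wedge^k H$. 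This gives a surjection $M_L \twoheadrightarrow M_{\Sp_{2g}(\Z,L)}$. Second, I would show the reverse: that $\Sp_{2g}(\Z,L)$ acts trivially on $M_L = M \otimes \Z/L$, which is immediate since every element of $\Sp_{2g}(\Z,L)$ reduces to the identity mod $L$, hence acts trivially on $H_L$ and therefore on $\wedge^k H_L$. Combining, the composite $M_L \to M_{\Sp_{2g}(\Z,L)} \to (M_L)_{\Sp_{2g}(\Z,L)} = M_L$ is the identity, forcing $M_{\Sp_{2g}(\Z,L)} \cong M_L$.

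The one genuinely subtle point — and the step I expect to be the main obstacle — is verifying that $L \cdot \wedge^3 H$ is actually contained in the submodule generated by $\{\gamma x - x : \gamma \in \Sp_{2g}(\Z,L)\}$, rather than merely $(L \cdot H) \wedge \wedge^2 H$, which a priori could be smaller than $L \cdot \wedge^3 H$. Here I would argue that for a pure wedge $e_i \wedge e_j \wedge e_k$, a well-chosen symplectic transvection $\gamma = \One + L N$ fixing two of the three basis vectors and translating the third by $L$ times a fourth vector produces $\gamma(e_i \wedge e_j \wedge e_k) - e_i \wedge e_j \wedge e_k = L \cdot (e_\ell \wedge e_j \wedge e_k)$, so that iterating over a basis of pure wedges yields all of $L \cdot \wedge^3 H$; the only care needed is checking that the relevant $N$ lies in $\SpLie_{2g}(\Z)$ so that $\One + LN \in \Sp_{2g}(\Z,L)$, which is a direct matrix computation. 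Once these inclusions are pinned down, the quotient statement for $(\wedge^3 H)/H$ follows formally from right-exactness together with the compatible identifications, completing the proof.
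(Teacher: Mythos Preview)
Your proposal is correct and follows essentially the same approach as the paper: both show that $L\cdot(x\wedge y\wedge z)$ vanishes in the coinvariants by exhibiting a level-$L$ symplectic transvection that moves an auxiliary basis vector by $L$ times one of $x,y,z$ while fixing the other two, and both use that $\Sp_{2g}(\Z,L)$ acts trivially on $\wedge^3 H_L$ to get the reverse inequality. The only cosmetic differences are that the paper organizes the transvection choice by first observing that one of $x,y,z$ has zero intersection with the other two (so its symplectic partner serves as your ``fourth vector''), and it treats $(\wedge^3 H)/H$ by the same direct argument rather than via right-exactness.
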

\begin{proof}
The natural maps $\wedge^3 H \rightarrow \wedge^3 H_L$ and $(\wedge^3 H)/H \rightarrow (\wedge^3 H_L)/H_L$ factor through
$(\wedge^3 H)_{\Sp_{2g}(\Z,L)}$ and $((\wedge^3 H)/H)_{\Sp_{2g}(\Z,L)}$ respectively, so it suffices to show
that in the indicated groups of coinvariants we have $L \cdot v = 0$ for all $v$.
Letting $S=\{a_1,b_1,\ldots,a_g,b_g\}$ be a symplectic basis for $H$, the groups $\wedge^3 H$ and
$(\wedge^3 H) / H$ are generated by $T:=\{\text{$x \wedge y \wedge z$ $|$ $x,y,z \in S$ distinct}\}$.  Consider $x \wedge y \wedge z \in T$.
It is enough to show that in the indicated groups of coinvariants we have $L(x \wedge y \wedge z)=0$.
Now, one of $x$, $y$, and $z$ must have algebraic intersection number $0$ with the other two terms.  Assume
that $x = a_1$ (the other cases are similar).  We thus have $y,z \in \{a_2,b_2,\ldots,a_g,b_g\}$.  There is then some
$\phi \in \Sp_{2g}(\Z,L)$ such that $\phi(b_1) = b_1 + L a_1 = b_1 + L x$ and such that $\phi(y)=y$ and $\phi(z)=z$.  We
conclude that in the indicated groups of coinvariants we have $b_1 \wedge y \wedge z = (b_1 + L x) \wedge y \wedge z$, so
$L(x \wedge y \wedge z) = 0$, as desired.
\end{proof}

\begin{proposition}
\label{proposition:torellicoinvariants}
Fix $g \geq 3$ and $L \geq 2$.  Set $H_L = \HH_1(\Sigma_{g,b};\Z/L)$.  If $L$ is odd, then
$$(\HH_1(\Torelli_{g,1};\Z))_{\Sp_{2g}(\Z,L)} \cong \wedge^3 H_L \quad \text{and} \quad (\HH_1(\Torelli_{g};\Z))_{\Sp_{2g}(\Z,L)} \cong (\wedge^3 H_L)/H_L,$$
while if $L$ is even, then
$$(\HH_1(\Torelli_{g,1};\Z))_{\Sp_{2g}(\Z,L)} \cong B_2(2g) \oplus \wedge^3 H_L \quad \text{and} \quad (\HH_1(\Torelli_{g};\Z))_{\Sp_{2g}(\Z,L)} \cong \overline{B}_2(2g) \oplus (\wedge^3 H_L)/H_L.$$
\end{proposition}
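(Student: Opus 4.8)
The plan is to apply the right-exactness of the coinvariants functor $(-)_{\Sp_{2g}(\Z,L)}$ to the short exact sequences of $\Sp_{2g}(\Z)$-modules supplied by Theorem \ref{theorem:johnsonabel1} and its closed-surface analogue. Writing $G' = \Sp_{2g}(\Z,L)$ and treating the one-boundary case first, right-exactness applied to
$$0 \longrightarrow B_2(2g) \longrightarrow \HH_1(\Torelli_{g,1};\Z) \xrightarrow{\tau} \wedge^3 H \longrightarrow 0$$
produces an exact sequence
$$(B_2(2g))_{G'} \xrightarrow{\alpha} (\HH_1(\Torelli_{g,1};\Z))_{G'} \xrightarrow{\beta} (\wedge^3 H)_{G'} \longrightarrow 0.$$
Lemma \ref{lemma:splinv} identifies the right-hand term with $\wedge^3 H_L$, so the whole problem reduces to understanding the image of $\alpha$, i.e.\ the surviving Birman--Craggs contribution. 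The closed case is set up identically, using Theorem's closed analogue and the second half of Lemma \ref{lemma:splinv} to identify the quotient with $(\wedge^3 H_L)/H_L$.

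For $L$ odd I would argue that $\alpha$ is the zero map. Since $B_2(2g)$ is killed by $2$, its image in the coinvariants is $2$-torsion; on the other hand Lemma \ref{lemma:kill2torsion} shows that every element of $(\HH_1(\Torelli_{g,1};\Z))_{G'}$ is killed by $L$. As $\gcd(2,L)=1$, the image of $\alpha$ vanishes, $\beta$ is an isomorphism, and the coinvariants are $\wedge^3 H_L$ (respectively $(\wedge^3 H_L)/H_L$) as claimed. Note this argument needs no information about the $G'$-action on $B_2(2g)$.

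For $L$ even the term $B_2(2g)$ genuinely survives, and the key tool will be the Birman--Craggs--Johnson homomorphism $\sigma \colon \HH_1(\Torelli_{g,1};\Z) \to B_3(2g)$. Because the $\Sp_{2g}(\Z)$-action on $B_3(2g)$ factors through $\Sp_{2g}(\Z/2)$ and $G' \subseteq \Sp_{2g}(\Z,2)$, the group $G'$ acts trivially on both $B_2(2g)$ and $B_3(2g)$; hence $(B_2(2g))_{G'} = B_2(2g)$ and $\sigma$ descends to a map $\bar\sigma \colon (\HH_1(\Torelli_{g,1};\Z))_{G'} \to B_3(2g)$. The left commuting square of diagram \eqref{eqn:johnsonhomobcghomodiagram} shows $\sigma$ restricts to the inclusion $B_2(2g) \hookrightarrow B_3(2g)$, so $\bar\sigma \circ \alpha$ is that inclusion and $\alpha$ is injective. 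Combined with the equality $\Image \alpha = \ker \beta$ from right-exactness, I obtain a short exact sequence
$$0 \longrightarrow B_2(2g) \longrightarrow (\HH_1(\Torelli_{g,1};\Z))_{G'} \longrightarrow \wedge^3 H_L \longrightarrow 0.$$
To split it I would use the degree filtration of the Boolean algebra $B(2g)$: as an abelian group $B_2(2g)$ is a direct summand of $B_3(2g)$ (project onto terms of degree $\le 2$), so composing this projection with $\bar\sigma$ gives a retraction of $\alpha$, and the sequence splits. The closed case runs verbatim with $\overline{B}_2(2g) \hookrightarrow \overline{B}_3(2g)$.

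The main obstacle will be this $L$-even splitting: both $B_2(2g)$ and $\wedge^3 H_L$ contain $2$-torsion, so the extension does not split for formal reasons, and one must genuinely produce the retraction from the equivariance of $\sigma$ together with the degree filtration of the Boolean algebra. The places where the hypotheses enter are precisely the verifications that $\sigma$ is $\Sp_{2g}(\Z)$-equivariant (so that it descends to $G'$-coinvariants) and that $G'$ acts trivially on $B_3(2g)$ when $L$ is even.
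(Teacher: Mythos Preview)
Your proposal is correct and follows essentially the same approach as the paper's proof. Both arguments use Lemma~\ref{lemma:kill2torsion} together with $\gcd(2,L)=1$ to kill the Birman--Craggs piece when $L$ is odd, and for $L$ even both use the fact that $\sigma$ lands in $B_3(2g)$ on which $G'=\Sp_{2g}(\Z,L)$ acts trivially to show that $B_2(2g)$ injects into the coinvariants; your treatment of the splitting via an explicit abelian-group retraction $B_3(2g)\to B_2(2g)$ is a welcome elaboration of a step the paper leaves implicit.
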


\begin{remark}
When $L$ is even, the decompositions of $(\HH_1(\Torelli_{g,1};\Z))_{\Sp_{2g}(\Z,L)}$ and $(\HH_1(\Torelli_{g};\Z))_{\Sp_{2g}(\Z,L)}$ 
in Proposition \ref{proposition:torellicoinvariants} are not natural with respect to the $\Sp_{2g}(\Z/L)$-action.
\end{remark}

\begin{proof}[Proof of Proposition \ref{proposition:torellicoinvariants}]
We will do the case of $\Torelli_{g,1}$; the case of $\Torelli_g$ is similar.  Let $H = \HH_1(\Sigma_g;\Z)$, so
$\HH_1(\Torelli_{g,1};\Z) \cong B_2(2g) \oplus \wedge^3 H$.  If $L$ is
odd, then by Lemma \ref{lemma:kill2torsion} the $2$-torsion $B_2(2g)$ goes to $0$ upon passing to the group of coinvariants.
Thus by Lemma \ref{lemma:splinv} we have
$$(\HH_1(\Torelli_{g,1};\Z))_{\Sp_{2g}(\Z,L)} \cong (\wedge^3 H)_{\Sp_{2g}(\Z)} \cong \wedge^3 H_L.$$
Now assume that $L$ is even.  Set 
$$K = \{\text{$x - g(x)$ $|$ $x \in \HH_1(\Torelli_{g,1};\Z)$, $g \in \Sp_{2g}(\Z,L)$}\},$$
so $(\HH_1(\Torelli_{g,1};\Z))_{\Sp_{2g}(\Z,L)} = \HH_1(\Torelli_{g,1};\Z) / K$.  Since $\Sp_{2g}(\Z,L)$ acts
trivially on $\HH_1(\Sigma_{g,1};\Z/2)$, the image of $K$ in $B_3(2g)$ under the Birman-Craggs-Johnson
homomorphism is $0$.  In particular, examining commutative diagram \eqref{eqn:johnsonhomobcghomodiagram} from
Theorem \ref{theorem:johnsonabel1}, we see
that $K \cap B_2(2g) = 0$.  Using Lemma \ref{lemma:splinv}, we thus conclude that
\begin{equation*}
(\HH_1(\Torelli_{g,1};\Z))_{\Sp_{2g}(\Z,L)} \cong B_2(2g) \oplus (\wedge^3 H)_{\Sp_{2g}(\Z)} \cong B_2(2g) \oplus \wedge^3 H_L. \qedhere
\end{equation*}
\end{proof}

\subsection{Extending abelian quotients of $\Torelli_{g,b}$ over $\Mod_{g,b}(L)$}
\label{section:extendingtorelli}

The last $3$ terms of the $5$ term exact sequence associated to the exact sequence
$$1 \longrightarrow \Torelli_{g,b} \longrightarrow \Mod_{g,b}(L) \longrightarrow \Sp_{2g}(\Z,L) \longrightarrow 1$$
are
\begin{equation}
\label{eqn:coinvexseq}
(\HH_1(\Torelli_{g,b};\Z))_{\Sp_{2g}(\Z,L)} \stackrel{j}{\longrightarrow} \HH_1(\Mod_{g,b}(L);\Z) \longrightarrow \HH_1(\Sp_{2g}(\Z,L)
\longrightarrow 0.
\end{equation}
In this section, we discuss two known results about the image of $j$.

As we proved in Proposition \ref{proposition:torellicoinvariants}, the group $(\HH_1(\Torelli_{g,b};\Z))_{\Sp_{2g}(\Z,L)}$
has two pieces, one coming from the Johnson homomorphism and the other from the Birman-Craggs-Johnson homomorphism (the
latter only existing if $L$ is even).  We begin by discussing the Johnson homomorphism $\tau$.  Analogues of the
Johnson homomorphism
``mod $L$'' were constructed independently by Broaddus-Farb-Putman, Perron, and Sato.  The following theorem
gives their main properties.
\begin{theorem}[{Broaddus-Farb-Putman, \cite[Example 5.3 and Theorem 5.8]{BroaddusFarbPutman}, Perron \cite{Perron},
Sato \cite{Sato}}]
\label{theorem:reljohnson}
Fix $g \geq 3$ and $L \geq 2$.  Set $H = \HH_1(\Sigma_{g};\Z)$ and
$H_L = \HH_1(\Sigma_{g};\Z/L)$.  There then exist homomorphisms
$\Mod_{g,1}(L) \rightarrow \wedge^3 H_L$ and $\Mod_g(L) \rightarrow (\wedge^3 H_L)/H_L$ that fit
into commutative diagrams of the form
$$\mbox{$
\begin{CD}
\Torelli_{g,1} @>{\tau}>> \wedge^3 H\\
@VVV                       @VVV \\
\Mod_{g,1}(L)  @>>>        \wedge^3 H_L
\end{CD}$}
\quad\quad \text{and} \quad\quad
\mbox{$
\begin{CD}
\Torelli_{g} @>{\tau}>> (\wedge^3 H)/H\\
@VVV                       @VVV \\
\Mod_{g}(L)  @>>>       (\wedge^3 H_L)/H_L
\end{CD}$}$$
Here the right hand vertical arrows are reduction mod $L$.
\end{theorem}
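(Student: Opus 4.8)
The plan is to construct the extended homomorphism directly from the action of $\Mod_{g,1}(L)$ on a mod-$L$ truncation of the second nilpotent quotient of $\pi = \pi_1(\Sigma_{g,1})$, which is free of rank $2g$. Writing $\Gamma_1 \supseteq \Gamma_2 \supseteq \Gamma_3$ for the lower central series, we have $\pi/\Gamma_2 \cong H$ and $\Gamma_2/\Gamma_3 \cong \wedge^2 H$, so $N := \pi/\Gamma_3$ is a $2$-step nilpotent group sitting in a central extension of $H$ by $\wedge^2 H$ on which $\Mod_{g,1}$ acts. First I would form the finite quotient $N_L := N/\langle\langle w^L \rangle\rangle$ by the normal closure of all $L$-th powers; a short computation using $[x^L,y] \equiv L(\bar x \wedge \bar y) \bmod \Gamma_3$ shows that $N_L$ is a central extension $1 \to \wedge^2 H_L \to N_L \to H_L \to 1$, and that the subgroups defining it are characteristic, so $\Mod_{g,1}$ acts on $N_L$.

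Next, fix $\phi \in \Mod_{g,1}(L)$. Since $\phi$ acts trivially on $H_L$ it acts trivially on the abelianization and on the center $\wedge^2 H_L$ of $N_L$, so $x \mapsto \phi(x) x^{-1}$ takes values in the center. The triviality of $\phi$ on the center makes this assignment additive and independent of the chosen lift, so it descends to a homomorphism $\tau_L(\phi) : H_L \to \wedge^2 H_L$, and the same centrality shows $\phi \mapsto \tau_L(\phi)$ is a homomorphism $\Mod_{g,1}(L) \to \Hom(H_L, \wedge^2 H_L)$. Using the (unimodular, hence perfect over $\Z/L$) symplectic pairing to identify $H_L^\ast \cong H_L$, the target becomes $H_L \otimes \wedge^2 H_L$. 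The crux — and the step I expect to be the main obstacle — is to show that $\tau_L(\phi)$ actually lands in the submodule $\wedge^3 H_L$. This is where the boundary is used: $\phi$ fixes $\partial \Sigma_{g,1}$, hence fixes the peripheral element $\prod_{i=1}^g [a_i,b_i]$, whose class in $\wedge^2 H$ is the symplectic form $\omega$. Tracking this relation mod $L$ forces the symplectic contraction $H_L \otimes \wedge^2 H_L \to H_L$ to annihilate $\tau_L(\phi)$; since $\wedge^3 H_L$ is exactly the kernel of this contraction, we conclude $\tau_L(\phi) \in \wedge^3 H_L$. This is the mod-$L$ reduction of Johnson's original computation identifying the image of $\tau$ with $\wedge^3 H$.

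Finally I would verify the two commutative diagrams. The restriction of $\tau_L$ to $\Torelli_{g,1}$ is, by construction, the composite of Johnson's $\tau$ with the reduction $\wedge^3 H \to \wedge^3 H_L$, since $N_L$ is the mod-$L$ reduction of $N$; this gives the left square. For the closed case I would pass to $\Sigma_g$ via the capping map $\Mod_{g,1}(L) \to \Mod_g(L)$, whose kernel is generated by the point-pushing subgroup and the boundary twist $T_\partial$, all of which lie in $\Torelli_{g,1}$. Johnson's computation shows that point-pushing along $\gamma$ has image $[\gamma] \wedge \omega$ and that $T_\partial$ has image contained in the same copy $H \wedge \omega$; reducing mod $L$, the composite $\Mod_{g,1}(L) \to \wedge^3 H_L \to (\wedge^3 H_L)/H_L$ therefore kills this kernel and descends to the desired map $\Mod_g(L) \to (\wedge^3 H_L)/H_L$, with the commuting diagram following from the bordered case. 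The remaining verifications — characteristicity of the subgroups defining $N_L$, additivity, and the contraction computation — are routine once this framework is in place.
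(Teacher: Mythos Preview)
The paper does not give its own proof of this theorem; it is quoted from \cite{BroaddusFarbPutman}, \cite{Perron}, and \cite{Sato}. Your overall strategy---building the mod-$L$ Johnson homomorphism from the action of $\Mod_{g,1}(L)$ on a nilpotent quotient of $\pi_1(\Sigma_{g,1})$---is indeed the approach taken in those references, and your construction of the homomorphism $\tau_L : \Mod_{g,1}(L) \to \Hom(H_L,\wedge^2 H_L)$ via $N_L$ is correct.

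The genuine gap is in the step you yourself flag as the crux: showing that the image lies in $\wedge^3 H_L$. Two things go wrong. First, at the level of your $N_L$ (the mod-$L$ reduction of $\pi/\Gamma_3$), the peripheral element $\zeta = \prod_i [a_i,b_i]$ already lies in the center $\wedge^2 H_L$, and any $\phi$ acting trivially on $H_L$ automatically acts trivially on that center; so the condition $\phi(\zeta)=\zeta$ is vacuous in $N_L$ and cannot constrain $\tau_L(\phi)$. To extract information from $\phi(\zeta)=\zeta$ you must work one step deeper, in a mod-$L$ version of $\pi/\Gamma_4$, where the computation $[\phi(a_i),\phi(b_i)] \equiv [a_i,b_i]\cdot[a_i,\beta_i]\cdot[\alpha_i,b_i]$ produces a nontrivial relation in $\Gamma_3/\Gamma_4 \otimes \Z/L \cong \mathcal{L}_3(H_L)$, the degree-$3$ part of the free Lie algebra on $H_L$. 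Second, the map this relation forces to vanish is not the symplectic contraction $H_L \otimes \wedge^2 H_L \to H_L$ but the Lie bracket $H_L \otimes \wedge^2 H_L \to \mathcal{L}_3(H_L)$; it is the kernel of \emph{this} bracket that equals $\wedge^3 H_L$ (this is exactly the Jacobi identity). The kernel of the symplectic contraction is a quite different submodule that does not even contain $\wedge^3 H_L$: for instance $a_1 \wedge b_1 \wedge a_2$, viewed in $H\otimes\wedge^2 H$, contracts to $2a_2 \neq 0$. Once you pass to the $3$-step nilpotent quotient and replace ``symplectic contraction'' by ``Lie bracket,'' the argument goes through, and your reduction to the closed-surface case is then fine as written.
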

\noindent
An immediate consequence of this theorem is that the terms of $(\HH_1(\Torelli_{g,b};\Z))_{\Sp_{2g}(\Z,L)}$
given by the quotients of the Johnson homomorphisms inject into $\HH_1(\Mod_{g,b}(L);\Z)$.

We now turn to the Birman-Craggs-Johnson homomorphism $\sigma$.  A version of this was constructed
on $\Mod_{g,b}(2)$ by Sato.  However, he did not manage to get the entire image of $\sigma$.  Observe
that $B_3(2g)$ and $\overline{B}_3(2g)$ contain the ``constant'' subgroups $B_0(2g)$ and $\overline{B}_0(2g)$,
both of which are isomorphic to $\Z/2$.  Define
$$\sigma' : \Torelli_{g,1} \longrightarrow B_3(2g) / B_0(2g) \quad \text{and} \quad
\sigma' : \Torelli_g \longrightarrow \overline{B}_3(2g) / \overline{B}_0(2g)$$
to be the compositions of the Birman-Craggs-Johnson homomorphisms with the quotients by $B_0(2g)$ and
$\overline{B}_0(2g)$.  Sato proved the following theorem.
\begin{theorem}[{Sato \cite{Sato}}]
\label{theorem:relbcj}
Fix $g \geq 3$.  There then exist abelian groups $A$ and $\overline{A}$ together
with homomorphisms $\Mod_{g,1}(2) \rightarrow A$
and $\Mod_g \rightarrow \overline{A}$ and injections
$i : B_3(2g) / B_0(2g) \hookrightarrow A$ and 
$\overline{i} : \overline{B}_3(2g) / \overline{B}_0(2g) \hookrightarrow \overline{A}$ that fit into
commutative diagrams of the form
$$\mbox{$
\begin{CD}
\Torelli_{g,1} @>{\sigma'}>> B_3(2g) / B_0(2g)\\
@VVV                        @VV{i}V \\
\Mod_{g,1}(2)  @>>>         A
\end{CD}$}
\quad\quad \text{and} \quad\quad
\mbox{$
\begin{CD}
\Torelli_{g} @>{\sigma'}>> \overline{B}_3(2g) / \overline{B}_0(2g)\\
@VVV                        @VV{\overline{i}}V \\
\Mod_{g}(2)  @>>>          \overline{A}
\end{CD}$}$$
\end{theorem}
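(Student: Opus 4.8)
The plan is to assemble the extension out of the individual Birman--Craggs invariants from which $\sigma$ is built, using crucially that $\Mod_{g,1}(2)$ acts trivially on the set $\Omega$ of quadratic forms. Recall that $\sigma$ is recovered from the injection $B_3(2g) \hookrightarrow \Map(\Omega,\Z/2)$ by evaluation: to each $q \in \Omega$ one associates a genus $g$ Heegaard embedding of the closed surface into $S^3$, and the resulting Birman--Craggs homomorphism $\rho_q \colon \Torelli_{g,1} \to \Z/2$ sends $\phi$ to the Rochlin invariant of the integral homology sphere $M_\phi$ obtained by regluing the two handlebodies along $\phi$, so that $\sigma(\phi) = (q \mapsto \rho_q(\phi))$. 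The point of departure is that if $\phi \in \Mod_{g,1}(2)$, then $\phi$ fixes every $q \in \Omega$, and moreover $M_\phi$ is still a $\Z/2$-homology sphere: the two Lagrangian summands $L_1,L_2 \subset \HH_1(\Sigma_g;\Z/2)$ that span for $S^3$ continue to span, since $\phi(L_2)=L_2$.

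First I would extend each $\rho_q$ to a function $\tilde\rho_q \colon \Mod_{g,1}(2) \to \Z/2$ by setting $\tilde\rho_q(\phi)$ equal to the Rochlin invariant of the $\Z/2$-homology sphere $M_\phi$, which carries a unique spin structure and hence a well-defined invariant; by construction $\tilde\rho_q$ restricts to $\rho_q$ on $\Torelli_{g,1}$. I would then define the desired map by $\phi \mapsto [\,q \mapsto \tilde\rho_q(\phi)\,]$ into $A := \Map(\Omega,\Z/2)/(\text{constant functions})$.

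The heart of the argument is showing that this assignment is a homomorphism. The obstruction is the defect $c(\phi,\psi) := \tilde\rho_q(\phi\psi) - \tilde\rho_q(\phi) - \tilde\rho_q(\psi)$, which I would compute from the behavior of the Rochlin invariant under composition of regluings. On $\Torelli_{g,1}$ this defect vanishes, since the $\rho_q$ are genuine homomorphisms, but on $\Mod_{g,1}(2)$ it need not; the essential claim is that $c(\phi,\psi)$ is \emph{independent of $q$}. Granting this, $[\,\tilde\rho_\bullet\,]$ is a homomorphism modulo constants, which is exactly why one must pass to the quotient by $B_0(2g) \cong \Z/2$. Taking $i \colon B_3(2g)/B_0(2g) \hookrightarrow A$ to be the map induced by evaluation---injective because the preimage of the constants under $B_3(2g) \hookrightarrow \Map(\Omega,\Z/2)$ is precisely $B_0(2g)$---commutativity of the required square is immediate from $\tilde\rho_q|_{\Torelli_{g,1}} = \rho_q$. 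The closed case follows verbatim upon restricting all functions from $\Omega$ to the Arf-zero forms $\Omega_0$, with $\overline{A} := \Map(\Omega_0,\Z/2)/(\text{constant functions})$ and $\overline{B}_3(2g)$ in place of $B_3(2g)$.

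The step I expect to be the main obstacle is exactly the constancy of $c(\phi,\psi)$ in $q$. This rests on a gluing formula for the Rochlin invariant under composition of two regluings and on tracking how the resulting correction term depends on the chosen quadratic form; the triviality of the $\Mod_{g,1}(2)$-action on $\Omega$ is what forces the $q$-dependent contributions of the separate terms to cancel, leaving only a $q$-independent anomaly. That this anomaly is genuinely nonzero is consistent with the remark following Theorem \ref{theorem:modlabel1}, where the full $B_0(2g)$ summand is seen to die in $\HH_1(\Mod_{g,1}(L);\Z)$.
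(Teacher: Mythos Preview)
The paper does not prove this theorem: it is stated as a result of Sato and simply cited to \cite{Sato}, with no argument given in the present paper. So there is no proof here to compare your proposal against.

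That said, your outline is essentially the approach Sato takes. The key input is exactly the one you identify: for $\phi \in \Mod_{g,b}(2)$ the reglued manifold $M_\phi$ is a $\Z/2$-homology sphere, hence carries a unique spin structure and a Rochlin invariant, so each Birman--Craggs map $\rho_q$ extends set-theoretically to $\tilde\rho_q$ on $\Mod_{g,b}(2)$. The substantive content, as you correctly flag, is that the defect $c(\phi,\psi)=\tilde\rho_q(\phi\psi)-\tilde\rho_q(\phi)-\tilde\rho_q(\psi)$ is independent of $q$; in Sato's treatment this comes from an additivity formula for the Rochlin invariant under gluing, in which the $q$-dependent terms are controlled by the action on $\HH_1(\Sigma_g;\Z/2)$ and hence vanish for elements of $\Mod_{g,b}(2)$. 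Your proposal leaves this step at the level of a plausibility argument rather than a proof, but that is the only real gap, and it is the same gap anyone would have to fill by consulting \cite{Sato} directly.
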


\begin{remark}
Sato also proves in \cite{Sato} that there does {\em not} exist an abelian group $B$ together with a homomorphism
$\Mod_{g,1}(2) \rightarrow B$ and an injection $j:B_3(2g) \hookrightarrow B$ such that the diagram
$$\begin{CD}
\Torelli_{g,1} @>{\sigma}>> B_3(2g) \\
@VVV                        @VV{j}V \\
\Mod_{g,1}(2)  @>>>         B
\end{CD}$$
commutes, and similarly for $\Mod_{g}(2)$.  For $L > 2$, Theorem \ref{theorem:modlabel} below
implies that a similar statement is true for $\Mod_{g,1}(L)$ and $\Mod_{g}(L)$.
\end{remark}

For $L$ even, we can restrict the homomorphisms given by Theorem \ref{theorem:relbcj} to $\Mod_{g,b}(L)$ and
obtain that the only possible subgroups of the pieces of $(\HH_1(\Torelli_{g,b};\Z))_{\Sp_{2g}(\Z,L)}$
given by the Birman-Craggs-Johnson homomorphisms that can go to $0$ in $\HH_1(\Mod_{g,b}(L);\Z)$ are
$B_0(2g)$ and $\overline{B}_0(2g)$.

Summing up, we have obtained the following lemma.
\begin{lemma}
\label{lemma:extendtorelli}
Fix $g \geq 3$ and $L \geq 2$ and $0 \leq b \leq 1$.  If $L$ is odd, then the map
$$(\HH_1(\Torelli_{g,b};\Z))_{\Sp_{2g}(\Z,L)} \rightarrow \HH_1(\Mod_{g,b}(L);\Z)$$         
from \eqref{eqn:coinvexseq} is injective.  If $L$ is even, then the kernel of this
map is isomorphic to either $0$ or $\Z/2$.  If the kernel is $\Z/2$, then it is
equal to $B_0(2g)$ if $b = 1$ and $\overline{B}_0(2g)$ if $b = 0$.
\end{lemma}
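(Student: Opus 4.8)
The plan is to assemble the lemma from the three preceding results: the computation of the coinvariants (Proposition \ref{proposition:torellicoinvariants}), the extension of the Johnson homomorphism (Theorem \ref{theorem:reljohnson}), and Sato's extension of the Birman--Craggs--Johnson homomorphism (Theorem \ref{theorem:relbcj}). The map $j$ in \eqref{eqn:coinvexseq} is induced by the inclusion $\Torelli_{g,b} \hookrightarrow \Mod_{g,b}(L)$, so detecting nonzero elements of its image amounts to producing abelian quotients of $\Mod_{g,b}(L)$ on which the relevant classes survive.

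First I would record, via Proposition \ref{proposition:torellicoinvariants}, that for $L$ odd the domain $(\HH_1(\Torelli_{g,b};\Z))_{\Sp_{2g}(\Z,L)}$ is exactly the Johnson piece ($\wedge^3 H_L$ when $b=1$, and $(\wedge^3 H_L)/H_L$ when $b=0$), while for $L$ even there is in addition a Birman--Craggs--Johnson summand ($B_2(2g)$ when $b=1$, and $\overline{B}_2(2g)$ when $b=0$). For the Johnson piece, the commutative diagrams of Theorem \ref{theorem:reljohnson} produce a homomorphism $\Mod_{g,b}(L) \rightarrow \wedge^3 H_L$ (resp.\ $(\wedge^3 H_L)/H_L$) whose restriction to $\Torelli_{g,b}$ is the reduction of $\tau$. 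Since that reduction realizes the isomorphism $(\wedge^3 H)_{\Sp_{2g}(\Z,L)} \cong \wedge^3 H_L$ of Lemma \ref{lemma:splinv}, the composite of $j$ with the mod $L$ Johnson homomorphism is an isomorphism on the Johnson piece, so the Johnson piece injects under $j$. In the odd case this already yields injectivity of $j$, which is the first claim.

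For $L$ even I would then treat the Birman--Craggs--Johnson summand. Because $L$ is even we have $\Mod_{g,b}(L) \subseteq \Mod_{g,b}(2)$, so restricting the homomorphisms of Theorem \ref{theorem:relbcj} gives an extension of $\sigma'$ over $\Mod_{g,b}(L)$; the injections $i$ (resp.\ $\overline{i}$) guarantee that everything in $B_2(2g)/B_0(2g)$ (resp.\ $\overline{B}_2(2g)/\overline{B}_0(2g)$) survives in $\HH_1(\Mod_{g,b}(L);\Z)$. Combining this with the Johnson detection, any class in $\ker j$ must die under both the mod $L$ Johnson homomorphism and under $\sigma'$: the former forces it into the Birman--Craggs--Johnson summand, and the latter forces it into the kernel of $B_2(2g) \rightarrow B_2(2g)/B_0(2g)$, namely $B_0(2g) \cong \Z/2$ (resp.\ $\overline{B}_0(2g) \cong \Z/2$). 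Hence $\ker j$ is a subgroup of $\Z/2$, so is either $0$ or $\Z/2$, and if nontrivial equals $B_0(2g)$ (resp.\ $\overline{B}_0(2g)$).

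The only point requiring care --- and the one I expect to be the main obstacle --- is arguing that the two detection homomorphisms jointly separate the summands of the coinvariant group, so that no kernel class can mix a nonzero Johnson part with a Birman--Craggs--Johnson part. This is handled by the exactness of the Johnson detection on its own summand: since the mod $L$ Johnson homomorphism recovers $\wedge^3 H_L$ faithfully, a kernel class necessarily has vanishing Johnson component, after which it lies entirely in the Birman--Craggs--Johnson summand and the preceding analysis applies. Everything else is a direct citation of the already-established results, which is why the lemma is genuinely a summary of the section.
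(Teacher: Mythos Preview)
Your proposal is correct and follows essentially the same approach as the paper, which presents this lemma as a direct summary (``Summing up, we have obtained the following lemma'') of the preceding discussion of Theorems \ref{theorem:reljohnson} and \ref{theorem:relbcj}. Your write-up is in fact more explicit than the paper's, particularly in the final paragraph where you address the potential mixing of summands; this is exactly the right way to make the argument airtight.
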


\begin{remark}
We will eventually prove that when $L$ is even, the kernel in Lemma \ref{lemma:extendtorelli} is $\Z/2$.
\end{remark}

\section{The moduli space of curves}
\label{section:modulispacecurves}

\subsection{Definitions}
\label{section:modulidef}

We now introduce the moduli space of curves $\Moduli_g$.  
For more information, see \cite{HainModuliTran, HarrisMorrison}.

Let $\Teich_g$ be {\em Teichm\"{u}ller space}, i.e.\ the space of pairs $(S,B)$ where $S$ is a genus $g$ Riemann
surface and $B$ is a standard basis for $\pi_1(S)$ (up to conjugacy).  The space $\Teich_g$ is a contractible
complex manifold and the group $\Mod_g$ acts on it holomorphically and properly discontinuously in the following way.  
If $\phi \in \Mod_g$ and $(S,B) \in \Teich_g$, then $\phi(B)$ is well-defined up to conjugacy and $\phi(S,B) = (S,\phi(B))$.

As a complex analytic space, $\Moduli_g = \Teich_g / \Mod_g$; however, for us it is important
to take into account its structure as an orbifold.
To simplify our statements, we will denote $\Mod_{g}$ and $\Moduli_g$ by $\Mod_{g}(1)$ and $\Moduli_g(1)$, respectively.
For $L \geq 1$, the space $\Moduli_g(L)$ is the orbifold
$(\Teich_g,\Mod_g(L))$.  Thus $\Moduli_g(L')$ is a covering orbifold for $\Moduli_g(L)$ whenever $L|L'$.  For
$L \geq 3$, the group $\Mod_{g}(L)$ acts freely on $\Teich_g$, so in these cases $\Moduli_g(L)$ is a
trivial orbifold.  Ignoring the orbifold structure for a moment, it is known that the complex analytic space
$\Teich_g / \Mod_g(L)$ is a quasiprojective variety for all $L$ (this was originally
proven by Baily \cite{BailySchottky}; see also \cite{DeligneMumford}).  We conclude that 
$\Moduli_g(L)$ is a quasiprojective orbifold with quasiprojective finite cover 
$\Moduli_g(L')$ for any $L' \geq 3$ such that $L|L'$.
The representation $\Mod_g \rightarrow \Sp_{2g}(\Z)$ is induced by the orbifold map $\Moduli_g \rightarrow \PPAV_g$
that takes a Riemann surface to its Jacobian.

\subsection{Some known results}
\label{section:moduliknown}

Just like in the case of $\Pic(\PPAV_g(L)$, our main tool for investigating $\Pic(\Moduli_g(L))$ will
be Lemma \ref{lemma:maintool}.  To verify the conditions of that lemma, we will need a number of results, 
the first of which are the following group-cohomology calculations.

\begin{theorem}[{Powell, \cite{PowellTorelli}}]
\label{theorem:h1mod}
For $g \geq 3$, we have $\HH_1(\Mod_g;\Z) = 0$.
\end{theorem}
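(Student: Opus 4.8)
The plan is to recognize that $\HH_1(\Mod_g;\Z)$ is nothing but the abelianization of $\Mod_g$, and to prove it vanishes by first showing the abelianization is cyclic and then killing its generator with a single relation. First I would invoke the classical theorem of Dehn and Lickorish that $\Mod_g$ is generated by the Dehn twists $T_x$ about nonseparating simple closed curves $x$. By the change-of-coordinates principle, $\Mod_g$ acts transitively on the set of isotopy classes of nonseparating simple closed curves, so any two such twists are conjugate in $\Mod_g$. Their images in the abelianization therefore coincide; denote this common image by $t$. Since these twists generate $\Mod_g$, the group $\HH_1(\Mod_g;\Z)$ is the cyclic group $\Span{t}$, and it remains only to show $t = 0$.

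To kill $t$, I would use the lantern relation. On a sphere with four boundary components, with boundary curves $\partial_0,\partial_1,\partial_2,\partial_3$ and three appropriately chosen interior simple closed curves $x_1,x_2,x_3$, one has
$$T_{\partial_0} T_{\partial_1} T_{\partial_2} T_{\partial_3} = T_{x_1} T_{x_2} T_{x_3}.$$
For $g \geq 3$, this four-holed sphere embeds in $\Sigma_g$ so that all seven of the curves $\partial_0,\partial_1,\partial_2,\partial_3,x_1,x_2,x_3$ are nonseparating in $\Sigma_g$. Once such an embedding is produced, every one of the seven twists has image $t$ in the abelianization, so the lantern relation becomes the additive identity $4t = 3t$, that is, $t = 0$. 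Combined with the previous paragraph this yields $\HH_1(\Mod_g;\Z) = \Span{t} = 0$.

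I expect the geometric input — embedding the lantern so that all four boundary curves and all three interior curves are simultaneously nonseparating — to be the main obstacle, and it is precisely here that the hypothesis $g \geq 3$ is used: one must check that the complementary subsurfaces carry enough genus so that no curve is forced to separate $\Sigma_g$ or become nullhomotopic. Everything else (generation by twists, transitivity of the action on nonseparating curves, and the passage to the abelianization) is formal once this embedding is in hand. I would also note in passing that the argument genuinely requires $g \geq 3$, as the abelianization is nontrivial in the low-genus cases, so the embedding step cannot be circumvented.
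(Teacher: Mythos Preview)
Your argument is correct and is exactly the lantern-relation proof due to Harer. The paper does not give its own proof of this statement: it is cited as Powell's theorem, and the remark following Theorem~\ref{theorem:h2mod} points to the survey \cite{KorkmazSurvey} for ``a very short proof (due to Harer)'' --- which is precisely the argument you have written down. So your proposal matches the proof the paper references, though it is more detailed than anything appearing in the paper itself.
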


\begin{theorem}[{Harer, \cite{HarerSecond}}]
\label{theorem:h2mod}
For $g \geq 4$, we have $\HH_2(\Mod_g;\Z) \cong \Z$.
\end{theorem}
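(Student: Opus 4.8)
The statement is Harer's theorem, and the plan is to reproduce the topological strategy by which it is proved. Since the result concerns the homology of the mapping class group in a fixed low degree, the first move is to pass to the stable range. I would invoke Harer's homological stability theorem for the groups $\Mod_{g,b}$: working with surfaces carrying boundary, where one has stabilization maps given by gluing on a handle, one shows that in homological degree $2$ the maps $\HH_2(\Mod_{g,b};\Z) \to \HH_2(\Mod_{g+1,b};\Z)$ are isomorphisms once $g$ is large. This reduces the computation to a single stable value and, via the Birman exact sequences relating $\Mod_{g,1}$ to $\Mod_{g}$ (capping the boundary component, which introduces only a central $\Z$ generated by the boundary twist, followed by forgetting the resulting puncture), lets one transfer the answer between the bounded and closed surfaces.

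For the stable computation itself I would use the action of $\Mod_{g,b}$ on a highly connected complex, namely the arc complex whose simplices are isotopy classes of systems of disjoint, non-isotopic arcs based at the boundary. The essential input is a connectivity estimate: this complex (or the subcomplex of non-filling systems) is highly connected in a range growing with $g$. Because the action has finitely many orbits of simplices in each dimension, and the stabilizer of a simplex is, up to finite permutation factors, the mapping class group of the lower-complexity surface obtained by cutting along the arcs, one obtains an isotropy spectral sequence $E^1_{p,q} = \bigoplus_{\sigma} \HH_q(\mathrm{Stab}(\sigma);\Z) \Rightarrow \HH_{p+q}(\Mod_{g,b};\Z)$. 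In total degree at most $2$ only $\HH_0$, $\HH_1$, and $\HH_2$ of the simpler mapping class groups appear; these are known by induction on complexity together with $\HH_0 = \Z$ and $\HH_1 = 0$ (Theorem \ref{theorem:h1mod}). Carefully identifying the orbits of low-dimensional simplices and computing the $d_1$-differentials (the simplicial boundary maps on the quotient $\mathcal{A}/\Mod_{g,b}$) then yields that $\HH_2(\Mod_g;\Z)$ is cyclic.

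It remains to see that this cyclic group is infinite and torsion-free. For the lower bound I would exhibit a class of infinite order independently of any Picard-group input (which would be circular, since the computation of $\Pic(\Moduli_g)$ itself rests on Harer's theorem). The first Mumford--Morita--Miller class $\kappa_1 \in \HH^2(\Mod_g;\Q)$ is nonzero: this follows from the existence of surface bundles over surfaces with nonzero signature (the Atiyah--Kodaira examples, equivalently Meyer's signature cocycle), whose total spaces detect $\kappa_1$. Hence $\HH_2(\Mod_g;\Z)$ has rank at least $1$. Finally, since $\HH_1(\Mod_g;\Z) = 0$ by Theorem \ref{theorem:h1mod}, the universal coefficients theorem gives $\HH^2(\Mod_g;\Z) \cong \Hom(\HH_2(\Mod_g;\Z),\Z)$; a cyclic abelian group of positive rank is infinite cyclic, so $\HH_2(\Mod_g;\Z) \cong \Z$.

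The main obstacle is the middle step: the connectivity estimate for the arc complex and the attendant bookkeeping of simplex stabilizers and $d_1$-differentials. Showing that the upper bound is exactly cyclic---with no spurious torsion surviving to the $E^\infty$ page---is delicate, as it requires the stabilizer contributions and boundary combinatorics to match up precisely. A cleaner alternative that reaches the sharp range $g \geq 4$ is to work instead from an explicit finite presentation of $\Mod_{g,1}$ (for instance Gervais's presentation) and compute $\HH_2$ directly via the Hopf formula $\HH_2(G;\Z) \cong (R \cap [F,F])/[F,R]$; there the difficulty migrates to verifying that the relation module is generated modulo $[F,R]$ by a single element, but the torsion-freeness and the precise genus bound come out transparently.
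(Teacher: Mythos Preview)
The paper does not prove this theorem; it is simply quoted from the literature with a citation to Harer \cite{HarerSecond} and a remark noting that the case $g=4$ is essentially due to Pitsch \cite{PitschH2}. There is no ``paper's own proof'' to compare your proposal against.

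That said, your sketch is a faithful outline of how the result is actually established in the sources the paper cites. Harer's original argument does proceed via the action on a highly connected arc complex and an isotropy spectral sequence, together with stability; the alternative you mention via the Hopf formula applied to an explicit presentation is precisely Pitsch's method, which is what extends the result down to $g=4$. Your caveat that the spectral-sequence bookkeeping is the delicate part is accurate (and indeed Harer's original paper had an erroneous torsion term, as the remark following the theorem notes). One small point: you write that universal coefficients plus $\HH_1=0$ forces a cyclic group of positive rank to be infinite cyclic, but universal coefficients only tells you $\HH^2 \cong \Hom(\HH_2,\Z)$, which does not by itself rule out a torsion summand in $\HH_2$; the torsion-freeness has to come out of the spectral-sequence or Hopf-formula computation directly, not from this step.
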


\begin{remark}
Harer's paper \cite{HarerSecond} only considers the case $g \geq 5$ and contains an erroneous
torsion term.  The case $g = 4$ is essentially due to Pitsch \cite{PitschH2}, though his
paper only considers surfaces with boundary.  For more information, see the survey
\cite{KorkmazSurvey}, which also gives a very short proof (due to Harer) of Theorem \ref{theorem:h1mod}.
\end{remark}

\begin{theorem}[{Hain, \cite{HainTorelli}}]
\label{theorem:h1modl}
For $g \geq 3$ and $L \geq 2$, we have $\HH^1(\Mod_g(L);\Z) = 0$.
\end{theorem}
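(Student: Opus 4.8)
The plan is to deduce the vanishing of this cohomology group from the finiteness of the corresponding homology group. Since $\HH_0(\Mod_g(L);\Z) \cong \Z$ is free, the universal coefficients theorem gives a natural isomorphism $\HH^1(\Mod_g(L);\Z) \cong \Hom(\HH_1(\Mod_g(L);\Z),\Z)$. Thus it is enough to show that $\HH_1(\Mod_g(L);\Z)$ is a torsion group, since $\Hom(T,\Z) = 0$ for every torsion abelian group $T$; in fact I will show that it is finite.

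To control $\HH_1(\Mod_g(L);\Z)$ I would invoke the tail of the five-term exact sequence \eqref{eqn:coinvexseq} (taken with $b = 0$) associated to the extension $1 \longrightarrow \Torelli_g \longrightarrow \Mod_g(L) \longrightarrow \Sp_{2g}(\Z,L) \longrightarrow 1$, namely
$$(\HH_1(\Torelli_g;\Z))_{\Sp_{2g}(\Z,L)} \longrightarrow \HH_1(\Mod_g(L);\Z) \longrightarrow \HH_1(\Sp_{2g}(\Z,L);\Z) \longrightarrow 0.$$
This exhibits $\HH_1(\Mod_g(L);\Z)$ as an extension of a subgroup of $\HH_1(\Sp_{2g}(\Z,L);\Z)$ by a quotient of $(\HH_1(\Torelli_g;\Z))_{\Sp_{2g}(\Z,L)}$, so it suffices to prove that these two outer groups are both finite: an extension of a finite group by a finite group is finite.

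The right-hand term is finite by Kazhdan's theorem. Indeed, Corollary \ref{corollary:splabel} (for $L$ odd) identifies $\HH_1(\Sp_{2g}(\Z,L);\Z)$ with the finite group $\SpLie_{2g}(\Z/L)$, while Corollary \ref{corollary:sato} (for $L$ even) identifies it with an extension of $\SpLie_{2g}(\Z/L)$ by $\HH_1(\Sigma_g;\Z/2)$, again finite. The left-hand term is finite by Proposition \ref{proposition:torellicoinvariants}, which (setting $H_L = \HH_1(\Sigma_g;\Z/L)$) computes $(\HH_1(\Torelli_g;\Z))_{\Sp_{2g}(\Z,L)}$ explicitly as $(\wedge^3 H_L)/H_L$ for $L$ odd and as $\overline{B}_2(2g) \oplus (\wedge^3 H_L)/H_L$ for $L$ even, both of which are finite. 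Alternatively, one can sidestep the full strength of that proposition: $\HH_1(\Torelli_g;\Z)$ is finitely generated for $g \geq 3$ by Johnson's computation, so its coinvariants are finitely generated, while Lemma \ref{lemma:kill2torsion} shows these coinvariants are annihilated by $L$, and a finitely generated $L$-torsion abelian group is finite. Feeding both finiteness statements into the displayed exact sequence shows $\HH_1(\Mod_g(L);\Z)$ is finite, which completes the argument.

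The step I expect to carry the real content is the finiteness of the Torelli coinvariants. The subtlety is that $\HH_1(\Torelli_g;\Z)$ itself has positive rank (coming from the Johnson homomorphism onto $(\wedge^3 H)/H$), so the vanishing of $\HH^1(\Mod_g(L);\Z)$ genuinely relies on the level subgroup $\Sp_{2g}(\Z,L)$ still having no nonzero rational coinvariants on this nontrivial symplectic representation; in the plan above this is exactly what Proposition \ref{proposition:torellicoinvariants}, or equivalently Lemma \ref{lemma:kill2torsion}, supplies. Everything else is formal manipulation of the five-term exact sequence and universal coefficients.
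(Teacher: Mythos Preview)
Your argument is correct. Note, however, that the paper does not actually prove this theorem; it is quoted from Hain \cite{HainTorelli} and used as an input. Your approach supplies an independent proof using machinery developed later in the paper (Lemma~\ref{lemma:kill2torsion} or Proposition~\ref{proposition:torellicoinvariants} for the Torelli side, and Theorem~\ref{theorem:kazhdan} or Corollaries~\ref{corollary:splabel} and~\ref{corollary:sato} for the symplectic side). Since none of those results relies on Theorem~\ref{theorem:h1modl}, there is no circularity. One small slip: the map $\HH_1(\Mod_g(L);\Z) \to \HH_1(\Sp_{2g}(\Z,L);\Z)$ in your exact sequence is surjective, so the middle group is an extension of the \emph{full} right-hand group (not merely a subgroup of it) by a quotient of the left-hand group---but this does not affect the finiteness argument. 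What your route buys is a proof internal to the paper's own circle of ideas, with the real content carried by Lemma~\ref{lemma:kill2torsion} (the crossed-lantern computation), exactly as you identify in your final paragraph.
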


\begin{theorem}[{Putman, \cite{PutmanSecond}}]
\label{theorem:h2modl}
For $g \geq 5$ and $L \geq 2$, we have $\HH_2(\Mod_g(L);\Q) \cong \Q$.
\end{theorem}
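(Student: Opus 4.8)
The plan is to run the rational Hochschild--Serre spectral sequence for the Torelli extension
$$1 \longrightarrow \Torelli_g \longrightarrow \Mod_g(L) \longrightarrow \Sp_{2g}(\Z,L) \longrightarrow 1$$
and show that on the antidiagonal computing $\HH_2$ only $\HH_2(\Sp_{2g}(\Z,L);\Q)$ contributes. Write $H_{\Q} = \HH_1(\Sigma_g;\Q)$ and $U = (\wedge^3 H_{\Q})/H_{\Q}$, so that Johnson's computation of $\HH_1(\Torelli_g;\Z)$ gives $\HH_1(\Torelli_g;\Q) \cong U$ (the Birman--Craggs--Johnson summand $\overline{B}_2(2g)$ is $2$-torsion and vanishes rationally). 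The relevant $E^2$ entries are $E^2_{2,0} = \HH_2(\Sp_{2g}(\Z,L);\Q)$, $E^2_{1,1} = \HH_1(\Sp_{2g}(\Z,L);U)$, and $E^2_{0,2} = (\HH_2(\Torelli_g;\Q))_{\Sp_{2g}(\Z,L)}$. By Theorem \ref{theorem:borel} the first is $\Q$, and this is the copy we are after. First I would verify that $E^2_{2,0}$ survives: its only possible outgoing differential lands in $E^2_{0,1} = (\HH_1(\Torelli_g;\Q))_{\Sp_{2g}(\Z,L)}$, which vanishes since Lemma \ref{lemma:splinv} identifies this coinvariant group rationally with $((\wedge^3 H_L)/H_L)\otimes\Q = 0$. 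Since $U$ is a nontrivial irreducible algebraic representation of $\Sp_{2g}$ for $g \geq 3$, Borel's vanishing theorem for the cohomology of arithmetic groups with coefficients in a nontrivial rational representation (together with self-duality of symplectic representations and Lemma \ref{lemma:duality}) kills $E^2_{1,1}$ in the stable range. The whole problem thus reduces to showing that the edge term $E^\infty_{0,2}$, a subquotient of $(\HH_2(\Torelli_g;\Q))_{\Sp_{2g}(\Z,L)}$, is zero.

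To control $E_{0,2}$ I would first treat the analogous extension $1 \to \Torelli_g \to \Mod_g \to \Sp_{2g}(\Z) \to 1$, whose abutment is $\HH_2(\Mod_g;\Q) \cong \Q$ by Theorem \ref{theorem:h2mod}. There $E^2_{2,0} = \HH_2(\Sp_{2g}(\Z);\Q) = \Q$ is again untouched by differentials, while the two differentials that could hit $E_{0,2}$ originate from $E^2_{2,1} = \HH_2(\Sp_{2g}(\Z);U)$ and $E^2_{3,0} = \HH_3(\Sp_{2g}(\Z);\Q)$. The first vanishes by Borel's vanishing with nontrivial coefficients, and the second vanishes because the stable rational cohomology of $\Sp_{2g}(\Z)$ is a polynomial algebra on generators in degrees $2, 6, 10, \ldots$ and so is zero in degree $3$. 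Comparing the associated graded pieces of $\HH_2(\Mod_g;\Q) = \Q$ then forces $E^\infty_{0,2} = E^2_{0,2} = (\HH_2(\Torelli_g;\Q))_{\Sp_{2g}(\Z)} = 0$.

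It remains to pass from the full coinvariants to the congruence coinvariants, that is, to deduce $(\HH_2(\Torelli_g;\Q))_{\Sp_{2g}(\Z,L)} = 0$ from $(\HH_2(\Torelli_g;\Q))_{\Sp_{2g}(\Z)} = 0$. This step is not formal, since coinvariants over the smaller group $\Sp_{2g}(\Z,L)$ can a priori be strictly larger. The mechanism I would use is algebraicity: both $\Sp_{2g}(\Z)$ and $\Sp_{2g}(\Z,L)$ are Zariski dense in $\Sp_{2g}$, so if $\HH_2(\Torelli_g;\Q)$ is a finite-dimensional algebraic (hence semisimple) representation of $\Sp_{2g}$ in the stable range, then its coinvariants under any Zariski-dense subgroup coincide with the full trivial-isotypic quotient and are in particular independent of $L$. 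Establishing this algebraicity (and the underlying finite-dimensionality) is the main obstacle, and I expect it to be the technical heart of the argument; the natural tool is Hain's infinitesimal presentation of the Torelli Lie algebra via relative Malcev completion, which endows the low-degree rational homology of $\Torelli_g$ with an algebraic $\Sp_{2g}$-action in the stable range.

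Granting that input, $(\HH_2(\Torelli_g;\Q))_{\Sp_{2g}(\Z,L)} = (\HH_2(\Torelli_g;\Q))_{\Sp_{2g}(\Z)} = 0$, so the antidiagonal computing $\HH_2(\Mod_g(L);\Q)$ collapses to $E^\infty_{2,0} = \Q$, giving $\HH_2(\Mod_g(L);\Q) \cong \Q$. The hypothesis $g \geq 5$ is exactly what is needed to place both the degree-$1$ and degree-$2$ nontrivial-coefficient cohomology of the symplectic group and Harer's theorem simultaneously in their stable ranges; if one wished to avoid Hain's machinery, the only remaining route I see is to replace it by a direct computation of the trivial-isotypic part of $\HH_2(\Torelli_g;\Q)$ together with the action of the finite group $\Sp_{2g}(\Z/L) = \Mod_g/\Mod_g(L)$ on the congruence coinvariants, which appears to be strictly harder.
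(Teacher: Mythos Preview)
The paper does not itself prove this theorem; it is quoted from the author's earlier preprint \cite{PutmanSecond}, so there is no in-paper proof to compare against. Your spectral-sequence outline via the Torelli extension is a natural attempt, and the reductions of the $E^2_{2,0}$ and $E^2_{1,1}$ terms are correct. However, the step you yourself flag as the ``main obstacle'' is a genuine gap that Hain's Malcev-completion work does not close. Whether $\HH_2(\Torelli_g;\Q)$ is finite-dimensional is essentially the question of whether the Torelli group has finite second Betti number, closely tied to the long-open problem of its finite presentability (cf.\ the remark after Theorem~\ref{theorem:torelligen}). Hain's infinitesimal presentation governs the Malcev Lie algebra $\mathfrak{t}_g$, not the group homology directly, and the comparison map from $\HH_2(\Torelli_g;\Q)$ to the quadratic-relation space need not be injective. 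Without finite-dimensionality or a decomposition into algebraic irreducibles, your Zariski-density argument does not apply: knowing $(\HH_2(\Torelli_g;\Q))_{\Sp_{2g}(\Z)} = 0$ only tells you that the $\Q[\Sp_{2g}(\Z/L)]$-module $(\HH_2(\Torelli_g;\Q))_{\Sp_{2g}(\Z,L)}$ has trivial coinvariants under the finite quotient, but it could perfectly well be a nonzero sum of nontrivial irreducibles of $\Sp_{2g}(\Z/L)$.

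The proof in \cite{PutmanSecond} takes a different route precisely to avoid the unknown structure of $\HH_2(\Torelli_g;\Q)$; it does not pass through the Torelli extension. So the missing ingredient in your proposal is not a technicality to be filled in but (at the time, and arguably still) an open problem, and the published argument circumvents rather than resolves it.
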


To relate $\Moduli_g$ and $\PPAV_g$, we will need the following folklore result.  I do
not know a reference for it, so I include a proof.

\begin{lemma}
\label{lemma:modsp}
For $g \geq 4$, the map $\Mod_g \rightarrow \Sp_{2g}(\Z)$ induces an isomorphism
$\HH_2(\Mod_g;\Z) \cong \HH_2(\Sp_{2g}(\Z))$.
\end{lemma}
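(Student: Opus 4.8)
The plan is to run the five-term exact sequence of the Torelli extension and reduce the statement entirely to results already in hand. Recall that the map $\Mod_g \rightarrow \Sp_{2g}(\Z)$ is surjective with kernel the Torelli group $\Torelli_g$, giving the short exact sequence
$$1 \longrightarrow \Torelli_g \longrightarrow \Mod_g \longrightarrow \Sp_{2g}(\Z) \longrightarrow 1.$$
Taking homology with trivial $\Z$ coefficients, the associated five-term exact sequence reads
$$\HH_2(\Mod_g;\Z) \longrightarrow \HH_2(\Sp_{2g}(\Z);\Z) \longrightarrow (\HH_1(\Torelli_g;\Z))_{\Sp_{2g}(\Z)} \longrightarrow \HH_1(\Mod_g;\Z) \longrightarrow \HH_1(\Sp_{2g}(\Z);\Z) \longrightarrow 0.$$
The leftmost arrow is precisely the induced map whose bijectivity we must establish, so I would treat its surjectivity and injectivity in turn.

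First I would prove surjectivity. By exactness of the five-term sequence at $\HH_2(\Sp_{2g}(\Z);\Z)$, the cokernel of $\HH_2(\Mod_g;\Z) \rightarrow \HH_2(\Sp_{2g}(\Z);\Z)$ is isomorphic to the image of the transgression and hence injects into the coinvariant group $(\HH_1(\Torelli_g;\Z))_{\Sp_{2g}(\Z)}$. The crucial point is that this coinvariant group vanishes: applying Lemma \ref{lemma:kill2torsion} with $L=1$ (recall the convention $\Sp_{2g}(\Z,1) = \Sp_{2g}(\Z)$), every class $v \in (\HH_1(\Torelli_g;\Z))_{\Sp_{2g}(\Z)}$ satisfies $1 \cdot v = 0$, so $v = 0$. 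Consequently the cokernel is trivial and the map is surjective. (One does not even need the vanishing $\HH_1(\Mod_g;\Z) = 0$ from Theorem \ref{theorem:h1mod} for this step, though it is of course consistent with the exact sequence.)

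Finally injectivity comes for free. By Theorem \ref{theorem:h2mod} we have $\HH_2(\Mod_g;\Z) \cong \Z$ and by Theorem \ref{theorem:hsp} we have $\HH_2(\Sp_{2g}(\Z);\Z) \cong \Z$ for $g \geq 4$. A surjective homomorphism $\Z \rightarrow \Z$ carries a generator to a generator and is therefore an isomorphism, completing the argument. I expect the surjectivity step to be the only real obstacle, and even that is entirely absorbed by the coinvariant computation of Lemma \ref{lemma:kill2torsion}; once both homology groups are known to be infinite cyclic the injectivity is automatic, so the proof is essentially formal.
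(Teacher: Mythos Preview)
Your argument is essentially the paper's own: both run the five-term exact sequence of the Torelli extension, kill the coinvariants via Lemma~\ref{lemma:kill2torsion} with $L=1$ to get surjectivity, and then conclude by knowing both groups. The one point to flag is your appeal to Theorem~\ref{theorem:hsp} for $\HH_2(\Sp_{2g}(\Z);\Z) \cong \Z$: in this paper's internal logic that statement is \emph{derived from} Lemma~\ref{lemma:modsp} (see the remark immediately following the proof), so invoking it here is circular. The paper avoids this by citing Theorem~\ref{theorem:borel} (Borel) instead, which gives only that $\HH_2(\Sp_{2g}(\Z);\Z)$ has rank~$1$; combined with surjectivity from $\Z$, this forces the target to be $\Z$ and the map to be an isomorphism, and then Theorem~\ref{theorem:hsp} follows as a corollary.
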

\begin{proof}
Theorem \ref{theorem:h2mod} says that $\HH_2(\Mod_g;\Z) \cong \Z$ and Theorem \ref{theorem:borel} says
that $\HH_2(\Sp_{2g}(\Z);\Z)$ has rank $1$, so it is enough to show that the map 
$\HH_2(\Mod_g;\Z) \rightarrow \HH_2(\Sp_{2g}(\Z);\Z)$ is a surjection.  The $5$ term exact sequence
coming from the short exact sequence
$$1 \longrightarrow \Torelli_g \longrightarrow \Mod_g \longrightarrow \Sp_{2g}(\Z) \longrightarrow 1$$
contains the segment 
$$\HH_2(\Mod_g;\Z) \longrightarrow \HH_2(\Sp_{2g}(\Z);\Z) \longrightarrow (\HH_1(\Torelli_g;\Z))_{\Sp_{2g}(\Z)}.$$
By Lemma \ref{lemma:kill2torsion}, we have $(\HH_1(\Torelli_g;\Z))_{\Sp_{2g}(\Z)} = 0$, and the lemma 
follows.
\end{proof}

\begin{remark}
One can obtain Theorem \ref{theorem:hsp} from the above results as follows.  First,
the map $\Mod_g \rightarrow \Sp_{2g}(\Z)$ is a surjection, so Theorem \ref{theorem:h1mod} implies that
$\HH_1(\Sp_{2g}(\Z);\Z) = 0$ for $g \geq 3$.  Second, one can combine Lemma \ref{lemma:modsp}
and Theorem \ref{theorem:h2mod} to deduce that $\HH_2(\Sp_{2g}(\Z);\Z) \cong \Z$ for $g \geq 4$.
\end{remark}

Next, let $\Line_g \in \Pic(\Moduli_g)$ be the determinantal bundle of the Hodge bundle, as in the
introduction.  As is shown in \cite[Corollary 17.4]{HainModuliTran}, the line bundle $\Line_g$ is the
pullback of $\PPAVLine_g \in \Pic(\PPAV_g)$.
We will need two properties of $\Line_g$.  First, it generates $\Pic(\Moduli_g)$.

\begin{theorem}[{Arbarello-Cornalba \cite{ArbarelloCornalba}, see also \cite[Theorem 17.3]{HainModuliTran}}]
\label{theorem:ppavlinegen2}
For $g \geq 4$, the class $c_1(\Line_g)$ generates $\HH^2(\Mod_g;\Z) \cong \Z$.
\end{theorem}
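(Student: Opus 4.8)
The plan is to deduce this from Hain's computation of $c_1(\PPAVLine_g)$ (Theorem \ref{theorem:ppavlinegen}) together with the comparison between $\Mod_g$ and $\Sp_{2g}(\Z)$ established in Lemma \ref{lemma:modsp}. Recall that $\Line_g$ is the pullback of $\PPAVLine_g$ along the orbifold map $\Moduli_g \rightarrow \PPAV_g$, which on orbifold fundamental groups is the standard representation $\rho : \Mod_g \rightarrow \Sp_{2g}(\Z)$. By naturality of the first Chern class, the induced map $\rho^{\ast} : \HH^2(\Sp_{2g}(\Z);\Z) \rightarrow \HH^2(\Mod_g;\Z)$ sends $c_1(\PPAVLine_g)$ to $c_1(\Line_g)$. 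Since Theorem \ref{theorem:ppavlinegen} tells us that $c_1(\PPAVLine_g)$ generates $\HH^2(\Sp_{2g}(\Z);\Z) \cong \Z$, it suffices to show that $\rho^{\ast}$ is an isomorphism.

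To see that $\rho^{\ast}$ is an isomorphism, I would pass through the universal coefficients theorem. First I would note that $\HH_1(\Mod_g;\Z) = 0$ for $g \geq 3$ by Theorem \ref{theorem:h1mod} (Powell) and $\HH_1(\Sp_{2g}(\Z);\Z) = 0$ for $g \geq 3$ by Theorem \ref{theorem:hsp}, so the $\mathrm{Ext}$ terms in both universal coefficients sequences vanish. This yields a commutative square
$$\begin{CD}
\HH^2(\Sp_{2g}(\Z);\Z) @>{\cong}>> \Hom(\HH_2(\Sp_{2g}(\Z);\Z),\Z) \\
@V{\rho^{\ast}}VV                  @VV{(\rho_{\ast})^{\ast}}V \\
\HH^2(\Mod_g;\Z)       @>{\cong}>> \Hom(\HH_2(\Mod_g;\Z),\Z)
\end{CD}$$
in which the horizontal maps are isomorphisms and the right vertical map is the $\Z$-dual of the homology pushforward $\rho_{\ast} : \HH_2(\Mod_g;\Z) \rightarrow \HH_2(\Sp_{2g}(\Z);\Z)$. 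For $g \geq 4$, Lemma \ref{lemma:modsp} asserts that $\rho_{\ast}$ is an isomorphism, and hence so is its dual; tracing the square, $\rho^{\ast}$ is an isomorphism as well.

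Combining the two steps, $\rho^{\ast}$ carries the generator $c_1(\PPAVLine_g)$ of $\HH^2(\Sp_{2g}(\Z);\Z)$ to a generator of $\HH^2(\Mod_g;\Z)$, and this image is exactly $c_1(\Line_g)$, proving the theorem. The only real content beyond bookkeeping is the conversion of the homology isomorphism of Lemma \ref{lemma:modsp} into a cohomology isomorphism, so the main thing to be careful about is verifying that both $\HH_1$ groups vanish in the relevant range (which forces the $\mathrm{Ext}$ contributions away and makes $\HH^2$ purely the $\Hom$-dual of $\HH_2$); everything else is formal naturality. The hypothesis $g \geq 4$ is what is needed for both Harer's theorem $\HH_2(\Mod_g;\Z) \cong \Z$ (Theorem \ref{theorem:h2mod}) and for Lemma \ref{lemma:modsp}, so no stronger bound on $g$ is required.
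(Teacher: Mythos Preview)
Your argument is correct. The paper does not supply its own proof of this theorem; it simply cites Arbarello--Cornalba and Hain. Your derivation from Theorem \ref{theorem:ppavlinegen} and Lemma \ref{lemma:modsp} via universal coefficients is valid and fits entirely within the paper's framework: the pullback identification $\Line_g = \rho^{\ast}\PPAVLine_g$ is stated just before Theorem \ref{theorem:ppavlinegen2}, the vanishing of both $\HH_1$ groups is available (Theorems \ref{theorem:h1mod} and \ref{theorem:hsp}), and Lemma \ref{lemma:modsp} is proved independently of Theorem \ref{theorem:ppavlinegen2}, so there is no circularity. In effect you have supplied an internal proof where the paper only gives a citation; this is a mild bonus, since it shows the result for $\Moduli_g$ can be recovered from the corresponding result for $\PPAV_g$ once one knows $\rho_{\ast}$ is an isomorphism on $\HH_2$.
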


For the second, let $\Line_g(L) \in \Pic(\Moduli_g(L))$ be the pullback of $\Line_g$.  We will
need the following recent theorem of G.\ Farkas.

\begin{theorem}[{Farkas, \cite[Theorem 0.2]{Farkas}}]
\label{theorem:farkas}
For $g \geq 3$ and $L \geq 2$ such that $L$ is even, there exists some $\theta_g(L) \in \Pic(\Moduli_g(L))$
such that $4 \theta_g(L) - \Line_g(L)$ is torsion.
\end{theorem}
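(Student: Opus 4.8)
The plan is to exhibit an explicit fourth root of $\Line_g(L)$ on $\Moduli_g(L)$ coming from the universal theta characteristic that the level structure provides, which is what one extracts from Farkas's divisor-class computation. Since $L$ is even, a level $L$ structure determines a symplectic basis for $\HH_1(\Sigma_g;\Z/2)$, and the classical recipe attaching to such a basis the quadratic form that vanishes on it produces a distinguished \emph{even} theta characteristic (Arf invariant $0$). First I would check that these fit together into a universal even theta characteristic, i.e.\ a line bundle $\theta$ on the universal curve $\pi \colon \Curves \to \Moduli_g(L)$ with $\theta^{\otimes 2} \cong \omega$, where $\omega$ is the relative dualizing sheaf.

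Next I would compute the determinant-of-cohomology line bundle $\det R\pi_{\ast}\theta$ using Grothendieck-Riemann-Roch (Mumford's formula). Writing $K = c_1(\omega)$, using $c_1(\theta) = \tfrac{1}{2}K$ rationally, and using $\pi_{\ast}(K^2) = 12\,\Line_g$ on the open moduli space, the computation gives
$$c_1(\det R\pi_{\ast}\theta) = \pi_{\ast}\Bigl(\tfrac{1}{2}c_1(\theta)^2 - \tfrac{1}{2}c_1(\theta)\,K + \tfrac{1}{12}K^2\Bigr) = -\tfrac{1}{2}\Line_g.$$
Thus $\det R\pi_{\ast}\theta$ is already a square root of $\Line_g(L)^{-1}$, which only recovers divisibility by $2$ (and matches the theta-null square root of Lemma \ref{lemma:thetanulls}).

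The heart of the argument, and the step I expect to be the main obstacle, is to upgrade this to divisibility by $4$ by analyzing the theta-null divisor $D = \{h^0(\theta) > 0\}$. Because $\theta$ is even, $h^0(\theta)$ is everywhere even, so along $D$ it jumps from $0$ to $2$. Since $\chi(\theta) = 0$, one may represent $R\pi_{\ast}\theta$ locally by a two-term complex $[\mathcal{E}^0 \to \mathcal{E}^1]$ of vector bundles of equal rank; the determinant of the map $\mathcal{E}^0 \to \mathcal{E}^1$ is then a section of $(\det R\pi_{\ast}\theta)^{-1}$ whose zero locus is the degeneracy locus. The even parity forces the rank to drop by $2$ at the generic point of $D$, so this section vanishes to order $2$, giving $(\det R\pi_{\ast}\theta)^{-1} \cong \mathcal{O}(2D)$ modulo torsion. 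Combined with the Chern class above this yields $[D] = \tfrac{1}{4}\Line_g$, i.e.\ $\Line_g(L) \cong \mathcal{O}(4D)$, which is the desired divisibility by $4$. The delicate points are to verify that $D$ is reduced with the map dropping rank exactly $2$ (so the multiplicity is precisely $2$, not larger) and that the resulting relation holds integrally rather than merely rationally in $\Pic(\Moduli_g(L))$; establishing exactly these facts is the content of Farkas's theta-null divisor computation.
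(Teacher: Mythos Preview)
Your sketch is correct and reproduces the content of Farkas's argument. The paper itself does not prove this theorem; it simply cites Farkas and, in the remark following the statement, explains how to extract the result: Farkas works on the moduli space $\SpinModuli^{+}_g$ of curves with even spin structures, computes the class of the theta-null divisor there (via exactly the GRR and rank-drop analysis you outline), and one then pulls back along the finite cover $\Moduli_g(L) \to \SpinModuli^{+}_g$, which exists precisely because, as you note, an even level structure singles out an even theta characteristic. Your construction of the universal even theta characteristic directly on $\Moduli_g(L)$ is nothing other than this pullback made explicit, so the two routes coincide.
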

\begin{remark}
The statement of \cite[Theorem 0.2]{Farkas} differs from Theorem \ref{theorem:farkas} in three ways.  First, the
line bundle $\theta_{\text{null}}$ given there actually lives in Picard group of the moduli space $\SpinModuli^{+}_g$ of curves with even spin
structures.  For $L$ even, the orbifold $\Moduli_g(L)$ is a finite cover of $\SpinModuli^{+}_g$, so the line bundle
whose existence is asserted by Theorem \ref{theorem:farkas} can be obtained by pulling back $\theta_{\text{null}}$.  Second, the statement
of \cite[Theorem 0.2]{Farkas} actually refers to a compactification of $\SpinModuli^{+}_g$ and contains a number
of extraneous boundary terms that vanish when restricted to the open moduli space.  Third, there is no
mention of torsion in \cite[Theorem 0.2]{Farkas} because Farkas is making a computation in the rational
Picard group.
\end{remark}

\subsection{The proofs of Theorems \ref{theorem:mg} and \ref{theorem:mzgen} and \ref{theorem:mh2}}
\label{section:proofsm}

We wish to apply Lemma \ref{lemma:maintool} with $(X,G) = (\Teich_g,\Mod_{g})$ and $G' = \Mod_g(L)$
and $\lambda = \Line_g$ and
$$n = \begin{cases}
1 & \text{if $L$ is odd,} \\
4 & \text{if $L$ is even.}
\end{cases}$$
The results of \S \ref{section:modulidef}--\ref{section:moduliknown}
show that all the conditions of Lemma \ref{lemma:maintool} are satisfied except possibly
the condition that the image of $\HH_2(\Mod_g(L);\Z)$ in $\HH_2(\Mod_{g};\Z) \cong \Z$ is
$m \Z$ for some $m$ with $1 \leq m \leq n$, which we now prove (we remark that Lemma \ref{lemma:maintool} will
then imply that $m=n$).

By Lemma \ref{lemma:modsp}, it is enough to show that the image of $\HH_2(\Mod_g(L);\Z)$
in $\HH_2(\Sp_{2g}(\Z);\Z) \cong \Z$ is $m \Z$ for some $m$ with $1 \leq m \leq n$.  The $5$-term exact
sequence associated with the short exact sequence
$$1 \longrightarrow \Torelli_g \longrightarrow \Mod_g(L) \longrightarrow \Sp_{2g}(\Z,L) \longrightarrow 1$$
contains the segment 
$$\HH_2(\Mod_g(L);\Z) \longrightarrow \HH_2(\Sp_{2g}(\Z,L);\Z) \longrightarrow (\HH_1(\Torelli_g;\Z))_{\Sp_{2g}(\Z,L)}
\stackrel{i}{\longrightarrow} \HH_1(\Mod_g(L);\Z).$$
Assume first that $L$ is odd.  Lemma \ref{lemma:extendtorelli} says that $\Ker(i) = 0$, so the map
$\HH_2(\Mod_g(L);\Z) \rightarrow \HH_2(\Sp_{2g}(\Z,L);\Z)$ is surjective.  Theorem \ref{theorem:splh2} says that
the map $\HH_2(\Sp_{2g}(\Z,L);\Z) \rightarrow \HH_2(\Sp_{2g}(\Z);\Z)$ is surjective, so we conclude that
the map $\HH_2(\Mod_g(L);\Z) \rightarrow \HH_2(\Sp_{2g}(\Z);\Z)$ is surjective, as desired.

Assume now that $L$ is even.  Lemma \ref{lemma:extendtorelli} says that $\Ker(i)$ is either $0$ or
$\Z/2$, so the cokernel of the map $\HH_2(\Mod_g(L);\Z) \rightarrow \HH_2(\Sp_{2g}(\Z,L);\Z)$ is either
$0$ or $\Z/2$.  Theorem \ref{theorem:splh2} says that the image of
$\HH_2(\Sp_{2g}(\Z,L);\Z)$ in $\HH_2(\Sp_{2g}(\Z);\Z) \cong \Z$ is $2 \Z$, so we conclude that
the image of the map $\HH_2(\Mod_g(L);\Z) \rightarrow \HH_2(\Sp_{2g}(\Z);\Z)$ is either $2 \Z$ or $4 \Z$ (and hence by
Lemma \ref{lemma:maintool} it is $4 \Z$), as desired.

\subsection{The abelianization of $\Mod_g(L)$}

The following theorem subsumes Theorem \ref{theorem:modlabel1} from the introduction.

\begin{theorem}
\label{theorem:modlabel}
Fix $g \geq 5$ and $0 \leq b \leq 1$ and $L \geq 2$ such that $4 \nmid L$.  Set $H_L = \HH_1(\Sigma_g;\Z/L)$.  
There is then a short exact sequence
$$0 \longrightarrow K_{g,b} \longrightarrow \HH_1(\Mod_{g,b}(L);\Z) \longrightarrow \HH_1(\Sp_{2g}(\Z,L);\Z)
\longrightarrow 0,$$
where
$$K_{g,1} \cong \wedge^3 H_L \quad \text{and} \quad K_{g} \cong (\wedge^3 H_L) / H_L$$
if $L$ is odd and
$$K_{g,1} \cong (B_2(2g) / B_0(2g)) \oplus \wedge^3 H_L \quad \text{and} \quad 
K_{g} \cong (\overline{B}_2(2g) / \overline{B}_0(2g)) \oplus (\wedge^3 H_L) / H_L$$
if $L$ is even.
\end{theorem}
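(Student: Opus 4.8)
The plan is to extract everything from the five-term exact sequence associated to
$$1 \longrightarrow \Torelli_{g,b} \longrightarrow \Mod_{g,b}(L) \longrightarrow \Sp_{2g}(\Z,L) \longrightarrow 1,$$
of which \eqref{eqn:coinvexseq} records the last three terms. Writing out the full sequence
$$\HH_2(\Mod_{g,b}(L);\Z) \stackrel{\alpha}{\longrightarrow} \HH_2(\Sp_{2g}(\Z,L);\Z) \stackrel{\beta}{\longrightarrow} (\HH_1(\Torelli_{g,b};\Z))_{\Sp_{2g}(\Z,L)} \stackrel{j}{\longrightarrow} \HH_1(\Mod_{g,b}(L);\Z) \longrightarrow \HH_1(\Sp_{2g}(\Z,L);\Z) \longrightarrow 0,$$
the last three terms immediately give the desired short exact sequence with $K_{g,b} = \Image(j)$, and exactness identifies $\Ker(j) = \Image(\beta) \cong \operatorname{coker}(\alpha)$. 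Thus the theorem reduces to computing $\Ker(j)$, after which Proposition \ref{proposition:torellicoinvariants} supplies the source group and hence $K_{g,b} \cong (\HH_1(\Torelli_{g,b};\Z))_{\Sp_{2g}(\Z,L)}/\Ker(j)$.

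For $L$ odd I would simply invoke Lemma \ref{lemma:extendtorelli}, which says $j$ is injective; then $K_{g,b}$ equals the coinvariant group of Proposition \ref{proposition:torellicoinvariants}, namely $\wedge^3 H_L$ when $b=1$ and $(\wedge^3 H_L)/H_L$ when $b=0$, finishing the odd case. For $L$ even, Lemma \ref{lemma:extendtorelli} tells me $\Ker(j)$ is either $0$ or a copy of $\Z/2$ living in the constant term ($B_0(2g)$ if $b=1$, $\overline{B}_0(2g)$ if $b=0$); quotienting the source of Proposition \ref{proposition:torellicoinvariants} by this $\Z/2$ yields exactly $(B_2(2g)/B_0(2g)) \oplus \wedge^3 H_L$ and $(\overline{B}_2(2g)/\overline{B}_0(2g)) \oplus (\wedge^3 H_L)/H_L$. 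So the entire even-$L$ content is to rule out $\Ker(j)=0$, i.e.\ to prove that the constant term genuinely dies in $\HH_1(\Mod_{g,b}(L);\Z)$.

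I would treat the closed case $b=0$ first, by playing the two second-homology computations against one another. Composing $\alpha$ with the restriction $\rho\colon \HH_2(\Sp_{2g}(\Z,L);\Z) \to \HH_2(\Sp_{2g}(\Z);\Z) \cong \Z$ recovers the map induced by $\Mod_g(L) \to \Sp_{2g}(\Z)$, whose image is $4\Z$ by Theorem \ref{theorem:mh2} (using Lemma \ref{lemma:modsp} to identify $\HH_2(\Mod_g;\Z)$ with $\HH_2(\Sp_{2g}(\Z);\Z)$). But $\rho$ itself has image $2\Z$ by Theorem \ref{theorem:splh2}. Hence $\rho$ sends the subgroup $\Image(\alpha)$ into $4\Z$ while surjecting onto $2\Z$, so it descends to a surjection $\operatorname{coker}(\alpha) \twoheadrightarrow 2\Z/4\Z \cong \Z/2$. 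In particular $\operatorname{coker}(\alpha) = \Ker(j)$ is nonzero, which forces $\Ker(j) = \overline{B}_0(2g)$ and gives the formula for $K_g$.

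For the bordered case $b=1$ I would transfer along the capping homomorphism $c\colon \Mod_{g,1}(L) \to \Mod_g(L)$ obtained by gluing a disk to the boundary, which covers the identity on $\Sp_{2g}(\Z,L)$. Naturality of the edge map then gives $\alpha_1 = \alpha_0 \circ c_{\ast}$ on $\HH_2$, so $\Image(\alpha_1) \subseteq \Image(\alpha_0)$; since the latter is a proper subgroup by the $b=0$ step, $\operatorname{coker}(\alpha_1)$ again surjects onto $\Z/2$ and is nonzero, whence $\Ker(j) = B_0(2g)$ as needed. I expect the main obstacle to be exactly this nonvanishing of $\Ker(j)$ for even $L$: the five-term manipulations and the odd case are formal, but showing the constant term dies is precisely where the divisibility-by-$4$ of $\Line_g(L)$ (encoded in Theorem \ref{theorem:mh2}) must be brought to bear, and it is the mismatch between the factor $4$ there and the factor $2$ in Theorem \ref{theorem:splh2} that manufactures the surviving $\Z/2$.
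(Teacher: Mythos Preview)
Your proof is correct and follows essentially the same route as the paper's. You set up the five-term exact sequence, dispose of the odd case via Lemma \ref{lemma:extendtorelli} and Proposition \ref{proposition:torellicoinvariants}, and for even $L$ with $b=0$ you compare the images $4\Z$ (from Theorem \ref{theorem:mh2} and Lemma \ref{lemma:modsp}) and $2\Z$ (from Theorem \ref{theorem:splh2}) to force $\operatorname{coker}(\alpha)\neq 0$; the paper does the same, just phrasing it as a back-reference to \S\ref{section:proofsm}. For $b=1$ your use of the capping map to get $\Image(\alpha_1)\subseteq\Image(\alpha_0)$ is exactly the dual of the paper's naturality diagram, which instead argues that $\beta_0$ factors through $\beta_1$ and hence $\beta_1\neq 0$.
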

\begin{proof}
The $5$-term exact sequence coming from the short exact sequence
$$1 \longrightarrow \Torelli_{g,b} \longrightarrow \Mod_{g,b}(L) \longrightarrow \Sp_{2g}(\Z,L) \longrightarrow 1$$
contains the subsequence
$$(\HH_1(\Torelli_{g,b};\Z))_{\Sp_{2g}(\Z,L)} \stackrel{i_{g,b}}{\longrightarrow} \HH_1(\Mod_{g,b}(L);\Z) \longrightarrow
\HH_1(\Sp_{2g}(\Z,L);\Z) \longrightarrow 0.$$
Setting
$$K_{g,b} = (\HH_1(\Torelli_{g,b};\Z))_{\Sp_{2g}(\Z,L)} / \Ker(i_{g,b}),$$
it is enough to prove the indicated description of $K_{g,b}$.  If $L$ is odd, then we showed
in Lemma \ref{lemma:extendtorelli} that $i_{g,b}$ is injective, so the theorem follows from
Proposition \ref{proposition:torellicoinvariants}.  If $L$ is even and $b=0$, then 
in the proofs of Theorems \ref{theorem:mzgen} and \ref{theorem:mh2} in \S \ref{section:proofsm}, we showed
that $\Ker(i_{g,b}) \cong \Z/2$.  The theorem thus follows in this case from Proposition \ref{proposition:torellicoinvariants}
and Lemma \ref{lemma:extendtorelli}.  If $L$ is even and $b=1$, then we can use the case $b=0$ and the commutative diagram
$$\begin{CD}
\HH_2(Sp_{2g}(\Z,L);\Z) @>>> (\HH_1(\Torelli_{g,1};\Z))_{\Sp_{2g}(\Z,L)} @>{i_{g,1}}>> \HH_1(\Mod_{g,1}(L);\Z) \\
@VV{=}V                      @VVV                  @VVV \\
\HH_2(\Sp_{2g}(\Z,L);\Z) @>>> (\HH_1(\Torelli_{g};\Z))_{\Sp_{2g}(\Z,L)}  @>{i_{g,0}}>> \HH_1(\Mod_{g}(L);\Z)
\end{CD}$$
arising from the naturality of the $5$-term exact sequence to deduce that $\Ker(i_{g,1})$ must be nonzero.
Just like before, the theorem now follows from Proposition \ref{proposition:torellicoinvariants}
and Lemma \ref{lemma:extendtorelli}.
\end{proof}

\noindent
Department of Mathematics\\
Rice University, MS 136 \\
6100 Main St.\\
Houston, TX 77005\\
E-mail: {\tt andyp@rice.edu}

\end{document}